\newcommand{\cmark}{\textcolor[rgb]{0,0.58,0}{\ding{51}}}%
\newcommand{\xmark}{\textcolor[rgb]{1,0,0}{\ding{55}}}
\newcounter{algorithmicH}
\let\oldalgorithmic\algorithmic
\renewcommand{\algorithmic}{
  \stepcounter{algorithmicH}
  \oldalgorithmic}
\renewcommand{\theHALG@line}{ALG@line.\thealgorithmicH.\arabic{ALG@line}}
\crefname{hypothesis}{Hypothesis}{Hypotheses}
\title{\!{}The \lowercase{mp}EDMD\! Algorithm for Data-Driven Computations of Measure-Preserving Dynamical Systems\thanks{Submitted to the editors\today.
\funding{This work was funded by a FSMP Fellowship at École Normale Supérieure.}}}
\author{Matthew J. Colbrook\thanks{DAMTP, University of Cambridge, (\email{m.colbrook@damtp.cam.ac.uk}).}}
\begin{document}

\maketitle

% REQUIRED
\begin{abstract}
Koopman operators globally linearize nonlinear dynamical systems and their spectral information is a powerful tool for the analysis and decomposition of nonlinear dynamical systems. However, Koopman operators are infinite-dimensional, and computing their spectral information is a considerable challenge. We introduce \textit{measure-preserving extended dynamic mode decomposition} (\texttt{mpEDMD}), the first truncation method whose eigendecomposition converges to the spectral quantities of Koopman operators for general measure-preserving dynamical systems. \texttt{mpEDMD} is a data-driven algorithm based on an orthogonal Procrustes problem that enforces measure-preserving truncations of Koopman operators using a general dictionary of observables. It is flexible and easy to use with any pre-existing DMD-type method, and with different types of data. We prove convergence of \texttt{mpEDMD} for projection-valued and scalar-valued spectral measures, spectra, and Koopman mode decompositions. For the case of delay embedding (Krylov subspaces), our results include the first convergence rates of the approximation of spectral measures as the size of the dictionary increases. We demonstrate \texttt{mpEDMD} on a range of challenging examples, its increased robustness to noise compared with other DMD-type methods, and its ability to capture the energy conservation and cascade of experimental measurements of a turbulent boundary layer flow with Reynolds number $> 6\times 10^4$ and state-space dimension $>10^5$.
\end{abstract}\vspace{-2mm}

% REQUIRED
\begin{keywords}
Dynamical systems, Koopman operator, Data-driven discovery, Dynamic mode decomposition, Computational spectral problem, Infinite dimensions, Structure-preserving algorithms
\end{keywords}\vspace{-2mm}

% REQUIRED
\begin{MSCcodes}
65J10, 65P99, 47N40, 47A10, 47B33, 37M10, 37A05, 37N10
%37C30
%47A62
%37A30
\end{MSCcodes}\vspace{-2mm}

\section{Introduction}
We consider dynamical systems whose state $\pmb{x}$ evolves over a state-space $\Omega\subseteq\mathbb{R}^d$ in discrete time-steps according to a function $F:\Omega \rightarrow \Omega$, i.e.,
\begin{equation}\setlength\abovedisplayskip{4pt}\setlength\belowdisplayskip{4pt}
\pmb{x}_{n+1} = F(\pmb{x}_n), \qquad n\geq 0,
\label{eq:DynamicalSystem} 
\end{equation}
for an initial condition $\pmb{x}_0\in\Omega$. We assume that \eqref{eq:DynamicalSystem} is \textit{measure-preserving} with respect to a positive measure $\omega$ on $\Omega$. %This means that $\omega(E)=\omega(\{\pmb{x}:F(\pmb{x})\in E\})$ for measurable $E\subset\Omega$.
This covers many systems of interest such as Hamiltonian flows~\cite{arnold1989mathematical}, geodesic flows~\cite{dubrovin2012modern}, Bernoulli schemes~\cite{shields1973theory}, physical systems in equilibrium~\cite{hill1986introduction}, and ergodic systems~\cite{walters2000introduction}. Moreover, many dynamical systems admit invariant measures~\cite{kryloff1937theorie} or have measure-preserving post-transient behavior~\cite{mezic2005spectral}. With the arrival of big data and machine learning, the numerical analysis of dynamical systems is currently undergoing a data-driven renaissance \cite{kantz2004nonlinear,schmidt2009distilling,brunton2019data,brunton2016discovering,burov2021kernel,zhao2016analog,guo2019data}. In many applications, the system's dynamics may be too complicated to describe analytically, or we may only have access to incomplete knowledge of its evolution. Therefore, we do not assume explicit knowledge of the function $F$. Instead, we assume that we have access to discrete-time snapshots of this system, i.e.,
\begin{equation}\setlength\abovedisplayskip{4pt}\setlength\belowdisplayskip{4pt}
\label{snapshot_data}
\{\pmb{x}^{(m)},\pmb{y}^{(m)}\}_{m=1}^M\quad \text{such that}\quad \pmb{y}^{(m)}=F(\pmb{x}^{(m)}),\quad m=1,...,M.
\end{equation}
%Additionally, the true state of the system may be difficult or even impossible to measure, making a state-space model impractical for applications such as control or prediction.
Suitable data could be collected from one long time trajectory, corresponding to $\pmb{x}^{(m)}=F^{m-1}(\pmb{x}_0)$ ($m-1$ applications of $F$), or from multiple shorter trajectories. We approximate spectral quantities of \eqref{eq:DynamicalSystem} using the data \eqref{snapshot_data}.  We do this via a new Galerkin discretization that allows us to prove convergence results and maintain the measure-preserving nature of the dynamical system. Convergence of spectral quantities is crucial for recovering the correct dynamical behavior of \eqref{eq:DynamicalSystem}, and preserving the measure is crucial for improved qualitative and long-time behavior (see~\cref{fig:turbulence2}).

%Data is becoming more abundant, and many problems, such as predicting climate change, understanding cognition from neural recordings, or understanding and controlling turbulence, are primed to take advantage of data-driven numerical analysis of dynamical systems.% \cite{}.
A popular and powerful framework for the data-driven study of dynamical systems is provided by Koopman operators. First introduced by Koopman and von Neumann in the 1930's~\cite{koopman1931hamiltonian,koopman1932dynamical}, Koopman operators allow a global linearization of~\eqref{eq:DynamicalSystem} using the space of scalar functions on $\Omega$~\cite{mezicAMS}. Their increasing popularity, known as ``Koopmanism''~\cite{budivsic2012applied}, has led to thousands of articles over the last decade~\cite{brunton2021modern}. Popular applications include
epidemiology~\cite{proctor2015discovering},
finance~\cite{mann2016dynamic}, fluid dynamics~\cite{schmid2010dynamic,rowley2009spectral,mezic2013analysis}, neuroscience~\cite{brunton2016extracting},
molecular dynamics~\cite{klus2018data,schwantes2013improvements}, %power grids~\cite{susuki2011coherent,susuki2011nonlinear},
and robotics~\cite{berger2015estimation,bruder2019modeling}.

Since~\cref{eq:DynamicalSystem} is measure-preserving, its Koopman operator, $\mathcal{K}$, is defined by
\begin{equation}\setlength\abovedisplayskip{4pt}\setlength\belowdisplayskip{4pt}
[\mathcal{K}g](\pmb{x}) = (g\circ F)(\pmb{x}), \qquad \pmb{x}\in\Omega, \qquad g\in L^2(\Omega,\omega), 
\label{eq:KoopmanOperator} 
\end{equation}
and is an isometry on the space $L^2(\Omega,\omega)$ with inner product $\langle \cdot,\cdot \rangle$ and norm $\|\cdot\|$. %For a fixed $g$, $[\mathcal{K}g](\pmb{x})$ measures the state of the dynamical system after one time-step.
The functions $g$ are also known as `observables' because they indirectly measure the state of the dynamical system. %The key property of $\mathcal{K}$ is that it is a \textit{linear} operator, regardless of whether the dynamics are linear or nonlinear. %The dynamical system~\cref{eq:DynamicalSystem} is measure-preserving if and only if $\mathcal{K}$ is an isometry on $L^2(\Omega,\omega)$.% Hence, we take $\mathcal{D}(\mathcal{K})=L^2(\Omega,\omega)$, with inner product $\langle \cdot,\cdot \rangle$, and~\eqref{eq:KoopmanOperator} holds in the $L^2$ sense.
The Koopman operator transforms the nonlinear dynamics in the state variable $\pmb{x}$ into equivalent linear dynamics in the observables $g$. Hence, the behavior of the dynamical system \eqref{eq:DynamicalSystem} is determined by the spectral information of $\mathcal{K}$ (e.g., see~\cref{KMD_rem}). Obtaining linear representations for nonlinear systems has the potential to revolutionize our ability to predict and control these systems. However, there is price to pay for this linearisation -- $\mathcal{K}$ acts on an \textit{infinite-dimensional} space. Therefore, its spectral information can be far more complicated and more difficult to compute than that of a finite matrix~\cite{webb2021spectra,colbrook2020foundations,SCI_big}. Common challenges include: computing spectral measures and continuous spectra~\cite{brunton2017chaos,lusch2018deep}; spectral pollution~\cite{williams2015data}, where discretizations cause spurious eigenvalues (and hence spurious coherent structures) to appear~\cite{lewin2010spectral}; and, in the context of this paper, preserving the isometric nature of $\mathcal{K}$. This last issue is often crucial to ensuring that approximations retain the physical properties of the original system (e.g., energy conservation). Structure-preserving algorithms have a rich history in geometric integration~\cite{3-540-30663-3} and have recently come to the fold in data-driven problems~\cite{hesthaven2022reduced,%hesthaven2021structure,
celledoni2021structure,karniadakis2021physics,greydanus2019hamiltonian,%raissi2020hidden,
hernandez2021structure}.

\begin{table}[t]
\renewcommand{\arraystretch}{1.4}
\centering\vspace{-2mm}
\begin{tabular}{|l||c|c|c|c|}
\cline{2-5}
\multicolumn{1}{c|}{}& \!{}\textbf{DMD}\!{} & \!{}\textbf{EDMD}\!{} & \!{}\textbf{piDMD}\!{} & \!{}\textbf{mpEDMD}\!{}\\ \cline{2-5}\noalign{\vskip\doublerulesep
         \vskip-\arrayrulewidth}\cline{1-5}
\!{}Aux. SVD matrices\!{}&n/a & n/a &\!$YX^*=V_1 S V_2^*$\! & \!$G^{-\frac{1}{2}}A^*G^{-\frac{1}{2}}=U_1\Sigma U_2^*$\!\\\hline
\!{}Koopman matrix\!{}& \!$(YX^{\dagger})^\top$\! & $G^{\dagger}A$ & $V_2 V_1^*$ & $G^{-\frac{1}{2}}U_2U_1^* G^{\frac{1}{2}}$\\\hline
\!{}Nonlinear dictionary\!{}&\xmark & \cmark & \xmark & \cmark\\\hline
\!{}Conv. spec. meas.\!{}& \xmark&\xmark &\xmark &\cmark\\\hline
\!{}Conv. spectra\!{}& \xmark&\xmark &\xmark &\cmark\\\hline
\!{}Conv. KMD\!{}&\xmark & \cmark & \xmark & \cmark\\\hline
\!{}Measure-preserving\!{}&\xmark &\xmark & \xmark/\cmark$^\ddagger$ &\cmark\\ \hline
\end{tabular}\renewcommand{\arraystretch}{0.98}
\label{comparison_table}
\caption{Comparisons of Galerkin discretizations discussed in this paper. $X=[\pmb{x}^{(1)} \,\,\cdots\,\, \pmb{x}^{(M)}],Y=[\pmb{y}^{(1)}\,\, \cdots\,\, \pmb{y}^{(M)} ]\in\mathbb{C}^{d\times {M}}$ are matrices of the snapshots (linear dictionary) and it is common to combine DMD with a truncated SVD (see~\cref{sec:EDMD_recap}). $G=\Psi_X^*W\Psi_X$ and $A=\Psi_X^*W\Psi_Y$, where $\Psi_X$, $\Psi_Y$ are given in~\eqref{psidef} and $W=\mathrm{diag}(w_1,\ldots,w_M)$ is a diagonal matrix of quadrature weights.\\$^\ddagger$Note: {piDMD is measure-preserving only if $XX^*$ and $W$ are multiples of the identity, $I$.}}\vspace{-8mm}
\end{table}

Most existing approaches to approximate $\mathcal{K}$ and its spectral properties are based on dynamic mode decomposition (DMD)~\cite{schmid2010dynamic,rowley2009spectral,tu2014dynamic,kutz2016dynamic} %Some exceptions for ergodic systems\footnote{Note that we do not assume in this paper that the system is ergodic.} include integral operator compact regularizations \cite{das2021reproducing}, Christoffel--Darboux kernels for approximating power spectra~\cite{korda2020data}, periodic approximations of $\mathcal{K}$ on tori via a partitioning of the state-space~\cite{govindarajan2019approximation}, and Welch's method~\cite{arbabi2017study,welch1967use}.
%DMD was first introduced in~\cite{schmid2010dynamic,schmid2009dynamic} and connected to the Koopman operator in~\cite{rowley2009spectral}.
or its variants~\cite{chen2012variants,jovanovic2014sparsity,proctor2016dynamic,colbrook2021rigorous}. DMD approximates $\mathcal{K}$ via a best-fit linear model of~\cref{eq:DynamicalSystem} that advances spatial measurements from one time step to the next. However, DMD is based on linear observables, which are not rich enough for many nonlinear systems. To overcome this,~\cite{williams2015data} introduced extended DMD (EDMD), a Galerkin approximation of $\mathcal{K}$ acting on a dictionary of (nonlinear) observables (see~\cref{sec:EDMD_recap}). As the number of snapshots, $M$, increases, the eigenvalues computed by EDMD correspond to the so-called finite section method~\cite{bottcher1983finite}. Since the finite section method can suffer from spectral pollution, %\cite{},
spectral pollution is also a major concern for EDMD~\cite{williams2015data}. Moreover, as the dictionary becomes richer, the spectral measures of EDMD do not typically converge weakly to that of $\mathcal{K}$ (see~\cref{sec:shift_example} for a generic example). Finally, although the Koopman mode decomposition (KMD) provided by EDMD converges in an appropriate sense (in contrast to DMD), it is not measure-preserving, and this is of serious concern in many applications (see \cref{sec:turbulent_flow} for a real-world example). 

Recently, Baddoo and co-authors~\cite{baddoo2021physics} introduced physics-informed DMD (piDMD), which enforces symmetry constraints on the DMD approximation. For conservative systems, piDMD enforces the DMD matrix to be orthogonal. However, piDMD uses linear observables and implicitly assumes that these are orthonormal in $L^2(\Omega,\omega)$, which does not hold (or may not even be possible after reweighting) for many dynamical systems. Moreover, no convergence results are known for piDMD.

Motivated by the relative advantages of EDMD and piDMD, we introduce a new approximation of $\mathcal{K}$ that is measure-preserving and that converges to the correct spectral information. Our method uses an orthogonal Procrustes problem using \textit{general dictionaries} and \textit{nonlinear measurements}, and we call our algorithm measure-preserving EDMD (\texttt{mpEDMD}).~\cref{comparison_table} compares DMD, EDMD, piDMD and \texttt{mpEDMD}, and highlights some of the benefits of \texttt{mpEDMD}. It is precisely the fact that \texttt{mpEDMD} corresponds to a normal truncation that allows convergence results (e.g., see proof of~\cref{thm_weak_conv}). Our contributions include:
\begin{itemize}[leftmargin=0.4cm]
	\item We introduce \texttt{mpEDMD} to deal with generic measure-preserving systems.\footnote{For example, we do not assume in this paper that the system is ergodic.} \texttt{mpEDMD} is simple and easy to use with any pre-existing DMD-type method, it is measure-preserving, and it can be used with a range of different data structures and acquisition methods (e.g., single trajectories or multiple trajectories).
	\item We prove convergence of \texttt{mpEDMD} for various spectral quantities of interest, summarized in~\cref{approx_table}. Our results include weak convergence\footnote{This means convergence after integrating against a Lipschitz continuous test function on the unit circle (where the measures are supported)~\cite[Ch.~1]{billingsley2013convergence}. The computation of spectral measures poses a serious numerical challenge~\cite{colbrook2020} and can only ever be done in this weak sense~\cite{colbrook2019computing}. For example, the spectral type of $\mathcal{K}$ is well-known to be sensitive to arbitrarily small perturbations.} of projection-valued and scalar-valued spectral measures, convergence of spectra (including spectral inclusion and ways to deal with spectral pollution) and convergence of KMDs in $L^2(\Omega,\omega)$.	\texttt{mpEDMD} is the first truncation method whose eigendecomposition converges to these spectral quantities for general measure-preserving dynamical systems. \cref{delay} is the first result in the literature on convergence rates of the approximation of spectral measures as the size of the dictionary increases.
	\item We demonstrate our convergence results and the use of \texttt{mpEDMD} on several examples, including numerically simulated data and experimental data. These examples also demonstrate the increased robustness of \texttt{mpEDMD} to noise compared with other DMD-type methods, and the ability to deal with difficult problems such as capturing the energy conservation and statistics of a turbulent boundary layer flow.%These results open the door to future extensions to more general structure-preserving methods for Koopman operators and data-driven dynamical systems.
\end{itemize}
\vspace{1mm}

\begin{table}[t]
\renewcommand{\arraystretch}{1.4}
\centering\vspace{-2mm}
\begin{tabular}{|l|l|l|}
\hline
\multicolumn{1}{|c|}{\!\textbf{Spectral quantity}\!}& \multicolumn{1}{|c|}{\!\textbf{Approximation}\!}& \multicolumn{1}{|c|}{\!\textbf{Convergence results}\!} \\ \hline\hline
\! Spec. measure $\mathcal{E}$\!
& \!$\mathcal{E}_{N,M}\!=\!\sum_{j=1}^{N}v_jv_j^*G\delta_{\lambda_j}$\!
&  \! Thm. \ref{thm_weak_conv} and \ref{thm_weak_conv2}\!                            \\ \hline
\! Spec. measures $\mu_g$\!
& \!$\smash{\mu_{\pmb{g}}^{(N,M)}}\!\!=\!\sum_{j=1}^N|v_j^*G\pmb{g}|^2\delta_{\lambda_j}$\!\! &\! Thm. \ref{thm_weak_conv_scalar}, Cor. \ref{mug_conv} and \ref{delay}\!                              \\ \hline
\! Approx. pt. spec. \!$\sigma_{\mathrm{ap}}(\mathcal{K})$\!\!
& \!$\{\lambda_1,\ldots,\lambda_N\}$\!
& \! Thm.~\ref{lemma_spectra_conv} and Eq.~\cref{residual_bound} \!                             \\ \hline
\! Koop. mode decomp. \!&
\!{}Eq.~\cref{LIP_pred} for
$g(\pmb{x}_n).$\! &
\! Lemma \ref{lemma_SOT_conv} and Rem. \ref{KMD_rem}\!\\ \hline
\end{tabular}\renewcommand{\arraystretch}{0.98}
\label{approx_table}
\caption{Lookup table of the approximated spectral quantities using \texttt{mpEDMD} (\cref{alg:mp_EDMD}) and the relevant convergence results of this paper. The vectors $\smash{\{v_j\}_{j=1}^N}$ denote the eigenvectors of $\mathbb{K}$ with corresponding eigenvalues $\smash{\{\lambda_j\}_{j=1}^N}$, and $\delta_{\lambda_j}$ denotes a Dirac delta distribution centered at $\lambda_j$.}\vspace{-8mm}
\end{table}

The remainder of the paper is organized as follows. In~\cref{sec:math_prelims} we introduce various concepts and notation, and motivate the computation of spectral properties of $\mathcal{K}$.~\Cref{sec:EDMD_recap} recalls the basics of EDMD. In~\cref{sec:mpEDMDD} we introduce \texttt{mpEDMD} and prove its convergence properties in~\cref{sec:conv_theory}. A range of numerical examples are presented in~\cref{sec:numerical_examples} and we conclude in~\cref{sec:conclusion}. General purpose code for \texttt{mpEDMD} and the examples of this paper can be found at \textcolor[rgb]{0,0,1}{\url{https://github.com/MColbrook/Measure-preserving-Extended-Dynamic-Mode-Decomposition}}.

\section{Mathematical preliminaries}\label{sec:math_prelims}

Here we provide the background material on spectral measures and approximate point spectra needed to understand later sections.

\subsection{Spectral measures and unitary extensions of $\mathcal{K}$}\label{sec:spec_meas_def_jkhkhjk}
Spectral measures provide a way of diagonalizing normal operators. However, a Koopman operator that is an isometry does not necessarily commute with its adjoint - a famous example is the Koopman operator of the tent map. Despite this, a Koopman operator $\mathcal{K}:L^2(\Omega,\omega)\rightarrow L^2(\Omega,\omega)$ of a measure-preserving dynamical system has a unitary extension $\mathcal{K}'$ defined on an extended Hilbert space $\mathcal{H}'$ with $L^2(\Omega,\omega)\subset\mathcal{H}'$ \cite[Chapter I]{nagy2010harmonic}. Such an extension is not unique, but it still allows us to understand the spectral information of $\mathcal{K}$ by considering $\mathcal{K}'$, which is a normal operator. After projecting back onto $L^2(\Omega,\omega)$, the measure is independent of the extension (\cref{spec_meas_indep}). If $F$ is invertible and measure-preserving, $\mathcal{K}$ is unitary and we can simply take $\mathcal{K}'=\mathcal{K}$ and $\mathcal{H}'=L^2(\Omega,\omega)$.

The spectral theorem for a normal matrix $B\in \mathbb{C}^{n\times n}$, i.e., $B^*B = BB^*$, states that there exists an orthonormal basis of eigenvectors $v_1,\dots,v_n$ for $\mathbb{C}^n$ such that
\begin{equation}\setlength\abovedisplayskip{4pt}\setlength\belowdisplayskip{4pt}\label{eqn:disc_decomp}
v = \left(\sum_{k=1}^n v_kv_k^*\right)v, \quad v\in\mathbb{C}^n \qquad\text{and}\qquad Bv = \left(\sum_{k=1}^n\lambda_k v_kv_k^*\right)v, \quad v\in\mathbb{C}^n,
\end{equation}
where $\lambda_1,\ldots,\lambda_n$ are eigenvalues of $B$, i.e., $Bv_k = \lambda_kv_k$ for $1\leq k\leq n$. In other words, the projections $v_kv_k^*$ simultaneously decompose the space $\mathbb{C}^n$ and diagonalize the operator $B$. This intuition carries over to the infinite-dimensional setting of this paper, by replacing $v\in\mathbb{C}^n$ by $f\in\mathcal{H}'$, and $B$ by a normal operator $\mathcal{K}'$. However, if $\mathcal{K}'$ has non-empty continuous spectrum, then the eigenvectors of $\mathcal{K}'$ do not form a basis for $\mathcal{H}'$ or diagonalize $\mathcal{K}'$. Instead, the spectral theorem for normal operators states that the projections $v_kv_k^*$ in~\eqref{eqn:disc_decomp} can be replaced by a projection-valued measure $\mathcal{E}'$ supported on the spectrum of $\mathcal{K}'$~\cite[Thm.~VIII.6]{reed1972methods}. In our setting, $\mathcal{K}'$ is unitary and hence its spectrum is contained inside the unit circle $\mathbb{T}$. The measure $\mathcal{E}'$ assigns an orthogonal projector to each Borel measurable subset of $\mathbb{T}$ such that
$$\setlength\abovedisplayskip{4pt}\setlength\belowdisplayskip{4pt}
f=\left(\int_\mathbb{T} d\mathcal{E}'(\lambda)\right)f \qquad\text{and}\qquad \mathcal{K}'f=\left(\int_\mathbb{T} \lambda\,d\mathcal{E}'(\lambda)\right)f, \qquad f\in\mathcal{H}'.
$$
Analogous to~\eqref{eqn:disc_decomp}, $\mathcal{E}'$ decomposes $\mathcal{H}'$ and diagonalizes the operator $\mathcal{K}'$. For example, if $U\subset\mathbb{T}$ contains only discrete eigenvalues of $\mathcal{K}'$ and no other types of spectra, then $\mathcal{E}'(U)$ is simply the spectral projector onto the invariant subspace spanned by the corresponding eigenfunctions. More generally, $\mathcal{E}'$ decomposes elements of $\mathcal{H}'$ along the discrete and continuous spectrum of $\mathcal{K}'$~[Section 2]\cite{colbrook2020}.

\begin{proposition}
\label{spec_meas_indep}
Let $\mathcal{P}$ denote the orthogonal projection from $\mathcal{H}'$ to $L^2(\Omega,\omega)$ and define $\mathcal{E}=P\mathcal{E}'P^*$. Then $\mathcal{E}$ is independent of the choice of unitary extension.
\end{proposition}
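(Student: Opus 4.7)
My strategy is to reduce the claim to a statement about complex scalar measures on $\mathbb{T}$, where uniqueness from moments is classical. An operator-valued measure is determined by its weak (sesquilinear) form, so it suffices to show that
\[
\mu_{g,h}(U) := \langle \mathcal{E}(U) g, h\rangle = \langle \mathcal{E}'(U) g, h\rangle_{\mathcal{H}'}, \qquad g,h\in L^2(\Omega,\omega),
\]
is independent of the choice of unitary extension $(\mathcal{K}',\mathcal{H}')$, where on the right we view $g,h$ as elements of $\mathcal{H}'$ via the isometric embedding $\mathcal{P}^*$. Each $\mu_{g,h}$ is a finite complex Borel measure supported on $\mathbb{T}$, and a classical Fourier-uniqueness argument (density of Laurent polynomials $\mathrm{span}\{\lambda^n:n\in\mathbb{Z}\}$ in $C(\mathbb{T})$ by Stone--Weierstrass) shows that such a measure is uniquely determined by the collection of its moments $\int_\mathbb{T} \lambda^n\,d\mu_{g,h}(\lambda)$ for $n\in\mathbb{Z}$. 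So my task reduces to computing these moments intrinsically in terms of $\mathcal{K}$.

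\textbf{Computing the moments.} By the functional calculus associated with the spectral theorem for the unitary $\mathcal{K}'$, we have $\int_\mathbb{T} \lambda^n\,d\mathcal{E}'(\lambda) = (\mathcal{K}')^n$ for every $n\in\mathbb{Z}$, hence
\[
\int_\mathbb{T} \lambda^n\,d\mu_{g,h}(\lambda) = \langle (\mathcal{K}')^n g, h\rangle_{\mathcal{H}'}.
\]
Because $\mathcal{K}'$ extends $\mathcal{K}$ and $L^2(\Omega,\omega)$ is $\mathcal{K}'$-invariant, for $n\geq 0$ and $g\in L^2(\Omega,\omega)$ one has $(\mathcal{K}')^n g = \mathcal{K}^n g$, which lies in $L^2(\Omega,\omega)$, so $\langle(\mathcal{K}')^n g,h\rangle_{\mathcal{H}'} = \langle\mathcal{K}^n g,h\rangle$ depends only on $\mathcal{K}$. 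For $n<0$, I use unitarity of $\mathcal{K}'$ to move the power to the other side:
\[
\langle (\mathcal{K}')^n g, h\rangle_{\mathcal{H}'} = \langle g, (\mathcal{K}')^{-n} h\rangle_{\mathcal{H}'} = \langle g, \mathcal{K}^{-n} h\rangle,
\]
again independent of the extension. Thus every moment of $\mu_{g,h}$ is extension-free, whence $\mu_{g,h}$ itself is, and polarization over $g,h$ yields independence of $\mathcal{E}$.

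\textbf{Main subtlety.} The only non-routine point is being careful about which subspace is invariant: $\mathcal{K}'$ extends $\mathcal{K}$ in the Sz.-Nagy sense, so $L^2(\Omega,\omega)$ is invariant under the \emph{forward} dynamics $\mathcal{K}'$ but generally not under its adjoint $(\mathcal{K}')^*$. This is precisely why negative powers must be handled by duality rather than by direct restriction, and why the argument genuinely uses unitarity (not merely isometry) of $\mathcal{K}'$. Once this is in place, the proof is a short line-by-line verification; I would write it out in two displays, one for $n\geq 0$ and one for $n<0$, followed by the moment-uniqueness conclusion.
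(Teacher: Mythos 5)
Your proposal is correct and follows essentially the same route as the paper's proof: reduce to the scalar measures $\mu_{g,h}$, compute their moments as $\langle\mathcal{K}^n g,h\rangle$ for $n\geq 0$ and $\langle g,\mathcal{K}^{-n}h\rangle$ for $n<0$ via unitarity, and conclude by moment determinacy on $\mathbb{T}$. Your added remarks (Stone--Weierstrass justification and the observation that $L^2(\Omega,\omega)$ is invariant under $\mathcal{K}'$ but not its adjoint, forcing the duality step) simply make explicit what the paper leaves implicit.
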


\begin{proof}
For any $g,h\in L^2(\Omega,\omega)$ and Borel measurable set $U\subset\mathbb{T}$, $
\langle \mathcal{E}(U)g,h\rangle=\langle \mathcal{E}'(U)g,h\rangle_{\mathcal{H}'}.$ Hence, it is enough to show that the scalar-valued measures $\mu_{g,h}$, $\mu_{g,h}(U):=\langle\mathcal{E}'(U)g,h\rangle_{\mathcal{H}'}$, are independent of the choice of $\mathcal{K}'$. For $n\in\mathbb{Z}$,
$$\setlength\abovedisplayskip{4pt}\setlength\belowdisplayskip{4pt}
\int_{\mathbb{T}}\lambda^n \,d\mu_{g,h}(\lambda)=\langle (\mathcal{K}')^ng,h \rangle_{\mathcal{H}'}=\begin{cases}
\langle \mathcal{K}^ng,h \rangle,&\quad \text{if }n\geq 0,\\
\langle g,\mathcal{K}^{-n}h \rangle,&\quad \text{otherwise}.
\end{cases}
$$
Since $\mu_{g,h}$ is determined by these moments, the result follows.
\end{proof}

\cref{spec_meas_indep} shows that the choice of unitary extension is immaterial. Henceforth, we dispense with the extension $\mathcal{K}'$, and call $\mathcal{E}$ the spectral measure of $\mathcal{K}$. %\footnote{Strictly speaking, $\mathcal{E}$ is not a spectral measure unless $\mathcal{K}$ is unitary. However, as discussed below, it still characterizes the forward-time dynamics of~\eqref{eq:DynamicalSystem}.}
The approximation of $\mathcal{E}$ plays a critical role in many applications. For example, in model reduction, the approximate spectral projections provide a low order model \cite{mezic2004comparison,mezic2005spectral}. A related example is the KMD in~\cref{KMD_rem}. Furthermore, the decomposition of $\mathcal{E}$ into atomic and continuous parts often characterizes a dynamical system. For example, suppose $F$ is measure-preserving and bijective, and $\omega$ is a probability measure. Then, the dynamical system is: (1) ergodic if and only if $\lambda=1$ is a simple eigenvalue of $\mathcal{K}$, (2) weakly mixing if and only if $\lambda=1$ is a simple eigenvalue of $\mathcal{K}$ and there are no other eigenvalues, and (3) mixing if $\lambda=1$ is a simple eigenvalue of $\mathcal{K}$ and $\mathcal{K}$ has absolutely continuous spectrum on $\mathrm{span}\{1\}^\perp$~\cite{halmos2017lectures}. Different spectral types also have interpretations in various applications such as fluid mechanics~\cite{mezic2013analysis}, anomalous transport~\cite{zaslavsky2002chaos}, and the analysis of invariants/exponents of trajectories~\cite{kantz2004nonlinear}.

Given an observable $g\in L^2(\Omega,\omega)$ of interest that is normalized to have $\|g\| = 1$, the spectral measure of $\mathcal{K}$ with respect to $g$ is a probability measure defined as $\mu_{g}(U):=\langle\mathcal{E}(U)g,g\rangle$, where $U\subset\mathbb{T}$ is a Borel measurable set~\cite{reed1972methods}. The proof of~\cref{spec_meas_indep} shows that the moments of the measure $\mu_g$ are the correlations $\langle \mathcal{K}^n g,g\rangle$ and $\langle g,\mathcal{K}^n g\rangle$ for $n\in\mathbb{Z}_{\geq 0}$. For example, if our system corresponds to the dynamics on an attractor, these statistical properties allow comparison of complex dynamics~\cite{mezic2004comparison}. More generally, the spectral measure of $\mathcal{K}$ with respect to almost every $g\in L^2(\Omega,\omega)$ is a signature for the forward-time dynamics of~\eqref{eq:DynamicalSystem}. This is because $\mu_g$ completely determines $\mathcal{K}$ when $g$ is cyclic, i.e., when the closure of $\mathrm{span}\{g,\mathcal{K}g,\mathcal{K}^2g,\ldots\}$ is $L^2(\Omega,\omega)$, and almost every $g$ is cyclic. If $g$ is not cyclic, then $\mu_g$ only determines the action of $\mathcal{K}$ on the closure of $\mathrm{span}\{g,\mathcal{K}g,\mathcal{K}^2g,\dots\}$, which can still be useful if one is interested in particular observables. The choice of $g$ is up to the practitioner and application.

\subsection{Approximate point spectra} Since $\mathcal{K}$ is an isometry, any eigenvalue of $\mathcal{K}$ must lie in $\mathbb{T}$. The approximate point spectrum generalizes the notion of eigenvalues,
$$\setlength\abovedisplayskip{4pt}\setlength\belowdisplayskip{4pt}
\sigma_{\mathrm{ap}}(\mathcal{K})=\{\lambda\in\mathbb{C}:\exists\{g_n\}\subset L^2(\Omega,\omega)\text{ such that }\|g_n\|=1,\lim_{n\rightarrow\infty}\|(\mathcal{K}-\lambda)g_n\|=0\}.
$$
We can approximate $\sigma_{\mathrm{ap}}(\mathcal{K})$ using the eigenvalues computed by~\cref{alg:mp_EDMD}. If $\mathcal{K}$ is unitary, then $\sigma_{\mathrm{ap}}(\mathcal{K})=\sigma(\mathcal{K})\subset\mathbb{T}$. Otherwise, $\sigma_{\mathrm{ap}}(\mathcal{K})=\mathbb{T}$ and $\sigma(\mathcal{K})$ is the closed unit disc in $\mathbb{C}$. Any observable $g$ with $\|g\|=1$ and $\lambda\in\mathbb{C}$ such that $\|(\mathcal{K}-\lambda)g\|\leq\epsilon$ is known as ($\epsilon$-)approximate eigenfunction. Such observables are important for the dynamical system~\cref{eq:DynamicalSystem} since $\mathcal{K}^ng=\lambda^n g+\mathcal{O}(n\epsilon)$. In other words, $\lambda$ describes the coherent oscillation and decay/growth of the observable $g$ with time. We can verify and compute ($\epsilon$-)approximate eigenfunctions by approximating residuals in~\eqref{residual_bound}.\footnote{Readers familiar with the notion of pseudospectra %~\cite{trefethen2005spectra}
will recognize the notion of approximate eigenfunctions. Pseudospectra are needed for generic non-normal Koopman operators since the transient behavior of the system can differ greatly from the behavior at large times. %Pseudospectra can be used to detect and quantify transients that are not captured by the spectrum.
However, in our case, the Koopman operator is an isometry and hence $\sigma_\epsilon(\mathcal{K})=\{\lambda\in\mathbb{C}:\mathrm{dist}(\lambda,\sigma(\mathcal{K}))\leq \epsilon\}$ so that pseudospectra are not needed. In particular, the spectrum is stable to perturbations.}

The approximate eigenfunctions and $\sigma_{\mathrm{ap}}(\mathcal{K})$ encode lots of information about the underlying dynamical system~\eqref{eq:DynamicalSystem}~\cite{mezicAMS}. For example, the level sets of certain eigenfunctions determine the invariant manifolds~\cite{mezic2015applications} (e.g.,~\cref{fig:pendulum1}) and isostables~\cite{mauroy2013isostables}, and the global stability of equilibria and ergodic partitions can be characterized by approximate eigenfunctions and $\sigma_{\mathrm{ap}}(\mathcal{K})$~\cite{mauroy2016global,budivsic2012applied}.

\section{Extended Dynamic Mode Decomposition}
\label{sec:EDMD_recap}
Given a dictionary of functions $\{\psi_1,\ldots,\psi_{N}\}\subset L^2(\Omega,\omega)$, EDMD~\cite{williams2015data} constructs a matrix $\mathbb{K}_{\mathrm{EDMD}}\in\mathbb{C}^{N\times N}$ from the snapshot data \eqref{snapshot_data} that approximates the action of $\mathcal{K}$ on the finite-dimensional subspace ${V}_{{N}}=\mathrm{span}\{\psi_1,\ldots,\psi_{N}\}$. The choice of the dictionary is up to the user, with some common hand-crafted choices given in~\cite[Table 1]{williams2015data}. When the state-space dimension $d$ is large, it is beneficial to use a data-driven choice of dictionary \cite{kutz2016dynamic,williams2015kernel}, which can be verified aposteri to capture the relevant dynamics via residual techniques \cite{colbrook2021rigorous}. We define the vector-valued function or ``quasimatrix'' $\Psi$ via
$$\setlength\abovedisplayskip{4pt}\setlength\belowdisplayskip{4pt}
\Psi(\pmb{x})=\begin{bmatrix}\psi_1(\pmb{x}) & \cdots& \psi_{{N}}(\pmb{x}) \end{bmatrix}\in\mathbb{C}^{1\times {N}}.
$$
Any function $g\in V_{N}$ can then be written as $g(\pmb{x})=\sum_{j=1}^{N}\psi_j(\pmb{x})g_j=\Psi(\pmb{x})\,\pmb{g}$ for some vector of constant coefficients $\pmb{g}\in\mathbb{C}^{N}$. It follows from \eqref{eq:KoopmanOperator} that
$$\setlength\abovedisplayskip{4pt}\setlength\belowdisplayskip{4pt}
[\mathcal{K}g](\pmb{x})=\Psi(\pmb{x})(\mathbb{K}_{\mathrm{EDMD}}\,\pmb{g})+R(\pmb{g},\pmb{x}),\quad R(\pmb{g},\pmb{x}):={\Psi(F(\pmb{x}))\,\pmb{g}-\Psi(\pmb{x})(\mathbb{K}_{\mathrm{EDMD}}\,\pmb{g})}.
$$
Typically, the subspace $V_{N}$ generated by the dictionary is not an invariant subspace of $\mathcal{K}$, and hence there is no choice of $\mathbb{K}_{\mathrm{EDMD}}$ that makes the error $R(\pmb{g},\pmb{x})$ zero for all choices of $g\in V_N$ and $\pmb{x}\in\Omega$. Instead, it is natural to select $\mathbb{K}_{\mathrm{EDMD}}$ as a solution of
\begin{equation}\setlength\abovedisplayskip{4pt}\setlength\belowdisplayskip{4pt}
\underset{B\in\mathbb{C}^{N\times N}}{\mathrm{argmin}} \left\{\int_\Omega \max_{\|\pmb{g}\|_2=1}|R(\pmb{g},\pmb{x})|^2\,d\omega(\pmb{x})=\int_\Omega \left\|\Psi(F(\pmb{x})) - \Psi(\pmb{x})B\right\|_2^2\,d\omega(\pmb{x})\right\}.
\label{eq:ContinuousLeastSquaresProblem}
\end{equation} 
Here, $\|\cdot\|_2$ denotes the standard Euclidean norm of a vector. Given a finite amount of snapshot data, we cannot directly evaluate the integral in~\eqref{eq:ContinuousLeastSquaresProblem}. Instead, we approximate it via a quadrature rule by treating the data points $\{\pmb{x}^{(m)}\}_{m=1}^{M}$ as quadrature nodes with weights $\{w_m\}_{m=1}^{M}$. Note that in the original definition of EDMD, $\omega$ is a probability measure and the quadrature weights are $w_m=1/M$. General weights are an important consideration when we sample according to a measure different to $\omega$ or if we are free to chose $\{\pmb{x}^{(m)}\}_{m=1}^{M}$ according to a high-order quadrature rule. The discretized version of~\eqref{eq:ContinuousLeastSquaresProblem} is
\begin{equation}\setlength\abovedisplayskip{4pt}\setlength\belowdisplayskip{4pt}
\label{EDMD_opt_prob2}
\mathbb{K}_{\mathrm{EDMD}}\in\underset{B\in\mathbb{C}^{N\times N}}{\mathrm{argmin}}\sum_{m=1}^{M} w_m\left\|\Psi(\pmb{y}^{(m)})-\Psi(\pmb{x}^{(m)})B\right\|_2^2.
\end{equation}
For notational convenience, we define the following two matrices
\begin{equation}\setlength\abovedisplayskip{4pt}\setlength\belowdisplayskip{4pt}
\begin{split}
\Psi_X=\begin{pmatrix}
\Psi(\pmb{x}^{(1)})\\
\vdots\\
\Psi(\pmb{x}^{(M)})
\end{pmatrix}\in\mathbb{C}^{M\times N},\quad
\Psi_Y=\begin{pmatrix}
\Psi(\pmb{y}^{(1)})\\
\vdots \\
\Psi(\pmb{y}^{(M)})
\end{pmatrix}\in\mathbb{C}^{M\times N},
\label{psidef}
\end{split}
\end{equation}
and let $W=\mathrm{diag}(w_1,\ldots,w_M)$ be the diagonal weight matrix of the quadrature rule. We define the Gram-matrix $G=\Psi_X^*W\Psi_X$ and the matrix $A=\Psi_X^*W\Psi_Y$. Letting `$\dagger$' denote the pseudoinverse, a solution to~\eqref{EDMD_opt_prob2} is
$$\setlength\abovedisplayskip{4pt}\setlength\belowdisplayskip{4pt}
\mathbb{K}_{\mathrm{EDMD}}=G^{\dagger}A=(\Psi_X^*W\Psi_X)^{\dagger}(\Psi_X^*W\Psi_Y)=(\sqrt{W}\Psi_X)^\dagger\sqrt{W}\Psi_Y.
$$
In some applications, the matrix $G$ may be ill-conditioned and it is common to consider truncated singular value decompositions or other forms of regularization. For simplicity, we assume throughout the paper that $G$ is invertible.

%EDMD can be interpreted as a Galerkin approximation. Note that
%$$\setlength\abovedisplayskip{4pt}\setlength\belowdisplayskip{4pt}
%G= \sum_{m=1}^{M} w_m \Psi(\pmb{x}^{(m)})^*\Psi(\pmb{x}^{(m)}),\quad A= \sum_{m=1}^{M} w_m \Psi(\pmb{x}^{(m)})^*\Psi(\pmb{y}^{(m)}).
%$$
If the quadrature approximation converges, then% it follows that
\begin{equation}\setlength\abovedisplayskip{4pt}\setlength\belowdisplayskip{4pt}
\label{quad_convergence}
\lim_{M\rightarrow\infty}G_{jk} = \langle \psi_k,\psi_j \rangle\quad \text{ and }\quad \lim_{M\rightarrow\infty}A_{jk} = \langle \mathcal{K}\psi_k,\psi_j \rangle.
\end{equation}
Let $\mathcal{P}_{V_{N}}$ denote the orthogonal projection onto $V_{N}$. As $M\rightarrow \infty$, the convergence in~\eqref{quad_convergence} means that $\mathbb{K}_{\mathrm{EDMD}}$ approaches a matrix representation of $\mathcal{P}_{V_{N}}\mathcal{K}\mathcal{P}_{V_{N}}^*$. Thus, EDMD is a Galerkin method in the large data limit $M\rightarrow \infty$. As a special case, if $\psi_j(\pmb{x})=e_j^*\pmb{x}$ for $j=1,\ldots,d=N$ and $w_m=1/M$, then $\mathbb{K}_{\mathrm{EDMD}}=(\sqrt{W}\Psi_X)^\dagger\sqrt{W}\Psi_Y$. In this case, $\mathbb{K}_{\mathrm{EDMD}}$ is the transpose of the usual DMD matrix,% i.e.,
$$\setlength\abovedisplayskip{4pt}\setlength\belowdisplayskip{4pt}
\mathbb{K}_{\mathrm{DMD}}=\Psi_Y^\top\Psi_X^{\top\dagger}=\Psi_Y^\top\sqrt{W}(\Psi_X^\top\sqrt{W})^\dagger=((\sqrt{W}\Psi_X)^\dagger\sqrt{W}\Psi_Y)^\top=\mathbb{K}_{\mathrm{EDMD}}^\top.
$$
Thus, DMD can be interpreted as producing a Galerkin approximation of the Koopman operator using the set of linear monomials as basis functions. When $d$ is large, it is common to  form a low-rank approximation of
$\sqrt{W}\Psi_X$ via a truncated SVD \cite{kutz2016dynamic}.

There are typically three scenarios for which the convergence in \eqref{quad_convergence} holds:

\vspace{2mm}

\begin{itemize}
	\item[(i)] {\textbf{Random sampling:}} In the initial definition of EDMD, $\omega$ is a probability measure and $\{\pmb{x}^{(m)}\}_{m=1}^M$ are drawn independently according to $\omega$ with the quadrature weights $w_m=1/M$. The strong law of large numbers shows that \eqref{quad_convergence} holds with probability one~\cite[Section 3.4]{2158-2491_2016_1_51}, provided that $\omega$ is not supported on a zero level set that is a linear combination of the dictionary~\cite[Section 4]{korda2018convergence}. Convergence is typically at a Monte Carlo rate of $\mathcal{O}(M^{-1/2})$~\cite{caflisch1998monte}.\vspace{1mm}
	\item[(ii)] {\textbf{High-order quadrature:}} If the dictionary and $F$ are sufficiently regular and we are free to choose the $\{\pmb{x}^{(m)}\}_{m=1}^{M}$, then it is beneficial to select $\{\pmb{x}^{(m)}\}_{m=1}^{M}$ as an $M$-point quadrature rule with weights $\{w_m\}_{m=1}^{M}$. This can lead to much faster convergence rates in~\eqref{quad_convergence}~\cite{colbrook2021rigorous}, but can be difficult if $d$ is large.\vspace{1mm}
	\item[(iii)] {\textbf{Ergodic sampling:}} For a single fixed initial condition $\pmb{x}_0$ and $\pmb{x}^{(m)}=F^{m-1}(\pmb{x}_0)$ (i.e., data collected along one trajectory), if the dynamical system is ergodic, then one can use Birkhoff's Ergodic Theorem to show \eqref{quad_convergence}~\cite{korda2018convergence}. One chooses $w_m=1/M$ but the convergence rate is problem dependent~\cite{kachurovskii1996rate}.
\end{itemize}

\vspace{1mm}

If one is entirely free to select the initial conditions of the trajectory data, and $d$ is not too large, then we recommend picking them based on a high-order quadrature rule. Random and ergodic sampling have the advantage of being practical even when $d$ is large. Ergodic sampling is particularly useful when we have access to only one trajectory of the dynamical system. Ergodic sampling does not require knowledge of $\omega$ (e.g., if one wishes to study the dynamics near attractors).

\section{Measure-preserving EDMD}
\label{sec:mpEDMDD}
We now seek a matrix $\mathbb{K}\in\mathbb{C}^{N\times N}$ that approximates the action of $\mathcal{K}$ on the finite-dimensional subspace ${V}_{{N}}$, and, in addition, corresponds to a unitary operator on ${V}_{{N}}$. Given the Gram matrix $G=\Psi_X^*W\Psi_X$, we can approximate inner products via $\langle \Psi \pmb{g},\Psi \pmb{h} \rangle\approx \pmb{h}^*G\pmb{g}.$ If~\eqref{quad_convergence} holds, then this approximation converges to the inner product as $M\rightarrow\infty$. Similarly, $\|\Psi \pmb{g}\|^2\approx \pmb{g}^*G\pmb{g}=\|G^{1/2}\pmb{g}\|_2$ and $\|\Psi \mathbb{K}\,\pmb{g}\|^2\approx \pmb{g}^*\mathbb{K}^*G\mathbb{K}\pmb{g}.$ Since $\mathcal{K}$ is an isometry, it is natural to enforce
$$\setlength\abovedisplayskip{4pt}\setlength\belowdisplayskip{4pt}
\pmb{g}^*G\pmb{g}=\pmb{g}^*\mathbb{K}^*G\mathbb{K}\pmb{g},\quad \forall \pmb{g}\in\mathbb{C}^N.
$$
This holds if and only if $\mathbb{K}^*G\mathbb{K}=G$. Therefore, we replace \eqref{eq:ContinuousLeastSquaresProblem} by the problem
$$\setlength\abovedisplayskip{4pt}\setlength\belowdisplayskip{4pt}
\underset{\underset{B^*GB=G}{B\in\mathbb{C}^{N\times N}}}{\mathrm{argmin}} \left\{\int_\Omega \max_{\|G^{\frac{1}{2}}\pmb{g}\|_2=1}\!\!\!|R(\pmb{g},\pmb{x})|^2\,d\omega(\pmb{x})\!=\!\int_\Omega \left\|\Psi(F(\pmb{x}))G^{-\frac{1}{2}} - \Psi(\pmb{x})BG^{-\frac{1}{2}}\right\|^2_{2}\,d\omega(\pmb{x})\right\}.
$$
After applying the quadrature rule, the discretized version of this problem is
\begin{equation}\setlength\abovedisplayskip{4pt}\setlength\belowdisplayskip{4pt}
\label{EDMD_opt_prob3}
\underset{\underset{B^*GB=G}{B\in\mathbb{C}^{N\times N}}}{\mathrm{argmin}}\sum_{m=1}^{M} w_m\left\|\Psi(\pmb{y}^{(m)})G^{-\frac{1}{2}}-\Psi(\pmb{x}^{(m)})BG^{-\frac{1}{2}}\right\|_2^2.
\end{equation}
Letting $B=G^{-1/2}CG^{1/2}$ for some matrix $C$, the problem in \eqref{EDMD_opt_prob3} is equivalent to
\begin{equation}\setlength\abovedisplayskip{4pt}\setlength\belowdisplayskip{4pt}
\label{EDMD_opt_prob4}
\underset{\underset{C^*C=I}{C\in\mathbb{C}^{N\times N}}}{\mathrm{argmin}}\left\|W^{\frac{1}{2}}\Psi_XG^{-\frac{1}{2}}C-W^{\frac{1}{2}}\Psi_YG^{-\frac{1}{2}}\right\|^2_F,
\end{equation}
where $\|\cdot\|_F$ denotes the Frobenius norm. The problem~\eqref{EDMD_opt_prob4} is known as the \textit{orthogonal Procrustes problem} \cite{schonemann1966generalized,arun1992unitarily}. The predominant method for computing a solution is via the singular value decomposition (SVD). First, we compute an SVD of
$$\setlength\abovedisplayskip{4pt}\setlength\belowdisplayskip{4pt}
G^{-\frac{1}{2}}\Psi_Y^*W\Psi_XG^{-\frac{1}{2}}=G^{-\frac{1}{2}}A^*G^{-\frac{1}{2}}=U_1\Sigma U_2^*.
$$
A solution of \eqref{EDMD_opt_prob4} is then $C=U_2U_1^*$ and we take $\mathbb{K}=G^{-1/2}U_2U_1^*G^{1/2}$. If $\Sigma$ is degenerate, then $\mathbb{K}$ need not be unique.

Since $\mathbb{K}$ is similar to a unitary matrix, its eigenvalues lie along the unit circle. For stability purposes, the best way to compute the eigendecomposition of $\mathbb{K}$ is to do so for the unitary matrix $U_2U_1^*$. To numerically ensure an orthonormal basis of eigenvectors, we use Matlab's \texttt{schur} command in the examples of~\cref{sec:numerical_examples}. The computation of $\mathbb{K}$ and its eigendecomposition is summarized in Algorithm~\ref{alg:mp_EDMD}. The following proposition lists some useful properties of Algorithm~\ref{alg:mp_EDMD}.

\begin{algorithm}[t]\linespread{1.1}\selectfont{}
\textbf{Input:} Snapshot data $\{\pmb{x}^{(m)},\pmb{y}^{(m)}=F(\pmb{x}^{(m)})\}_{m=1}^{M}$, quadrature weights $\{w_m\}_{m=1}^{M}$, and a dictionary of functions $\{\psi_j\}_{j=1}^{N}$.\\
\vspace{-4mm}
\begin{algorithmic}[1]
\State Compute $G=\Psi_X^*W\Psi_X$ and $A=\Psi_X^*W\Psi_Y$, where $\Psi_X$, $\Psi_Y$ are given in~\eqref{psidef}.
\State Compute an SVD of $G^{-1/2}A^*G^{-1/2}=U_1\Sigma U_2^*$.
\State Compute the eigendecomposition $U_2U_1^*=\hat V\Lambda \hat V^*$.
\State Compute $\mathbb{K}=G^{-1/2}U_2U_1^* G^{1/2}$ and $V=G^{-1/2}\hat V$.
\end{algorithmic} \textbf{Output:} Koopman matrix $\mathbb{K}$, with eigenvectors $V$ and eigenvalues $\Lambda$.
\caption{: \texttt{mpEDMD} for approximating spectral properties of $\mathcal{K}$.}\label{alg:mp_EDMD}\linespread{1}\selectfont{}
\end{algorithm}

\begin{proposition}
\label{prop_basic}
The output of Algorithm~\ref{alg:mp_EDMD} has the following properties.
\begin{enumerate}
	\item[(i)] If~\eqref{quad_convergence} holds, then any limit point of the matrices $\mathbb{K}$ as $M\rightarrow\infty$ corresponds to an operator that is the unitary part of a polar decomposition of $\mathcal{P}_{V_{N}}\mathcal{K}\mathcal{P}_{V_{N}}^*$.
	\item[(ii)] If~\eqref{quad_convergence} holds and $g=\Psi \pmb{g}$ is such that $\mathcal{K}g\in V_{N}$, then $\lim_{M\rightarrow\infty}\mathbb{K}\pmb{g}$ exists and $\lim_{M\rightarrow\infty}\Psi \mathbb{K}\pmb{g}=\mathcal{K}g$.
	\item[(iii)] For any $\epsilon\geq 0$, $	\sigma_{\epsilon}(\mathbb{K})\subset \sigma_{\epsilon \kappa(G^{1/2})}(U_2U_1^*)\subset \{z:||z|-1|\leq \epsilon \kappa(G^{1/2})\}.$
	\item[(iv)] $\kappa(V)\leq \kappa(G^{1/2})$.
\end{enumerate}
\end{proposition}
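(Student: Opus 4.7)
The plan is to analyze Algorithm~\ref{alg:mp_EDMD} through two coordinate systems on $V_N$: the $\Psi$-basis, in which $\pmb{g}\in\mathbb{C}^N$ represents $\Psi\pmb{g}\in V_N$, and the orthonormal coordinates $\tilde g:=G^{1/2}\pmb{g}$, in which the quadrature inner product becomes the standard Euclidean inner product on $\mathbb{C}^N$. Under the change of basis $S:=G^{1/2}$, the Galerkin-type matrix $G^{-1}A$ becomes $C:=G^{-1/2}AG^{-1/2}$. Step~2 of the algorithm computes an SVD $C^*=U_1\Sigma U_2^*$, so $C=U_2\Sigma U_1^*=(U_2U_1^*)(U_1\Sigma U_1^*)$ is a polar decomposition of $C$ with unitary factor $U_2U_1^*$. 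Hence $\mathbb{K}=S^{-1}(U_2U_1^*)S$ is the $\Psi$-basis representation of this unitary factor, and all four claims are read off from this identification.

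Part (i) follows from the observation above combined with~\eqref{quad_convergence}: $G\to G_\infty$ with $(G_\infty)_{jk}=\langle\psi_k,\psi_j\rangle$, $A\to A_\infty$ with $(A_\infty)_{jk}=\langle\mathcal{K}\psi_k,\psi_j\rangle$, and $C\to C_\infty:=G_\infty^{-1/2}A_\infty G_\infty^{-1/2}$, which by a short calculation is the orthonormal-coordinate matrix of $T:=\mathcal{P}_{V_N}\mathcal{K}\mathcal{P}_{V_N}^*|_{V_N}$. Since the set of unitary matrices is compact, any sequence $M\to\infty$ admits a subsequence along which $U_1$ and $U_2$ converge; the limit pair is an SVD of $C_\infty^*$, so the corresponding limit of $U_2U_1^*$ is a unitary factor in a polar decomposition of $C_\infty$. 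Conjugating by $G_\infty^{\pm 1/2}$ returns the claimed operator in the $\Psi$-basis, which is a unitary factor in a polar decomposition of $T$.

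The main work is part (ii), where the isometric nature of $\mathcal{K}$ is essential. If $\mathcal{K}g=\Psi\pmb{h}\in V_N$, then $Tg=\mathcal{K}g$, so in the limit $C_\infty\tilde g=G_\infty^{1/2}\pmb{h}$ and $\|C_\infty\tilde g\|_2=\|\mathcal{K}g\|=\|g\|=\|\tilde g\|_2$. Because $\|T\|\le\|\mathcal{K}\|=1$ forces $\||C_\infty|\|\le 1$, the equality $\|C_\infty\tilde g\|_2=\|\tilde g\|_2$ forces $|C_\infty|\tilde g=\tilde g$, whence $\tilde g\in(\ker C_\infty)^\perp$. A standard uniqueness fact about the polar decomposition says that any two unitary factors $U,U'$ of $C_\infty$ agree on $(\ker C_\infty)^\perp$, so $U\tilde g=U'\tilde g=C_\infty\tilde g$. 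Combining this with the subsequential convergence from (i), every subsequential limit of $U_2U_1^*\,G^{1/2}\pmb{g}$ equals $G_\infty^{1/2}\pmb{h}$, so the full sequence converges; conjugating by $G^{\pm 1/2}$ gives $\mathbb{K}\pmb{g}\to\pmb{h}$ and hence $\Psi\mathbb{K}\pmb{g}\to\Psi\pmb{h}=\mathcal{K}g$. The main obstacle is precisely the bookkeeping around the non-uniqueness of the polar decomposition when $C_\infty$ has zero or repeated singular values; the isometry of $\mathcal{K}$ is used exactly to place $\tilde g$ safely inside the uniqueness subspace.

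Parts (iii) and (iv) reduce to standard linear algebra. Since $(\mathbb{K}-\lambda)^{-1}=S^{-1}(U_2U_1^*-\lambda)^{-1}S$, submultiplicativity gives $\|(\mathbb{K}-\lambda)^{-1}\|\le\kappa(S)\|(U_2U_1^*-\lambda)^{-1}\|$, which together with similarity of spectra yields $\sigma_\epsilon(\mathbb{K})\subset\sigma_{\epsilon\kappa(G^{1/2})}(U_2U_1^*)$; the second inclusion follows from normality, since for the unitary $U_2U_1^*$ one has $\|(U_2U_1^*-\lambda)^{-1}\|=1/\mathrm{dist}(\lambda,\sigma(U_2U_1^*))$ with $\sigma(U_2U_1^*)\subset\mathbb{T}$. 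For (iv), $V=G^{-1/2}\hat V$ with $\hat V$ unitary gives $\|V\|\le\|G^{-1/2}\|$ and $\|V^{-1}\|=\|\hat V^*G^{1/2}\|\le\|G^{1/2}\|$, so $\kappa(V)\le\kappa(G^{1/2})$.
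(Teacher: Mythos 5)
Your proposal is correct and takes essentially the same route as the paper: identify $\mathbb{K}$ with the unitary polar factor of the Galerkin matrix in $G^{1/2}$-orthonormalized coordinates for (i), use the isometry of $\mathcal{K}$ together with $\mathcal{K}g\in V_N$ to force the modulus factor to fix $G^{1/2}\pmb{g}$ in (ii), and obtain (iii)--(iv) directly from the similarity $\mathbb{K}=G^{-1/2}(U_2U_1^*)G^{1/2}$ and unitarity of $U_2U_1^*$, $\hat V$. The only cosmetic difference is in (ii), where you invoke uniqueness of the polar unitary factor on $(\ker C_\infty)^\perp$ plus an all-subsequential-limits-agree argument, while the paper shows $(I-\Sigma)U_1^*G^{1/2}\pmb{g}\to 0$ and compares $\mathbb{K}\pmb{g}$ with the convergent $\mathbb{K}_{\mathrm{EDMD}}\pmb{g}$ --- the same mechanism in different packaging.
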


\begin{proof}
Suppose that $B\in\mathbb{C}^{N\times N}$ is a limit point of the matrix $\mathbb{K}$ as $M\rightarrow\infty$. By taking subsequences if necessary (all matrices are bounded), we may assume that $B=\smash{\hat G^{-\frac{1}{2}}}\hat U_2\hat U_1^* \smash{\hat G^{\frac{1}{2}}}$, $\hat U_j=\lim_{M\rightarrow\infty}U_j$, $\hat G=\lim_{M\rightarrow\infty}G$, and $\hat \Sigma=\lim_{M\rightarrow\infty}\Sigma$. In the large data limit, the problem~\eqref{EDMD_opt_prob3} is independent of the choice of basis for $V_N$, and property (i) is also basis independent. Hence, we may assume without loss of generality that $\hat G$ is the identity matrix corresponding to an orthonormal basis. It follows that $\mathbb{K}_{\mathrm{EDMD}}=\hat U_2\hat\Sigma\hat U_1^*$. Part (i) now follows.

For part (ii), since $\mathcal{K}g\in V_{N}$, we have $\|g\|=\|\mathcal{K}g\|=\lim_{M\rightarrow\infty}\|U_2\Sigma U_1^*\smash{G^{\frac{1}{2}}}\pmb{g}\|_{2}=$ $\lim_{M\rightarrow\infty}\|\Sigma U_1^*\smash{G^{\frac{1}{2}}}\pmb{g}\|_{2},$ where the last equality holds because $U_2$ is unitary. Similarly,
$
\|g\|=\lim_{M\rightarrow\infty}\|\smash{G^{\frac{1}{2}}}\pmb{g}\|_{2}=\lim_{M\rightarrow\infty}\|U_1^*\smash{G^{\frac{1}{2}}}\pmb{g}\|_{2}.
$
Hence $\lim_{M\rightarrow\infty}\|U_1^*\smash{G^{\frac{1}{2}}}\pmb{g}\|_{2}=\lim_{M\rightarrow\infty}\|\Sigma U_1^*\smash{G^{\frac{1}{2}}}\pmb{g}\|_{2}$. $\Sigma$ is a diagonal matrix and all of its entries are in $[0,1]$. We claim that $\lim_{M\rightarrow\infty}[U_1^*\smash{G^{\frac{1}{2}}}-\Sigma U_1^*\smash{G^{\frac{1}{2}}}]\pmb{g}=0$. If not, then by taking a subsequence if necessary, we may assume that $\lim_{M\rightarrow\infty}\Sigma$ and $\lim_{M\rightarrow\infty} U_j$ exist with $\lim_{M\rightarrow\infty}[U_1^*\smash{G^{\frac{1}{2}}}-\Sigma U_1^*\smash{G^{\frac{1}{2}}}]\pmb{g}\neq 0$. But this contradicts $\lim_{M\rightarrow\infty}\|U_1^*\smash{G^{\frac{1}{2}}}\pmb{g}\|_{2}=\lim_{M\rightarrow\infty}\|\Sigma U_1^*\smash{G^{\frac{1}{2}}}\pmb{g}\|_{2}$. Since $\lim_{M\rightarrow\infty}\smash{G^{-\frac{1}{2}}}U_2\Sigma U_1^*\smash{G^{\frac{1}{2}}}\pmb{g}=\lim_{M\rightarrow\infty}\mathbb{K}_{\mathrm{EDMD}}\pmb{g}$ exists, $\lim_{M\rightarrow\infty}\mathbb{K}\pmb{g}$ exists and $\lim_{M\rightarrow\infty}\Psi \mathbb{K}\pmb{g}=\lim_{M\rightarrow\infty}\Psi \mathbb{K}_{\mathrm{EDMD}}\pmb{g}=g$.

For part (iii), for any $z\notin\sigma(\mathbb{K})$ we have
$\smash{
\|(\mathbb{K}-z)^{-1}\|=\|G^{-\frac{1}{2}}(U_2U_1^*-z)^{-1}G^{\frac{1}{2}}\|}\leq \smash{\kappa(G^{\frac{1}{2}})\|(U_2U_1^*\!-\!z)^{-1}\|.}
$
Hence, $\sigma_{\epsilon}(\mathbb{K})\subset \sigma_{\epsilon \kappa(G^{1/2})}(U_2U_1^*)$. $U_2U_1^*$ is unitary, and hence $\smash{\sigma_{\epsilon \kappa(G^{\frac{1}{2}})}(U_2U_1^*)\!\!\subset\!\! \{z\!:\!\!||z|\!-\!1|\!\leq\! \epsilon \kappa(\smash{G^{\frac{1}{2}}})\}}$. Finally, $\! V\!=\! \smash{G^{-\frac{1}{2}}}\!\hat V\!$ for unitary $\hat V$ so (iv) holds.
\end{proof}

Part (i) of~\cref{prop_basic} provides a geometric interpretation of Algorithm~\ref{alg:mp_EDMD}, that we use to prove convergence of spectral measures in~\cref{sec:conv_theory}. Part (ii) shows that Algorithm~\ref{alg:mp_EDMD} respects the invariance properties of $\mathcal{K}$. This is particularly useful for delay embedding (see~\cref{delay}). Parts (iii) and (iv) provide conditioning bounds on the eigendecomposition of $\mathbb{K}$. This is useful since we can only ever approximate the eigendecomposition using finite $M$. In contrast, conditioning bounds for $\mathbb{K}_{\mathrm{EDMD}}$ cannot hold in general. In fact, $\mathbb{K}_{\mathrm{EDMD}}$ need not even be diagonalizable (see \cref{sec:shift_example}). Further stability properties of \texttt{mpEDMD} are investigated in~\cref{num_example:pendulum}.

\section{Convergence theory}
\label{sec:conv_theory}

We now prove convergence results of Algorithm~\ref{alg:mp_EDMD} to the spectral information of $\mathcal{K}$. Throughout, $\{v_j\}_{j=1}^N$ denotes the eigenvectors of $\mathbb{K}$ (output of Algorithm~\ref{alg:mp_EDMD}) with corresponding eigenvalues $\{\lambda_j\}_{j=1}^N$.

\subsection{Approximation of projection-valued spectral measures}
To approximate the spectral measure $\mathcal{E}$, we consider the spectral measure, $\mathcal{E}_{N,M}$, of the matrix $\mathbb{K}$ on the Hilbert space $\mathbb{C}^N$ with the inner product induced by $G$,
$$\setlength\abovedisplayskip{4pt}\setlength\belowdisplayskip{4pt}
d\mathcal{E}_{N,M}(\lambda)=\sum_{j=1}^{N}v_jv_j^*G\delta(\theta-\lambda_j)\,d\lambda.
$$
We prove weak convergence\footnote{This is not to be confused with weak operator convergence of the operator-valued measures.} of $\Psi\mathcal{E}_{N,M}$ and begin with the following bound.

\begin{theorem}
\label{thm_weak_conv}
Suppose that $\phi:\mathbb{T}\rightarrow\mathbb{R}$ is Lipschitz continuous with Lipschitz constant bounded by $1$. Then for any $L\in\mathbb{N}$, $g\in L^2(\Omega,\omega)$ and $\pmb{g}\in\mathbb{C}^N$,
\begin{equation*}\setlength\abovedisplayskip{4pt}\setlength\belowdisplayskip{4pt}
\begin{split}
&\left\|\int_{\mathbb{T}} \!\phi(\lambda)[d\mathcal{E}(\lambda)g\!-\!\Psi d\mathcal{E}_{N,M}(\lambda) \pmb{g}]\right\|\leq C\Big(\frac{\log(L)}{L}\!\left[\!\|g\|\!+\!\|\Psi G^{-\frac{1}{2}}\!\|\|G^{\frac{1}{2}}\pmb{g}\|_2\!\right]\!+\!\|g-\Psi\pmb{g}\|\|\phi\|_\infty\!\!\\
&\!\!\quad\quad\quad\quad\quad\quad\quad\quad\quad\quad\quad\quad\quad\quad\quad\quad+\!\sum_{1\leq l \leq L}\!\frac{\left[\|\mathcal{K}^lg-\Psi\mathbb{K}^l\pmb{g}\|+\|(\mathcal{K}^*)^lg-\Psi\mathbb{K}^{-l}\pmb{g}\|\right]}{l}\Big),
\end{split}
\end{equation*}
where $C$ is a universal constant.
\end{theorem}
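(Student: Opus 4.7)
The plan is to approximate $\phi$ by a trigonometric polynomial on $\mathbb{T}$ and reduce the estimate to a moment-by-moment comparison of $\mathcal{E}$ and $\mathcal{E}_{N,M}$. Concretely, let $S_L\phi(\lambda)=\sum_{l=-L}^{L}\hat\phi(l)\lambda^{l}$ be the $L$-th Fourier partial sum on $\mathbb{T}$. Two standard facts about Fourier series suffice: first, for $\phi$ Lipschitz with constant at most $1$, Jackson's theorem gives a best trigonometric approximation of rate $O(1/L)$, and combined with the $O(\log L)$ Lebesgue constant of the Dirichlet kernel this yields $\|\phi-S_L\phi\|_\infty\leq C\log(L)/L$; second, integration by parts gives $|\hat\phi(l)|\leq 1/|l|$ for $l\neq 0$ and $|\hat\phi(0)|\leq\|\phi\|_\infty$.

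Next I would identify $\int_{\mathbb{T}}\lambda^{l}\,d\mathcal{E}(\lambda)g$ via the unitary extension $\mathcal{K}'$ from \cref{spec_meas_indep}: the functional calculus for the normal $\mathcal{K}'$ and the identification of $\mu_{g,h}$'s moments in the proof of \cref{spec_meas_indep} give $\int \lambda^l\,d\mathcal{E}(\lambda)g=\mathcal{K}^{l}g$ for $l\geq 0$ and $(\mathcal{K}^{*})^{|l|}g$ for $l<0$. On the finite side, because $\mathbb{K}$ is similar to a unitary matrix (\cref{prop_basic}), the vectors $\hat v_j=G^{1/2}v_j$ form an orthonormal basis of $\mathbb{C}^{N}$, so $\{v_j\}$ is orthonormal in the $G$-inner product, $\mathcal{E}_{N,M}(\mathbb{T})=\sum_j v_j v_j^{*}G=I$, and a one-line spectral computation gives $\int \lambda^{l}\,d\mathcal{E}_{N,M}(\lambda)\pmb{g}=\mathbb{K}^{l}\pmb{g}$ for every $l\in\mathbb{Z}$.

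With these identifications in hand, I would write
$$\int\phi[d\mathcal{E}g-\Psi\,d\mathcal{E}_{N,M}\pmb{g}]=\int(\phi-S_L\phi)\,d\mathcal{E}g-\Psi\!\!\int(\phi-S_L\phi)\,d\mathcal{E}_{N,M}\pmb{g}+\sum_{l=-L}^{L}\hat\phi(l)\big(\mathcal{K}^{[l]}g-\Psi\mathbb{K}^{l}\pmb{g}\big),$$
where $\mathcal{K}^{[l]}$ denotes $\mathcal{K}^{l}$ if $l\geq 0$ and $(\mathcal{K}^*)^{|l|}$ otherwise. The first tail is bounded by $\|\phi-S_L\phi\|_\infty\|g\|\leq C\log(L)/L\cdot\|g\|$. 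For the second tail, the $G$-orthonormality of $\{v_j\}$ gives $\|G^{1/2}\!\int(\phi-S_L\phi)\,d\mathcal{E}_{N,M}\pmb{g}\|_2\leq\|\phi-S_L\phi\|_\infty\|G^{1/2}\pmb{g}\|_2$, and inserting the factor $\Psi G^{-1/2}\cdot G^{1/2}$ yields the $\log(L)/L\cdot\|\Psi G^{-1/2}\|\|G^{1/2}\pmb{g}\|_2$ contribution. The polynomial part splits at $l=0$, where $|\hat\phi(0)|\leq\|\phi\|_\infty$ produces the $\|g-\Psi\pmb{g}\|\|\phi\|_\infty$ term, and at $l\neq 0$, where $|\hat\phi(l)|\leq 1/|l|$ produces exactly the claimed symmetric sum.

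The only delicate point, rather than a true obstacle, is that $\mathcal{K}$ is merely an isometry, so negative moments of $\mathcal{E}$ must be interpreted through the unitary extension and correctly translated into $\mathcal{K}^{*}$; this is precisely what \cref{spec_meas_indep} is engineered for. Everything else reduces to the two standard Fourier facts above together with the $G$-orthonormality of the eigensystem built into \cref{alg:mp_EDMD}, which is why the measure-preserving constraint $\mathbb{K}^{*}G\mathbb{K}=G$ is essential for the argument to close.
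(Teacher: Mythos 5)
Your proposal is correct and follows essentially the same route as the paper's proof: truncate $\phi$ by its degree-$L$ Fourier/Laurent sum, use the Lipschitz coefficient decay $|c_l|\lesssim 1/|l|$ and the Jackson-type bound $\|\phi-S_L\phi\|_\infty\lesssim\log(L)/L$, identify the moments of $\mathcal{E}$ and $\mathcal{E}_{N,M}$ with powers of $\mathcal{K}$ (via the unitary extension) and of $\mathbb{K}$, and control the discrete tail using that $\mathbb{K}$ is unitary in the $G$-inner product (equivalently, the paper's functional calculus for the unitary $U_2U_1^*$ after conjugating by $G^{1/2}$). No gaps; the only cosmetic difference is that the paper subtracts $\phi(0)$ before invoking Jackson, which your formulation handles implicitly since the approximation error depends only on the Lipschitz constant.
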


\begin{proof}
Consider the Laurent series of $\phi$, $\phi(\lambda)=\sum_{l=-\infty}^\infty c_l\lambda^l$, and let 
$$\setlength\abovedisplayskip{4pt}\setlength\belowdisplayskip{4pt}
S_L\phi(\lambda)= \sum_{|l|\leq L} c_l\lambda^l,\quad\text{where } c_l=\frac{1}{2\pi i}\int_{\mathbb{T}} \lambda^{-(l+1)}\phi(\lambda)\, d\lambda.
$$
For $|l|\geq 1$, $|c_l|\lesssim 1/|l|$. Arguing as in the proof of~\cref{spec_meas_indep},
$$\setlength\abovedisplayskip{4pt}\setlength\belowdisplayskip{4pt}
\int_{\mathbb{T}} \lambda^l \, d\mathcal{E}(\lambda)g=\begin{cases}\mathcal{K}^lg,&\quad \text{if }l\geq 0,\\
(\mathcal{K}^*)^{-l}g,&\quad \text{otherwise}
\end{cases}.
$$
Arguing directly, we see that $\Psi \int_{\mathbb{T}}\lambda^l\, d\mathcal{E}_{N,M}(\lambda) \pmb{g}=\Psi\mathbb{K}^l\pmb{g}.$ It follows that
\begin{equation}\label{W1a}\setlength\abovedisplayskip{4pt}\setlength\belowdisplayskip{4pt}
\begin{split}
&\left\|\int_{\mathbb{T}} S_L\phi(\lambda)\,d\mathcal{E}(\lambda)g-\Psi \int_{\mathbb{T}}S_L\phi(\lambda)\, d\mathcal{E}_{N,M}(\lambda) \pmb{g}\right\|\\
&\quad\quad\quad\lesssim \|g-\Psi\pmb{g}\|\|\phi\|_\infty+\sum_{1\leq l \leq L}\frac{1}{l}\left[\|\mathcal{K}^lg-\Psi\mathbb{K}^l\pmb{g}\|+\|(\mathcal{K}^*)^lg-\Psi\mathbb{K}^{-l}\pmb{g}\|\right].
\end{split}
\end{equation}
Let $\hat \phi=\phi-\phi(0)$, then since the Lipschitz constant of $\phi$ is bounded by $1$, $\|\hat \phi\|_\infty\leq \pi$. Since $\|\phi-S_L\phi\|_{\infty}=\|\hat\phi-S_L\hat\phi\|_{\infty}\lesssim {\log(L)}/{L}$ \cite[Chapter I.3]{jackson1930theory}, it follows that
\begin{equation}\label{W1b}\setlength\abovedisplayskip{4pt}\setlength\belowdisplayskip{4pt}
\left\|\int_{\mathbb{T}} (\phi-S_L\phi)(\lambda)\, d\mathcal{E}(\lambda)g\right\|\lesssim \frac{\log(L)}{L}\|g\|.
\end{equation}
Using the matrix functional calculus, we have
$$\setlength\abovedisplayskip{4pt}\setlength\belowdisplayskip{4pt}
\Psi \int_{\mathbb{T}}(\phi-S_L\phi)(\lambda)\, d\mathcal{E}_{N,M}(\lambda) \pmb{g}=\Psi G^{-1/2}(\phi-S_L\phi)(U_2U_1^*)G^{1/2}\pmb{g}.
$$
Since $U_2U_1^*$ is unitary, $\|(\phi-S_L\phi)(U_2U_1^*)\|\leq\|\phi-S_L\phi\|_\infty$. It follows that
\begin{equation}\label{W1c}\setlength\abovedisplayskip{4pt}\setlength\belowdisplayskip{4pt}
\left\|\Psi\int_{\mathbb{T}} (\phi-S_L\phi)(\lambda)\, d\mathcal{E}_{N,M}(\lambda) \pmb{g}\right\|\lesssim \frac{\log(L)}{L}\|\Psi G^{-1/2}\|\|G^{1/2}\pmb{g}\|_2.
\end{equation}
\cref{thm_weak_conv} follows by combining~\eqref{W1a},~\eqref{W1b} and~\eqref{W1c}.
\end{proof}

A key part of the above proof is that $\mathbb{K}$ represents a normal truncation of $\mathcal{K}$. Combined with the strong convergence of $\mathbb{K}$ to $\mathcal{K}$, this allows us to prove convergence of spectral measures. We consider a sequence of vectors spaces $\{V_N\}_{N=1}^\infty$ and the large data limit $M\rightarrow\infty$. The following lemma shows that the first summation term in~\cref{thm_weak_conv} converges to zero as $N\rightarrow\infty$ if the sequence of vector spaces is dense, and that the second summation term also converges to zero if, in addition, $\mathcal{K}$ is unitary. This result shows strong operator convergence of $\mathbb{K}^l$. The $\limsup$ as $M\rightarrow\infty$ is needed in the case that $\mathbb{K}$ is not unique, but is of no practical concern.

\begin{lemma}
\label{lemma_SOT_conv}
Suppose that $\lim_{N\rightarrow\infty}\!\mathrm{dist}(h,V_{N})=0$ for all $h\in L^2(\Omega,\omega)$ and~\eqref{quad_convergence} holds.  Then for any $g\in L^2(\Omega,\omega)$ and $\pmb{g}_N\in\mathbb{C}^N$ with $\lim_{N\rightarrow\infty}\|g-\Psi \pmb{g}_N\|=0$,
\begin{equation}\setlength\abovedisplayskip{4pt}\setlength\belowdisplayskip{4pt}
\label{limit1}
\lim_{N\rightarrow\infty}\limsup_{M\rightarrow\infty} \|\mathcal{K}^lg-\Psi\mathbb{K}^l\pmb{g}_N\|=0,\quad \forall l\in\mathbb{N}.
\end{equation}
If, in addition, $\mathcal{K}$ is unitary, then
\begin{equation}\setlength\abovedisplayskip{4pt}\setlength\belowdisplayskip{4pt}
\label{limit2}
\lim_{N\rightarrow\infty}\limsup_{M\rightarrow\infty} \|(\mathcal{K}^*)^lg-\Psi\mathbb{K}^{-l}\pmb{g}_N\|=0,\quad \forall l\in\mathbb{N}.
\end{equation}
\end{lemma}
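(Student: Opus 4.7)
The plan is to combine Proposition~\ref{prop_basic}(i) with a polar-decomposition estimate for the restricted operator $P_N\mathcal{K}P_N^*$, where $P_N=\mathcal{P}_{V_N}$. Fix $N$ and write $P_N\mathcal{K}P_N^* = \mathcal{K}_NS_N$ for a polar decomposition, with $S_N = |P_N\mathcal{K}P_N^*|\geq 0$ uniquely determined and $\mathcal{K}_N$ a unitary on $V_N$ (not unique if $S_N$ is singular). By Proposition~\ref{prop_basic}(i), every subsequential limit of $\mathbb{K}$ as $M\to\infty$ corresponds, via $\pmb{g}\mapsto\Psi\pmb{g}$, to such a unitary part $\mathcal{K}_N$. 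Since $\mathbb{K}$ is bounded uniformly in large $M$ (being similar to a unitary with $\kappa(G^{1/2})$ controlled via~\eqref{quad_convergence}), it suffices to show
$$\sup_{\mathcal{K}_N}\|\mathcal{K}_N^l\Psi\pmb{g}_N - \mathcal{K}^l g\|\to 0 \quad\text{as } N\to\infty,$$
the supremum being taken over unitary parts $\mathcal{K}_N$, and analogously with $\mathcal{K}_N^{-l}$ and $(\mathcal{K}^*)^l$ for \eqref{limit2}.

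I would handle $l=1$ via a bound on $I-S_N$. Since $\mathcal{K}$ is an isometry, $\|P_N\mathcal{K}h\|\leq\|h\|$ for $h\in V_N$, so $S_N$ has spectrum in $[0,1]$; hence $0\leq S_N\leq I$ and $S_N\geq S_N^2$. For any $h\in V_N$,
$$\|(I-S_N)h\|^2\leq\langle(I-S_N)h,h\rangle\leq\|h\|^2-\|S_Nh\|^2 = \|h\|^2-\|P_N\mathcal{K}h\|^2 = \|(I-P_N)\mathcal{K}h\|^2,$$
using the isometry of $\mathcal{K}$ in the last equality. Since $\mathcal{K}_N$ is isometric on $V_N$, writing $g_N = \Psi\pmb{g}_N$ gives $\|\mathcal{K}_Ng_N - P_N\mathcal{K}g_N\| = \|(I-S_N)g_N\|$, and the triangle inequality yields
$$\|\mathcal{K}_Ng_N - \mathcal{K}g\|\leq 2\|(I-P_N)\mathcal{K}g_N\| + \|g_N - g\|.$$
Both terms tend to zero: the second by hypothesis, the first since $\mathcal{K}g_N\to\mathcal{K}g$ and $P_N\to I$ strongly. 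Crucially, the bound depends only on $P_N$ and $g_N$, not on the choice of unitary part $\mathcal{K}_N$.

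For $l\geq 2$ I would induct. Setting $h_N = \mathcal{K}_N^{l-1}g_N\in V_N$, the inductive hypothesis gives $h_N\to\mathcal{K}^{l-1}g$ uniformly in $\mathcal{K}_N$, and applying the $l=1$ estimate with $(g,g_N)$ replaced by $(\mathcal{K}^{l-1}g, h_N)$ yields $\mathcal{K}_N h_N = \mathcal{K}_N^l g_N\to\mathcal{K}^l g$, proving \eqref{limit1}. For \eqref{limit2}, taking adjoints in $P_N\mathcal{K}P_N^* = \mathcal{K}_NS_N$ gives $P_N\mathcal{K}^*P_N^* = \mathcal{K}_N^*(\mathcal{K}_NS_N\mathcal{K}_N^*)$ with $\mathcal{K}_NS_N\mathcal{K}_N^*\geq 0$, so $\mathcal{K}_N^*$ is a unitary part of $P_N\mathcal{K}^*P_N^*$. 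When $\mathcal{K}$ is unitary, $\mathcal{K}^*$ is also an isometry; along any subsequence where $\mathbb{K}\to\mathcal{K}_N$ one has $\mathbb{K}^{-l}\to(\mathcal{K}_N^*)^l$, so applying the argument above with $\mathcal{K}$ replaced by $\mathcal{K}^*$ and $\mathcal{K}_N$ by $\mathcal{K}_N^*$ yields \eqref{limit2}.

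The main obstacle is the uniformity in the unitary part $\mathcal{K}_N$, because different subsequences of $M$ may produce different $\mathcal{K}_N$ whenever $S_N$ has zero or repeated singular values. The estimate $\|(I-S_N)h\|\leq\|(I-P_N)\mathcal{K}h\|$ depends only on the uniquely determined $S_N$ and $P_N$ and propagates through the induction via the isometry of $\mathcal{K}_N$, giving the needed uniformity without requiring any quantitative control of the singular values of $S_N$ away from zero.
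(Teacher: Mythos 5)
Your proof is correct, and it shares the paper's overall skeleton: pass to subsequences in $M$ so that $\mathbb{K}$ converges, invoke Proposition~\ref{prop_basic}(i) to identify the limit with a unitary factor $\mathcal{K}_N$ of the polar decomposition of the compression $\mathcal{P}_{V_N}\mathcal{K}\mathcal{P}_{V_N}^*$, and then show this unitary factor applied to $g_N$ converges to $\mathcal{K}^l g$, uniformly over the possible (non-unique) unitary parts. Where you diverge is in how that last convergence is established. The paper works on $L^2(\Omega,\omega)$ with $\mathcal{Q}_N=\mathcal{P}_N^*\mathcal{P}_N$ and uses strong operator convergence: $\mathcal{Q}_N\mathcal{K}\mathcal{Q}_N\to\mathcal{K}$ and $(\mathcal{Q}_N\mathcal{K}^*\mathcal{Q}_N\mathcal{K}\mathcal{Q}_N)^{1/2}\to I$ strongly, from which $\mathcal{P}_N^*\mathcal{K}_N\mathcal{P}_N\to\mathcal{K}$ strongly and hence its powers converge strongly; for \eqref{limit2} it then deduces $\mathcal{P}_N^*\mathcal{K}_N^*\mathcal{P}_N\to\mathcal{K}^*$ by an algebraic manipulation using $\mathcal{K}\mathcal{K}^*=I$ and $\mathcal{K}_N^*\mathcal{K}_N=\mathcal{P}_N\mathcal{P}_N^*$. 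You instead prove the elementary quantitative estimate $\|(I-S_N)h\|\le\|(I-P_N)\mathcal{K}h\|$ for $h\in V_N$ (using only $0\le S_N\le I$, so $S_N^2\le S_N$, and the isometry of $\mathcal{K}$), propagate it by induction on $l$, and handle \eqref{limit2} by the symmetry that $\mathcal{K}_N^*$ is a unitary part of the polar decomposition of $P_N\mathcal{K}^*P_N^*$, with $\mathcal{K}^*$ an isometry when $\mathcal{K}$ is unitary. Your route avoids appealing to continuity of the operator square root under strong convergence, manifestly keeps the bound independent of the choice of unitary part (the point you correctly flag as the only delicate issue, which the limsup in the statement is there to absorb), and as a bonus gives an explicit bound of the form $\|\mathcal{K}_N^lg_N-\mathcal{K}^lg\|\lesssim_l\|g_N-g\|+\sum_{j=1}^l\|(I-P_N)\mathcal{K}^jg\|$; the paper's route is slightly shorter at the level of soft operator theory and treats all powers at once via strong operator convergence. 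Both are valid; just make sure, as the paper does, that the subsequence-compactness step implicitly uses invertibility of the limiting Gram matrix so that $\mathbb{K}=G^{-1/2}U_2U_1^*G^{1/2}$ indeed has convergent subsequences.
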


\begin{proof}
Recall that $\mathcal{P}_{N}$ is the orthogonal projection onto $V_{N}$ so that $\mathcal{P}_{N}\mathcal{P}_{N}^*$ is the identity on $V_{N}$. For notational convenience, let $\mathcal{Q}_{N}=\mathcal{P}_{N}^*\mathcal{P}_{N}$. The assumption that $\lim_{N\rightarrow\infty}\mathrm{dist}(h,V_{N})=0$ for all $h\in L^2(\Omega,\omega)$ implies that $\mathcal{Q}_{N}$ converges strongly to the identity on $L^2(\Omega,\omega)$, $I$. It follows that $\mathcal{Q}_{N}\mathcal{K}\mathcal{Q}_{N}$ converges strongly to $\mathcal{K}$ and that $(\mathcal{Q}_{N}\mathcal{K}^*\mathcal{Q}_{N}\mathcal{K}\mathcal{Q}_{N})^{1/2}$ converges strongly to $(\mathcal{K}^*\mathcal{K})^{1/2}=I$.

To prove~\eqref{limit1}, we may assume without loss of generality, by taking subsequences if necessary, that the large data limit $\lim_{M\rightarrow\infty}\mathbb{K}$ exists for each fixed $N$. Let $\mathcal{K}_{N}$ denote the operator on $V_{N}$ represented by $\lim_{M\rightarrow\infty}\mathbb{K}$. \cref{prop_basic} (i) shows that
$$\setlength\abovedisplayskip{4pt}\setlength\belowdisplayskip{4pt}
\mathcal{P}_{N}^*\mathcal{K}_{N}\mathcal{P}_{N}(\mathcal{Q}_{N}\mathcal{K}^*\mathcal{Q}_{N}\mathcal{K}\mathcal{Q}_{N})^{1/2}=\mathcal{Q}_{N}\mathcal{K}\mathcal{Q}_{N}.
$$
Let $h\in L^2(\Omega,\omega)$ and $\epsilon>0$. Choose $N_0\in\mathbb{N}$ so that if $N\geq N_0$ then $\|\mathcal{Q}_{N}\mathcal{K}\mathcal{Q}_{N}h-\mathcal{K}h\|\leq \epsilon$ and $\|(\mathcal{Q}_{N}\mathcal{K}^*\mathcal{Q}_{N}\mathcal{K}\mathcal{Q}_{N})^{1/2}h-h\|\leq \epsilon$. It follows that if $N\geq N_0$, then
$$\setlength\abovedisplayskip{4pt}\setlength\belowdisplayskip{4pt}
\|\mathcal{P}_{N}^*\mathcal{K}_{N}\mathcal{P}_{N}h\!-\!\mathcal{K}h\|\!\leq\! \|\mathcal{Q}_{N}\mathcal{K}\mathcal{Q}_{N}h-\mathcal{K}h\|\!+\!\|\mathcal{P}_{N}^*\mathcal{K}_{N}\mathcal{P}_{N}[(\mathcal{Q}_{N}\mathcal{K}^*\mathcal{Q}_{N}\mathcal{K}\mathcal{Q}_{N})^{1/2}h-h]\|\!\leq\! 2\epsilon,
$$
where we have used the fact that $\|\mathcal{P}_{N}^*\mathcal{K}_{N}\mathcal{P}_{N}\|\leq 1$. Since $h\in L^2(\Omega,\omega)$ and $\epsilon>0$ were arbitrary, it follows that $\mathcal{P}_{N}^*\mathcal{K}_{N}\mathcal{P}_{N}$ converges strongly to $\mathcal{K}$ as $N\rightarrow\infty$, and hence $[\mathcal{P}_{N}^*\mathcal{K}_{N}\mathcal{P}_{N}]^l$ converges strongly to $\mathcal{K}^l$ for any $l\in\mathbb{N}$. Let $g_N=\Psi\pmb{g}_N$, then
$$\setlength\abovedisplayskip{4pt}\setlength\belowdisplayskip{4pt}
\lim_{M\rightarrow\infty}\Psi\mathbb{K}^l\pmb{g}_N=\mathcal{P}_{N}^*\mathcal{K}_{N}^l\mathcal{P}_{N}g_N=[\mathcal{P}_{N}^*\mathcal{K}_{N}\mathcal{P}_{N}]^l g_N,\quad \forall l\in\mathbb{N}
$$
Since $g_N$ converges to $g$, $[\mathcal{P}_{N}^*\mathcal{K}_{N}\mathcal{P}_{N}]^l$ converges strongly to $\mathcal{K}^l$, and all relevant operators are uniformly bounded, the limit in~\eqref{limit1} holds.

Now suppose that $\mathcal{K}$ is unitary so that $\mathcal{K}\mathcal{K}^*$ is the identity. Again, we may assume without loss of generality that $\lim_{M\rightarrow\infty}\mathbb{K}$ exists for each fixed $N$. Let $\mathcal{K}_{N}$ denote the operator on $V_{N}$ represented by $\lim_{M\rightarrow\infty}\mathbb{K}$. Since $\mathcal{P}_{N}^*\mathcal{K}_{N}\mathcal{P}_{N}$ converges strongly to $\mathcal{K}$ as $N\rightarrow\infty$, for all $h\in L^2(\Omega,\omega)$, we must have
$$\setlength\abovedisplayskip{4pt}\setlength\belowdisplayskip{4pt}
\limsup_{N\rightarrow\infty}\|[\mathcal{P}_{N}^*\mathcal{K}_{N}\mathcal{P}_{N}]^*\mathcal{P}_{N}^*\mathcal{K}_{N}\mathcal{P}_{N}\mathcal{K}^*h-[\mathcal{P}_{N}^*\mathcal{K}_{N}\mathcal{P}_{N}]^*h\|\leq\!\!\lim_{N\rightarrow\infty}\|\mathcal{P}_{N}^*\mathcal{K}_{N}\mathcal{P}_{N}\mathcal{K}^*h-h\|=0.
$$
Since $\mathcal{K}_{N}^*\mathcal{K}_{N}=\mathcal{P}_{N}\mathcal{P}_{N}^*$ are the identity on $V_{N}$,
$
[\mathcal{P}_{N}^*\mathcal{K}_{N}\mathcal{P}_{N}]^*\mathcal{P}_{N}^*\mathcal{K}_{N}\mathcal{P}_{N}\mathcal{K}^*h=\mathcal{Q}_N\mathcal{K}^*h
$
converges to $\mathcal{K}^*h$. It follows that $\mathcal{P}_{N}^*\mathcal{K}_{N}^*\mathcal{P}_{N}$ converges strongly to $\mathcal{K}^*$ as $N\rightarrow\infty$. Since $\lim_{M\rightarrow\infty}\Psi\mathbb{K}^{-l}\pmb{g}_N=\mathcal{P}_{N}^*(\mathcal{K}_{N}^*)^l\mathcal{P}_{N}g_N$, we argue as before to prove~\eqref{limit2} holds.
\end{proof}

\begin{remark}[Computing suitable $\pmb{g}_N$ and the Koopman mode decomposition]\label{KMD_rem}
Given $g\in L^2(\Omega,\omega)$, we can compute a suitable $\pmb{g}_N$ in~\cref{lemma_SOT_conv} via
$$\setlength\abovedisplayskip{4pt}\setlength\belowdisplayskip{4pt}
\pmb{g}_{N}=G^{-1}\Psi_X^*W\begin{pmatrix}g(\pmb{x}^{(1)})&\cdots&g(\pmb{x}^{(M)})\end{pmatrix}^\top\in\mathbb{C}^{N}.
$$
If the quadrature rule converges, $\Psi\pmb{g}_{N}$ converges to $\mathcal{P}_{V_{N}}g$ in the large data limit and $\lim_{N\rightarrow\infty}\|g-\mathcal{P}_{V_{N}}g\|=0$ under the first condition of the lemma. We obtain
$$\setlength\abovedisplayskip{4pt}\setlength\belowdisplayskip{4pt}
\label{gen_kp_m_decm}
[\mathcal{P}_{V_N}g](\pmb{x})\approx \Psi(\pmb{x})V\left[V^{-1}(\sqrt{W}\Psi_X)^\dagger \sqrt{W}\begin{pmatrix}
g(\pmb{x}^{(1)})&\cdots&
g(\pmb{x}^{(M)})
\end{pmatrix}^\top\right].
$$
Hence, we have the approximate factorization
\begin{equation}\setlength\abovedisplayskip{4pt}\setlength\belowdisplayskip{4pt}
\begin{split}
g(\pmb{x}_n)&\approx \Psi(\pmb{x}_0)\mathbb{K}^nV \left[V^{-1}(\sqrt{W}\Psi_X)^\dagger \sqrt{W}\begin{pmatrix}g(\pmb{x}^{(1)})&\cdots&
g(\pmb{x}^{(M)})\end{pmatrix}^\top\right]\\
&=\left[\Psi(\pmb{x}_0)V\right]\Lambda^n \left[V^{-1}(\sqrt{W}\Psi_X)^\dagger \sqrt{W}\begin{pmatrix}g(\pmb{x}^{(1)})&\cdots&
g(\pmb{x}^{(M)})\end{pmatrix}^\top\right].
\end{split}
\label{LIP_pred}
\end{equation}
The factor $\Psi V$ is a quasimatrix of approximate Koopman eigenfunctions. The columns of the final factor in square brackets are known as Koopman modes~\cite{mezic2005spectral}. The first part of~\cref{lemma_SOT_conv} shows the convergence of this approximation. $\hfill\blacksquare$
\end{remark}
Using~\cref{lemma_SOT_conv}, we now show that $\Psi\mathcal{E}_{N,M}$ converges weakly to $\mathcal{E}$ if $\mathcal{K}$ is unitary. For example, if $F$ is invertible and measure-preserving, $\mathcal{K}$ is unitary.

\begin{theorem}
\label{thm_weak_conv2}
Suppose that $\lim_{N\rightarrow\infty}\mathrm{dist}(h,V_{N})=0$ for all $h\in L^2(\Omega,\omega)$,~\eqref{quad_convergence} holds, $\mathcal{K}$ is unitary and that $\phi:\mathbb{T}\rightarrow\mathbb{R}$ is Lipschitz continuous. Then for any $g\in L^2(\Omega,\omega)$ and $\pmb{g}_N\in\mathbb{C}^N$ with $\lim_{N\rightarrow\infty}\|g-\Psi \pmb{g}_N\|=0$,
$$\setlength\abovedisplayskip{4pt}\setlength\belowdisplayskip{4pt}
\lim_{N\rightarrow\infty}\limsup_{M\rightarrow\infty}\left\|\int_{\mathbb{T}} \phi(\lambda)\,d\mathcal{E}(\lambda)g-\Psi \int_{\mathbb{T}}\phi(\lambda)\, d\mathcal{E}_{N,M}(\lambda) \pmb{g}_N\right\|=0.
$$
\end{theorem}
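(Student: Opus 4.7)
The plan is to combine the quantitative bound in \cref{thm_weak_conv} with the strong operator convergence in \cref{lemma_SOT_conv}, and then carefully choose the order of the three limits $L \to \infty$, $N \to \infty$, $M \to \infty$. By replacing $\phi$ with $\phi/K$ where $K$ is its Lipschitz constant (both sides of the desired identity are linear in $\phi$), I may assume without loss of generality that the Lipschitz-$1$ hypothesis of \cref{thm_weak_conv} holds. Applying that bound with $\pmb{g} = \pmb{g}_N$ gives, for every $L \in \mathbb{N}$,
$$
\mathrm{LHS}_{N,M} \leq C\Bigl(\tfrac{\log L}{L}\bigl[\|g\| + \alpha_{N,M}\bigr] + \|g - \Psi \pmb{g}_N\|\,\|\phi\|_\infty + \beta^{(L)}_{N,M}\Bigr),
$$
where $\alpha_{N,M} := \|\Psi G^{-1/2}\|\,\|G^{1/2}\pmb{g}_N\|_2$ and $\beta^{(L)}_{N,M}$ denotes the finite sum of the $2L$ forward- and backward-residual norms appearing in \cref{thm_weak_conv}.

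Next I would bound $\alpha_{N,M}$ uniformly. Under \eqref{quad_convergence}, the quadratic form $\pmb{c}^* G \pmb{c}$ converges to $\|\Psi \pmb{c}\|^2$ for every fixed $\pmb{c} \in \mathbb{C}^N$, so with $N$ fixed and $M \to \infty$ the quasimatrix $\Psi G^{-1/2}$ becomes an orthonormal basis of $V_N$ in $L^2(\Omega,\omega)$ and its operator norm tends to $1$, while $\|G^{1/2}\pmb{g}_N\|_2 \to \|\Psi \pmb{g}_N\|$. Combining these with the assumption $\Psi \pmb{g}_N \to g$ yields $\limsup_{M\to\infty}\alpha_{N,M} \to \|g\|$ as $N \to \infty$; in particular $\limsup_{M\to\infty}\alpha_{N,M} \leq 2\|g\|$ for all sufficiently large $N$.

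With these ingredients in place, given $\varepsilon > 0$ I would first choose $L$ so large that $C(\log L / L)\cdot 3\|g\| < \varepsilon/2$. This choice depends only on $\varepsilon$ and $\|g\|$, and importantly not on $N$ or $M$. For this fixed $L$, \cref{lemma_SOT_conv} (whose second conclusion, invoking the unitarity of $\mathcal{K}$, is what controls the $\mathbb{K}^{-l}$ residuals) gives $\lim_{N\to\infty}\limsup_{M\to\infty}\beta^{(L)}_{N,M} = 0$, since $\beta^{(L)}_{N,M}$ is a \emph{finite} sum of $2L$ residual norms to which the lemma applies termwise. Together with $\|g - \Psi \pmb{g}_N\| \to 0$ and the uniform bound on $\alpha_{N,M}$, this gives $\lim_{N\to\infty}\limsup_{M\to\infty}\mathrm{LHS}_{N,M} \leq \varepsilon$, and sending $\varepsilon \to 0$ completes the proof.

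The main subtlety, which is also the reason the result does not follow from \cref{thm_weak_conv} alone, is the ordering of the three limits: freezing $L$ before passing $N,M\to\infty$ is legitimate only because the prefactor multiplying $\log L / L$ remains uniformly bounded in $(N,M)$ for large $N,M$, and because the sum $\beta^{(L)}_{N,M}$ has a fixed finite length once $L$ is chosen. The unitarity of $\mathcal{K}$ is essential here; without it \cref{lemma_SOT_conv} delivers only the forward residuals, and the negative-power terms in the estimate of \cref{thm_weak_conv} would be uncontrolled.
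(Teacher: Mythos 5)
Your proposal is correct and follows essentially the same route as the paper: apply the quantitative bound of \cref{thm_weak_conv} with $\pmb{g}=\pmb{g}_N$, note that $\lim_{M\to\infty}\|\Psi G^{-1/2}\|=1$ and $\lim_{M\to\infty}\|G^{1/2}\pmb{g}_N\|_2=\|\Psi\pmb{g}_N\|$ so the prefactor of $\log(L)/L$ stays bounded, and kill the finite sum of forward and backward residuals termwise via both parts of \cref{lemma_SOT_conv}, the second requiring unitarity of $\mathcal{K}$. Your reorganization of the limits (fixing $L$ in terms of $\varepsilon$ before sending $N,M\to\infty$, rather than taking the limits for arbitrary $L$ and then letting $L\to\infty$) is an equivalent bookkeeping of the same argument.
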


\begin{proof}
By rescaling, we may assume without loss of generality that the Lipschitz constant of $\phi$ is bounded by $1$. We use the bound in~\cref{thm_weak_conv}, replacing $\pmb{g}$ by $\pmb{g}_N$. We have $\lim_{M\rightarrow\infty}\|\Psi G^{-1/2}\|=1$ and $\lim_{M\rightarrow\infty}\|G^{1/2}\pmb{g}_{N}\|_2=\|\Psi \pmb{g}_N\|$. Since $\lim_{N\rightarrow\infty}\|g-\Psi \pmb{g}_N\|=0$, it follows that for any $L\in\mathbb{N}$,
\begin{align*}\setlength\abovedisplayskip{4pt}\setlength\belowdisplayskip{4pt}
&\limsup_{N\rightarrow\infty}\limsup_{M\rightarrow\infty}\left\|\int_{\mathbb{T}} \phi(\lambda)\,d\mathcal{E}(\lambda)g-\Psi \int_{\mathbb{T}}\phi(\lambda)\, d\mathcal{E}_{N,M}(\lambda) \pmb{g}_N\right\|\\
&\quad\quad\lesssim \frac{\log(L)}{L}\|g\|+\limsup_{N\rightarrow\infty}\limsup_{M\rightarrow\infty}\sum_{1\leq l \leq L}\!\frac{1}{l}\!\left[\|\mathcal{K}^lg-\Psi\mathbb{K}^l\pmb{g}_N\|+\|(\mathcal{K}^*)^lg-\Psi\mathbb{K}^{-l}\pmb{g}_N\|\right].
\end{align*}
Since $L\in\mathbb{N}$ is arbitrary, to prove the theorem it is enough to show that
$$\setlength\abovedisplayskip{4pt}\setlength\belowdisplayskip{4pt}
\limsup_{N\rightarrow\infty}\limsup_{M\rightarrow\infty} \|\mathcal{K}^lg-\Psi\mathbb{K}^l\pmb{g}_N\|+\|(\mathcal{K}^*)^lg-\Psi\mathbb{K}^{-l}\pmb{g}_N\|=0,\quad \forall l\in\mathbb{N}.
$$
This follows from~\cref{lemma_SOT_conv}.
\end{proof}

\subsection{Warning example}\label{sec:shift_example}

In general, we cannot drop the condition that $\mathcal{K}$ is unitary from~\cref{thm_weak_conv2}. For example, consider $\Omega=\mathbb{N}$, the counting measure $\omega$,
$$\setlength\abovedisplayskip{4pt}\setlength\belowdisplayskip{4pt}
\text{and}\qquad x_{n+1} = F(x_{n})\qquad \text{with} \qquad F(x) =\begin{cases}
x - 1,&\quad \text{if $x>1$}\\
0,&\quad \text{otherwise.}
\end{cases}
$$
Let $V_N=\mathrm{span}\{e_1,\ldots, e_N\}$, where $e_k(j)=\delta_{k,j}$. We can choose a quadrature rule with nodes $\{1,\ldots,M\}$ (with $M\geq N$) and weights $w_m=1$, so that 
$$\setlength\abovedisplayskip{4pt}\setlength\belowdisplayskip{4pt}
\mathbb{K}_{\mathrm{EDMD}}=\lim_{M\rightarrow\infty}\mathbb{K}_{\mathrm{EDMD}}=\begin{pmatrix}
0 &&&\\
1 & \ddots && \\
& \ddots &\ddots& \\
& & 1 & 0
\end{pmatrix},\quad\mathbb{K}=\lim_{M\rightarrow\infty}\mathbb{K}=\begin{pmatrix}
0 &&&1\\
1 & \ddots && \\
& \ddots &\ddots& \\
& & 1 & 0
\end{pmatrix}.
$$
Let $\phi(\lambda)=1/\lambda$, then from the proof of~\cref{thm_weak_conv},
$$\setlength\abovedisplayskip{4pt}\setlength\belowdisplayskip{4pt}
\int_{\mathbb{T}} \phi(\lambda)\,d\mathcal{E}(\lambda)e_1=\mathcal{K}^* e_1=0,\qquad \lim_{M\rightarrow\infty}\Psi \int_{\mathbb{T}}\phi(\lambda)\, d\mathcal{E}_{N,M}(\lambda) e_1=e_N,
$$
where we also use $e_1$ to denote the first canonical basis vector of $\mathbb{C}^N$. Clearly $e_N$ does not converge to $e_1$ in $L^2(\Omega,\omega)$. Note also that for this example, $\mathbb{K}_{\mathrm{EDMD}}$ is not even diagonalizable.\footnote{This issue, as well as spectral pollution and the absence of spectral inclusion, also hold for the natural extension of this example (the bi-lateral shift) on $\Omega=\mathbb{Z}$, for which $\mathcal{K}$ is unitary.} This kind of behavior is by no means rare. In fact, this example is connected to many dynamical systems, such as Bernoulli shifts, with so-called Lebesgue spectrum~\cite[Chapter 2]{arnold1968ergodic}. However, $e_N$ does converge weakly to $0$ in $L^2(\Omega,\omega)$. Motivated by this, we remove the need for $\mathcal{K}$ to be unitary when considering scalar-valued spectral measures in the next subsection.

\subsection{Approximation of scalar-valued spectral measures}

Let $g\in L^2(\Omega,\omega)$ with $\|g\|=1$. We approximate $\mu_g$ (see~\cref{sec:spec_meas_def_jkhkhjk}) by $\smash{\mu_{\pmb{g}}^{(N,M)}}$, where
\begin{equation}\setlength\abovedisplayskip{4pt}\setlength\belowdisplayskip{4pt}
\label{muNM_def}
\smash{\mu_{\pmb{g}}^{(N,M)}(U)}=\pmb{g}^*G\mathcal{E}_{N,M}(U)\pmb{g}=\sum_{\lambda_j\in U}|v_j^*G\pmb{g}|^2
\end{equation}
and $\pmb{g}$ is normalized so that $\pmb{g}^*G\pmb{g}=1$. Since $\{G^{1/2}v_j\}_{j=1}^N$ is an orthonormal basis for $\mathbb{C}^N$, $\smash{\mu_{\pmb{g}}^{(N,M)}}$ is a probability measure. To measure the distance between probability measures, we use the Wasserstein metric. For two Borel probability measures $\mu$ and $\nu$ on $\mathbb{T}$, the $W_1$ distance is defined as
$$\setlength\abovedisplayskip{4pt}\setlength\belowdisplayskip{4pt}
W_1(\mu,\nu)=\sup\left\{\int_{\mathbb{T}}\phi(\lambda)\,d(\mu-\nu)(\lambda):\phi:\mathbb{T}\rightarrow\mathbb{R}\text{ Lip. cts., Lip. constant $\leq1$}\right\}.
$$
Convergence in this metric is equivalent to the usual weak convergence of measures. The following theorem provides an explicit bound on $W_1(\mu_g,\mu_{\pmb{g}}^{(N,M)})$.

\begin{theorem}
\label{thm_weak_conv_scalar}
For any $L\in\mathbb{N}$, $g\in L^2(\Omega,\omega)$ and $\pmb{g}\in\mathbb{C}^N$ with $\pmb{g}^*G\pmb{g}=1$,
$$\setlength\abovedisplayskip{4pt}\setlength\belowdisplayskip{4pt}
W_1\!\left(\mu_g,\smash{\mu_{\pmb{g}}^{(N,M)}}\right)\!\leq \! C\Big(\frac{\log(L)}{L}+\!\!\!\sum_{1\leq l \leq L}\!\!\!\frac{| \langle\mathcal{K}^{|l|}g,g\rangle\!-\!\pmb{g}^*G\mathbb{K}^{|l|}\pmb{g}|}{l}\Big),\text{ for a universal constant $C$.}
$$
\end{theorem}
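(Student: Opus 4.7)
The plan is to follow the same blueprint as the proof of Theorem \ref{thm_weak_conv}: approximate the test function $\phi$ by a truncated Fourier (Laurent) series $S_L\phi$ and identify the low-order Fourier coefficients of $\mu_g$ and $\mu_{\pmb{g}}^{(N,M)}$ with moments $\langle\mathcal{K}^l g,g\rangle$ and $\pmb{g}^*G\mathbb{K}^l\pmb{g}$ respectively. By the definition of $W_1$, it suffices to bound $|\int_{\mathbb{T}}\phi\,d(\mu_g-\mu_{\pmb{g}}^{(N,M)})|$ uniformly over $\phi$ Lipschitz with constant $\leq 1$.

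First I would verify that the moments of both measures are conjugate-symmetric. For $\mu_g$ this is already in the proof of \cref{spec_meas_indep}: the $n$-th Fourier moment equals $\langle\mathcal{K}^n g,g\rangle$ for $n\geq 0$ and $\langle g,\mathcal{K}^{-n}g\rangle = \overline{\langle\mathcal{K}^{-n}g,g\rangle}$ for $n<0$. For $\mu_{\pmb{g}}^{(N,M)}$, the output of \cref{alg:mp_EDMD} satisfies $\mathbb{K}^*G\mathbb{K}=G$ (direct computation from $\mathbb{K}=G^{-1/2}U_2U_1^*G^{1/2}$), so $\mathbb{K}^{-1}=G^{-1}\mathbb{K}^*G$ and hence $\pmb{g}^*G\mathbb{K}^{-l}\pmb{g}=\overline{\pmb{g}^*G\mathbb{K}^l\pmb{g}}$. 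Combined with the spectral representation in \eqref{muNM_def} and diagonalization of $\mathbb{K}$, one sees that the $l$-th Fourier moment of $\mu_{\pmb{g}}^{(N,M)}$ is exactly $\pmb{g}^*G\mathbb{K}^l\pmb{g}$ for $l\geq 0$ (and its conjugate for $l<0$).

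Next I would split $\phi=S_L\phi+(\phi-S_L\phi)$ and handle the two terms separately. Because both $\mu_g$ and $\mu_{\pmb{g}}^{(N,M)}$ are probability measures with total mass $1$, the tail term satisfies
$$
\left|\int_{\mathbb{T}}(\phi-S_L\phi)(\lambda)\,d(\mu_g-\mu_{\pmb{g}}^{(N,M)})(\lambda)\right|\leq 2\|\phi-S_L\phi\|_\infty\lesssim \frac{\log(L)}{L},
$$
using Jackson's theorem as in \eqref{W1b}; note that subtracting the constant $\phi(0)$ from $\phi$ leaves both integrals unchanged since the constant cancels, so the bound on $\|\hat\phi-S_L\hat\phi\|_\infty$ applies. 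For the polynomial part, the zeroth coefficient contributes zero (both masses equal $1$), and for $1\leq|l|\leq L$, using $|c_l|\leq C/|l|$ and the conjugate symmetry of the moments, I pair up $\pm l$ terms to obtain
$$
\left|\int_{\mathbb{T}}S_L\phi(\lambda)\,d(\mu_g-\mu_{\pmb{g}}^{(N,M)})(\lambda)\right|\leq C\sum_{1\leq l\leq L}\frac{|\langle\mathcal{K}^l g,g\rangle-\pmb{g}^*G\mathbb{K}^l\pmb{g}|}{l}.
$$
Summing the two estimates and taking the supremum over admissible $\phi$ yields the bound claimed in \cref{thm_weak_conv_scalar}.

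The only subtle point, which I regard as the main thing to get right, is the conjugate-symmetry step: one must verify that $\mathbb{K}$ truly behaves like a unitary with respect to the $G$-inner product so that negative-index moments of $\mu_{\pmb{g}}^{(N,M)}$ are conjugates of positive-index ones. This lets the sum in $\int S_L\phi\,d(\mu_g-\mu_{\pmb{g}}^{(N,M)})$ be indexed only by $1\leq l\leq L$ (with the absolute value inside), matching the theorem statement. The remainder is routine Fourier approximation.
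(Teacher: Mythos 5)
Your proposal is correct and follows essentially the same route as the paper's proof: truncate $\phi$ by its Laurent/Fourier partial sum $S_L\phi$, identify the moments of $\mu_g$ and $\smash{\mu_{\pmb{g}}^{(N,M)}}$ with $\langle\mathcal{K}^{|l|}g,g\rangle$ and $\pmb{g}^*G\mathbb{K}^{|l|}\pmb{g}$ (using conjugate symmetry for negative indices), and use that both are probability measures together with Jackson's theorem for the tail. Your derivation of the conjugate symmetry via $\mathbb{K}^*G\mathbb{K}=G$ is a minor cosmetic variant of the paper's use of the unitary factor $U_2U_1^*$, not a different argument.
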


\begin{proof}
The proof is almost identical to that of~\cref{thm_weak_conv}. Let $\phi:\mathbb{T}\rightarrow\mathbb{R}$ be Lipschitz continuous with Lipschitz constant bounded by $1$. Since $\mu_g$ and $\mu_{\pmb{g}}^{(N,M)}$ are probability measures, we may assume from the start that $\phi(0)=0$.
Moreover,
$$\setlength\abovedisplayskip{4pt}\setlength\belowdisplayskip{4pt}
\int_{\mathbb{T}} \lambda^l \, d\mu_g(\lambda)=\begin{cases}\langle\mathcal{K}^lg,g\rangle,&\quad \text{if }l\geq 0,\\
\langle(\mathcal{K}^{-l})^*g,g\rangle=\langle g,\mathcal{K}^{-l}g\rangle,&\quad \text{otherwise}
\end{cases}
$$
and $
\pmb{g}^*G \int_{\mathbb{T}}\lambda^l\, d\smash{\mu_{\pmb{g}}^{(N,M)}}(\lambda) \pmb{g}=\pmb{g}^*G\mathbb{K}^l\pmb{g}=\pmb{g}^*G^{1/2}[U_2U_1^*]^l G^{1/2}\pmb{g}.$ In particular, if $l<0$, then $
\pmb{g}^*G \int_{\mathbb{T}}\lambda^l\, d\smash{\mu_{\pmb{g}}^{(N,M)}}(\lambda) \pmb{g}=\overline{\pmb{g}^*G\mathbb{K}^{|l|}\pmb{g}}.$ It follows that
$$\setlength\abovedisplayskip{4pt}\setlength\belowdisplayskip{4pt}
\left|\int_{\mathbb{T}} \lambda^l \, d(\mu_g- \smash{\mu_{\pmb{g}}^{(N,M)}})(\lambda)\right|=\left| \langle\mathcal{K}^{|l|}g,g\rangle-\pmb{g}^*G\mathbb{K}^{|l|}\pmb{g}\right|,\quad \forall l\in\mathbb{Z}.
$$
Arguing as in the proof of~\cref{thm_weak_conv}, it follows that
\begin{equation}\label{W2a}\setlength\abovedisplayskip{4pt}\setlength\belowdisplayskip{4pt}
\left|\int_{\mathbb{T}} S_L\phi(\lambda) \, d(\mu_g- \smash{\mu_{\pmb{g}}^{(N,M)}})(\lambda)\right|\lesssim \sum_{1\leq l \leq L}\frac{1}{l}\left| \langle\mathcal{K}^{|l|}g,g\rangle-\pmb{g}^*G\mathbb{K}^{|l|}\pmb{g}\right|.
\end{equation}
Since $\mu_g$ and $\smash{\mu_{\pmb{g}}^{(N,M)}}$ are probability measures and $\|\phi-S_L\phi\|_{\infty}\lesssim \log(L)/L$,
\begin{equation}\label{W2b}\setlength\abovedisplayskip{4pt}\setlength\belowdisplayskip{4pt}
\left|\int_{\mathbb{T}} (\phi-S_L\phi)(\lambda) \, d(\mu_g- \smash{\mu_{\pmb{g}}^{(N,M)}})(\lambda)\right|\lesssim \frac{\log(L)}{L}.
\end{equation}
The result follows by combining~\eqref{W2a} and~\eqref{W2b} and taking suprema over such $\phi$.
\end{proof}

\cref{thm_weak_conv_scalar} and the first part of~\cref{lemma_SOT_conv}, show the following corollary.

\begin{corollary}\label{mug_conv}
Suppose that $\lim_{N\rightarrow\infty}\mathrm{dist}(h,V_{N})=0$ for all $h\in L^2(\Omega,\omega)$, and~\cref{quad_convergence} holds. Then for any $g\in L^2(\Omega,\omega)$ and $\pmb{g}_N\in\mathbb{C}^N$ with $\lim_{N\rightarrow\infty}\!\!\|g-\Psi \pmb{g}_N\|=0$,
\begin{equation}\label{weak_weak_convergence2}\setlength\abovedisplayskip{4pt}\setlength\belowdisplayskip{4pt}
\lim_{N\rightarrow\infty}\limsup_{M\rightarrow\infty}W_1\left(\mu_g,\smash{\mu_{\pmb{g}}^{(N,M)}}\right)=0.
\end{equation}
\end{corollary}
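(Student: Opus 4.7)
The plan is to deduce \eqref{weak_weak_convergence2} by feeding the strong convergence supplied by the first half of Lemma~\ref{lemma_SOT_conv} into the explicit moment-by-moment bound of Theorem~\ref{thm_weak_conv_scalar}. Because that theorem requires the normalization $\pmb{g}^*G\pmb{g}=1$, I would first set $\tilde{\pmb{g}}_N=\pmb{g}_N/\sqrt{\pmb{g}_N^*G\pmb{g}_N}$; under \eqref{quad_convergence} we have $\pmb{g}_N^*G\pmb{g}_N\to\|\Psi\pmb{g}_N\|^2$ as $M\to\infty$, and the hypothesis $\|g-\Psi\pmb{g}_N\|\to 0$ (together with the implicit $\|g\|=1$ used to define $\mu_g$) forces this to tend to $1$. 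Hence renormalizing does not change the limit of $W_1$, and it suffices to prove the statement for $\tilde{\pmb{g}}_N$.

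Applying Theorem~\ref{thm_weak_conv_scalar} to $\tilde{\pmb{g}}_N$ yields, for every $L\in\mathbb{N}$,
\begin{equation*}
W_1\!\left(\mu_g,\mu_{\tilde{\pmb{g}}_N}^{(N,M)}\right)\leq C\!\left(\frac{\log L}{L}+\sum_{l=1}^{L}\frac{|\langle\mathcal{K}^l g,g\rangle-\tilde{\pmb{g}}_N^*G\mathbb{K}^l\tilde{\pmb{g}}_N|}{l}\right).
\end{equation*}
The heart of the argument is to show that each moment difference vanishes in the double limit, i.e.,
\begin{equation*}
\lim_{N\to\infty}\limsup_{M\to\infty}|\langle\mathcal{K}^l g,g\rangle-\tilde{\pmb{g}}_N^*G\mathbb{K}^l\tilde{\pmb{g}}_N|=0\quad\text{for each fixed }l\in\mathbb{N}.
\end{equation*}
I would split this into two steps. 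First, observe that $\tilde{\pmb{g}}_N^*G\mathbb{K}^l\tilde{\pmb{g}}_N$ is literally the quadrature sum $\sum_m w_m\overline{[\Psi\tilde{\pmb{g}}_N](\pmb{x}^{(m)})}[\Psi\mathbb{K}^l\tilde{\pmb{g}}_N](\pmb{x}^{(m)})$, so expanding in the basis $\{\psi_j\}_{j=1}^N$ shows it differs from $\langle\Psi\mathbb{K}^l\tilde{\pmb{g}}_N,\Psi\tilde{\pmb{g}}_N\rangle$ by a linear combination of $G_{jk}-\langle\psi_k,\psi_j\rangle$ with coefficients bounded uniformly in $M$ (since $\mathbb{K}$ is similar to a unitary by Proposition~\ref{prop_basic}, passing to subsequences if $\mathbb{K}$ is not unique). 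Thus \eqref{quad_convergence} kills this discrepancy as $M\to\infty$.

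Second, it remains to show $\langle\Psi\mathbb{K}^l\tilde{\pmb{g}}_N,\Psi\tilde{\pmb{g}}_N\rangle\to\langle\mathcal{K}^l g,g\rangle$ in the double limit. This follows from the Cauchy--Schwarz splitting
\begin{equation*}
\langle\Psi\mathbb{K}^l\tilde{\pmb{g}}_N,\Psi\tilde{\pmb{g}}_N\rangle-\langle\mathcal{K}^l g,g\rangle=\langle\Psi\mathbb{K}^l\tilde{\pmb{g}}_N-\mathcal{K}^l g,\Psi\tilde{\pmb{g}}_N\rangle+\langle\mathcal{K}^l g,\Psi\tilde{\pmb{g}}_N-g\rangle,
\end{equation*}
uniform boundedness of $\Psi\tilde{\pmb{g}}_N$, and the two convergences $\|\Psi\tilde{\pmb{g}}_N-g\|\to 0$ and $\lim_N\limsup_M\|\mathcal{K}^l g-\Psi\mathbb{K}^l\tilde{\pmb{g}}_N\|=0$ from the first half of Lemma~\ref{lemma_SOT_conv}. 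With moment convergence in hand, taking $\limsup_M$ then $\limsup_N$ in the displayed bound leaves $C\log L/L$ for every $L$, and sending $L\to\infty$ finishes the proof.

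The main obstacle is the careful ordering of the four limits (over $l\leq L$, then $M\to\infty$, then $N\to\infty$, and finally $L\to\infty$): for each fixed $L$ only finitely many moments appear, so interchanging the finite sum with the successive $\limsup$'s is harmless, while the outer $L\to\infty$ handles the Fourier tail of the Lipschitz test function uniformly. The delicate nontriviality is really encoded in Lemma~\ref{lemma_SOT_conv} (strong convergence of $\mathbb{K}^l$) — once that is available, everything downstream is bookkeeping.
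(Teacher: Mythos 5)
Your proposal is correct and follows essentially the same route as the paper, which deduces the corollary directly by combining the moment bound of \cref{thm_weak_conv_scalar} with the strong convergence in the first part of \cref{lemma_SOT_conv}; your Cauchy--Schwarz splitting, the quadrature limit $G\to(\langle\psi_k,\psi_j\rangle)$ for fixed $N$, and the final $L\to\infty$ step are exactly the details the paper leaves implicit. The renormalization $\tilde{\pmb{g}}_N=\pmb{g}_N/\sqrt{\pmb{g}_N^*G\pmb{g}_N}$ is also consistent with the normalization convention built into \cref{muNM_def}, so nothing further is needed.
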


A popular choice of dictionary is a Krylov subspace, i.e., $\mathrm{span}\{g,\mathcal{K}g,\ldots,\mathcal{K}^{N-1}g\}$. This corresponds to time-delay embedding, which is a popular method for DMD-type algorithms~\cite{arbabi2017ergodic,kamb2020time,pan2020structure}. %Delay embedding can be justified dynamically via so-called delay embedding theorems, such as Takens' theorem \cite[Chapter 9]{kantz2004nonlinear}.
Part (ii) of~\cref{prop_basic} shows that if $g,\mathcal{K}g,\ldots,\mathcal{K}^{l}g\in V_N$ and $g=\Psi\pmb{g}$, then $\lim_{M\rightarrow\infty}| \langle\mathcal{K}^{|l|}g,g\rangle-\pmb{g}^*G\mathbb{K}^{|l|}\pmb{g}|=0.$ Combining this with \cref{thm_weak_conv_scalar} shows the following corollary, which provides an explicit rate of convergence.

\begin{corollary}
\label{delay}
If $\{g,\mathcal{K}g,\ldots,\mathcal{K}^{L}g\}\subset V_N$, $g=\Psi\pmb{g}$, and~\cref{quad_convergence} holds, then
\begin{equation}\label{weak_weak_convergence3}\setlength\abovedisplayskip{4pt}\setlength\belowdisplayskip{4pt}
\lim_{M\rightarrow\infty}W_1\left(\mu_g,\smash{\mu_{\pmb{g}}^{(N,M)}}\right)\leq C{\log(L)}/{L},\quad \text{for a universal constant $C$.}
\end{equation}
\end{corollary}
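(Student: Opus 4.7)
The plan is to apply~\cref{thm_weak_conv_scalar} and then argue that, under the delay-embedding hypothesis $\{g,\mathcal{K}g,\ldots,\mathcal{K}^Lg\}\subset V_N$, each term in the finite sum of the bound vanishes in the large-data limit. Concretely,~\cref{thm_weak_conv_scalar} gives
\[
W_1\!\left(\mu_g,\smash{\mu_{\pmb{g}}^{(N,M)}}\right)\leq C\Big(\frac{\log(L)}{L}+\sum_{1\leq l\leq L}\frac{|\langle\mathcal{K}^lg,g\rangle-\pmb{g}^*G\mathbb{K}^l\pmb{g}|}{l}\Big),
\]
so it suffices to establish $\lim_{M\to\infty}|\langle\mathcal{K}^lg,g\rangle-\pmb{g}^*G\mathbb{K}^l\pmb{g}|=0$ for each $1\leq l\leq L$; taking $\limsup_{M\to\infty}$ then produces the stated $O(\log(L)/L)$ bound.

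For $0\leq l\leq L$, since $\mathcal{K}^lg\in V_N$ and $G$ is invertible, there is a unique coefficient vector $\pmb{g}_l\in\mathbb{C}^N$ with $\Psi\pmb{g}_l=\mathcal{K}^lg$, where $\pmb{g}_0=\pmb{g}$. I would prove by induction on $l$ that $\lim_{M\to\infty}\mathbb{K}^l\pmb{g}=\pmb{g}_l$. The base case is trivial. For the inductive step, apply~\cref{prop_basic} part (ii) to the stationary vector $\pmb{g}_l$, which is permitted because $\Psi\pmb{g}_l=\mathcal{K}^lg$ and $\mathcal{K}(\Psi\pmb{g}_l)=\mathcal{K}^{l+1}g\in V_N$; this gives $\lim_{M\to\infty}\Psi\mathbb{K}\pmb{g}_l=\mathcal{K}^{l+1}g=\Psi\pmb{g}_{l+1}$, hence $\mathbb{K}\pmb{g}_l\to\pmb{g}_{l+1}$ by injectivity of $\Psi$ on $V_N$. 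Splitting
\[
\|\mathbb{K}^{l+1}\pmb{g}-\pmb{g}_{l+1}\|_2\leq \|\mathbb{K}\|\,\|\mathbb{K}^l\pmb{g}-\pmb{g}_l\|_2+\|\mathbb{K}\pmb{g}_l-\pmb{g}_{l+1}\|_2
\]
and using the uniform bound $\|\mathbb{K}\|\leq\kappa(G^{1/2})$ from~\cref{prop_basic} part (iii) (stable as $M\to\infty$ by~\eqref{quad_convergence}) closes the induction. To finish, decompose
\[
\pmb{g}^*G\mathbb{K}^l\pmb{g}-\langle\mathcal{K}^lg,g\rangle=\pmb{g}^*G\,(\mathbb{K}^l\pmb{g}-\pmb{g}_l)+\bigl(\pmb{g}^*G\pmb{g}_l-\langle\Psi\pmb{g}_l,\Psi\pmb{g}\rangle\bigr);
\]
the second summand vanishes by bilinear extension of~\eqref{quad_convergence}, and the first vanishes by the induction together with boundedness of $\|G\|$. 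Substituting into~\cref{thm_weak_conv_scalar} and taking $\limsup_{M\to\infty}$ delivers the corollary.

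The main subtlety is the inductive step: because $\mathbb{K}$ itself depends on $M$, one cannot directly invoke~\cref{prop_basic} part (ii) on the moving vector $\mathbb{K}^l\pmb{g}$. The fix is to apply the invariance statement to the stationary limit $\pmb{g}_l$ and absorb the remaining error through the uniformly bounded operator norm of $\mathbb{K}$. A minor bookkeeping point is the probability-measure normalisation $\pmb{g}^*G\pmb{g}=1$ required by~\cref{thm_weak_conv_scalar}: this holds in the limit since $\pmb{g}^*G\pmb{g}\to\|g\|^2$ under~\eqref{quad_convergence}, and a rescaling by $(\pmb{g}^*G\pmb{g})^{1/2}$ absorbs any finite-$M$ discrepancy without affecting the $O(\log(L)/L)$ rate.
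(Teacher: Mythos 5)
Your proposal is correct and follows essentially the same route as the paper: the paper's own (one-line) proof likewise combines \cref{prop_basic} part (ii) with \cref{thm_weak_conv_scalar}, showing that each moment error $|\langle\mathcal{K}^{l}g,g\rangle-\pmb{g}^*G\mathbb{K}^{l}\pmb{g}|$ vanishes as $M\rightarrow\infty$ for $1\leq l\leq L$ and then reading off the $C\log(L)/L$ bound. Your induction on $l$, applying part (ii) to the fixed coefficient vectors $\pmb{g}_l$ rather than to the $M$-dependent iterates $\mathbb{K}^{l}\pmb{g}$, is a careful spelling-out of the step the paper leaves implicit, and is sound.
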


\subsection{Approximation of spectra}

We end this section with the convergence to the approximate point spectrum of $\mathcal{K}$. The following theorem shows that the eigenvalues computed by~\cref{alg:mp_EDMD} approximate the whole of $\sigma_{\mathrm{ap}}(\mathcal{K})$ as $N\rightarrow\infty$ and the subspace $V_N$ becomes richer.

\begin{theorem}
\label{lemma_spectra_conv}
If $\lim_{N\rightarrow\infty}\mathrm{dist}(h,V_{N})=0$ $\forall h\in L^2(\Omega,\omega)$ and~\eqref{quad_convergence} holds, then \begin{equation}
\label{spectra_conv_bound}\setlength\abovedisplayskip{4pt}\setlength\belowdisplayskip{4pt}
\lim_{N\rightarrow\infty}\limsup_{M\rightarrow\infty} \sup_{\lambda\in \sigma_{\mathrm{ap}}(\mathcal{K})}\mathrm{dist}(\lambda,\sigma(\mathbb{K}))=0.
\end{equation}
\end{theorem}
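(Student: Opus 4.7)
The plan is to exploit two structural facts: (i) by Proposition~\ref{prop_basic}, $\sigma(\mathbb{K})=\sigma(U_2U_1^*)$ with $U_2U_1^*$ unitary, so we can work entirely with a normal matrix, and (ii) Lemma~\ref{lemma_SOT_conv} provides strong-operator convergence of the truncated action of $\mathbb{K}$ to $\mathcal{K}$ in the iterated limit $\lim_N\limsup_M$. The strategy is to take any approximate eigenfunction of $\mathcal{K}$ for a given $\lambda\in\sigma_{\mathrm{ap}}(\mathcal{K})$, project it into $V_N$, and then convert the resulting approximate equation $\Psi(\mathbb{K}-\lambda)\pmb{g}_N\approx 0$ into a bound on $\mathrm{dist}(\lambda,\sigma(U_2U_1^*))$ via the normality of $U_2U_1^*$.

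First I would fix $\epsilon>0$ and use compactness of $\sigma_{\mathrm{ap}}(\mathcal{K})\subseteq\mathbb{T}$ to pick a finite $\epsilon$-net $\{\lambda_1,\ldots,\lambda_K\}$ in $\sigma_{\mathrm{ap}}(\mathcal{K})$. For each $k$, choose $h_k\in L^2(\Omega,\omega)$ with $\|h_k\|=1$ and $\|(\mathcal{K}-\lambda_k)h_k\|<\epsilon$. Using the density hypothesis $\mathrm{dist}(h_k,V_N)\to 0$, select $\pmb{g}_k^{(N)}\in\mathbb{C}^N$ with $\|h_k-\Psi\pmb{g}_k^{(N)}\|\to 0$. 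By the first part of Lemma~\ref{lemma_SOT_conv} (unitarity of $\mathcal{K}$ is not needed for $l=1$),
$$
\lim_{N\to\infty}\limsup_{M\to\infty}\|\mathcal{K}h_k-\Psi\mathbb{K}\pmb{g}_k^{(N)}\|=0,
$$
so the triangle inequality yields $\limsup_{M\to\infty}\|\Psi(\mathbb{K}-\lambda_k)\pmb{g}_k^{(N)}\|<2\epsilon$ for all sufficiently large $N$.

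Next I would translate this $L^2(\Omega,\omega)$-bound into a bound in the $G$-weighted Euclidean norm. Quadrature convergence~\eqref{quad_convergence} implies $\pmb{w}^*G\pmb{w}\to\|\Psi\pmb{w}\|^2$ for any fixed $\pmb{w}\in\mathbb{C}^N$, so for large enough $M$,
$$
\|G^{\frac{1}{2}}(\mathbb{K}-\lambda_k)\pmb{g}_k^{(N)}\|_2<3\epsilon,\qquad \|G^{\frac{1}{2}}\pmb{g}_k^{(N)}\|_2>\tfrac{1}{2}.
$$
Writing $\pmb{h}_k^{(N)}:=G^{1/2}\pmb{g}_k^{(N)}$ and using $G^{1/2}\mathbb{K}G^{-1/2}=U_2U_1^*$, this becomes $\|(U_2U_1^*-\lambda_k)\pmb{h}_k^{(N)}\|_2<3\epsilon$ with $\|\pmb{h}_k^{(N)}\|_2>\tfrac{1}{2}$. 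Since $U_2U_1^*$ is unitary and hence normal, the standard spectral inequality gives $\mathrm{dist}(\lambda_k,\sigma(U_2U_1^*))\leq 6\epsilon$, and because $\mathbb{K}$ is similar to $U_2U_1^*$, the same bound holds for $\mathrm{dist}(\lambda_k,\sigma(\mathbb{K}))$. The $\epsilon$-net then delivers $\sup_{\lambda\in\sigma_{\mathrm{ap}}(\mathcal{K})}\mathrm{dist}(\lambda,\sigma(\mathbb{K}))\leq 7\epsilon$ in the iterated limit, and letting $\epsilon\downarrow 0$ finishes the proof.

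The main obstacle is organising the iterated limit: the $\epsilon$-net, the choice of $h_k$, and the choice of $\pmb{g}_k^{(N)}$ are all made \emph{before} letting $N$ and then $M$ tend to infinity, and each of the three approximation errors (quadrature, projection into $V_N$, and Lemma~\ref{lemma_SOT_conv}) must be controlled in the correct order. A potentially subtle point is that $\mathbb{K}$ need not be unique when $\Sigma$ is degenerate, but the bound obtained is uniform over all limit points, so the $\limsup_M$ formulation absorbs any such ambiguity. Uniformity across $\sigma_{\mathrm{ap}}(\mathcal{K})$ is automatic once the finite net is fixed, which is why the compactness of $\sigma_{\mathrm{ap}}(\mathcal{K})\subset\mathbb{T}$ is crucial here.
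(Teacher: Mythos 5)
Your proposal is correct and follows essentially the same route as the paper's proof: a compactness ($\epsilon$-net) argument over $\sigma_{\mathrm{ap}}(\mathcal{K})$, approximate eigenfunctions pushed into $V_N$, the first part of Lemma~\ref{lemma_SOT_conv}, and the normality of the measure-preserving truncation to turn a small residual into a bound on $\mathrm{dist}(\lambda_k,\sigma(\mathbb{K}))$. The only executional difference is that you invoke normality for the finite-$M$ unitary matrix $U_2U_1^*$ (which requires the small extra observation that $(\mathbb{K}-\lambda_k)\pmb{g}_k^{(N)}$ varies with $M$ but remains uniformly bounded, so the quadrature step $\pmb{w}^*G\pmb{w}\to\|\Psi\pmb{w}\|^2$ still applies), whereas the paper first passes along a subsequence to the limit operator $\mathcal{K}_N$ on $V_N$ and uses its unitarity; the underlying idea is identical.
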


\begin{proof}
To prove~\eqref{spectra_conv_bound}, we may assume without loss of generality, by taking subsequences if necessary, that the large data limit $\lim_{M\rightarrow\infty}\mathbb{K}$ exists for each fixed $N$. Let $\mathcal{K}_{N}$ denote the operator on $V_{N}$ represented by $\lim_{M\rightarrow\infty}\mathbb{K}$.

Let $\delta>0$ and let $\{z_1,\ldots,z_k\}\subset \sigma_{\mathrm{ap}}(\mathcal{K})$ be such that $\mathrm{dist}(\lambda,\{z_1,\ldots,z_k\})\leq \delta$ for any $\lambda\in \sigma_{\mathrm{ap}}(\mathcal{K})$. Such a $\delta$-net exists since $\sigma_{\mathrm{ap}}(\mathcal{K})$ is compact. For $j=1,\ldots,k$ there exists $g_j\in L^2(\Omega,\omega)$ of norm $1$ such that $\|(\mathcal{K}-z_j)g_j\|\leq \delta$. Since $\lim_{N\rightarrow\infty}\mathrm{dist}(h,V_{N})=0$ for any $h\in L^2(\Omega,\omega)$, we may choose $g_{j,N}=\Psi \pmb{g}_{j,N}\in V_N$, each of norm $1$, such that $\lim_{N\rightarrow\infty}\|g_j-g_{j,N}\|=0$ for $j=1,\ldots,k$. Using the first part of~\cref{lemma_SOT_conv},
$$\setlength\abovedisplayskip{4pt}\setlength\belowdisplayskip{4pt}
\limsup_{N\rightarrow\infty} \|(\mathcal{K}_N-z_j)g_{j,N}\|=\limsup_{N\rightarrow\infty} \lim_{M\rightarrow\infty}\|\Psi(\mathbb{K}-z_j)\pmb{g}_{j,N}\|= \|(\mathcal{K}-z_j)g_{j}\|\leq\delta.
$$
Since $\mathcal{K}_N$ is unitary, $\limsup_{N\rightarrow\infty}\mathrm{dist}(z_j,\sigma(\mathcal{K}_N))\leq\delta$ and hence
$$\setlength\abovedisplayskip{4pt}\setlength\belowdisplayskip{4pt}
\limsup_{N\rightarrow\infty}\limsup_{M\rightarrow\infty}\mathrm{dist}(z_j,\sigma(\mathbb{K}))=\limsup_{N\rightarrow\infty}\mathrm{dist}(z_j,\sigma(\mathcal{K}_N))\leq\delta.
$$
Since $\sup_{\lambda\in \sigma_{\mathrm{ap}}(\mathcal{K})}\mathrm{dist}(\lambda,\sigma(\mathbb{K}))\leq \sup_{j=1,\ldots,k}\mathrm{dist}(z_j,\sigma(\mathbb{K}))+\delta$, we have
$$\setlength\abovedisplayskip{4pt}\setlength\belowdisplayskip{4pt}
\limsup_{N\rightarrow\infty}\limsup_{M\rightarrow\infty} \sup_{\lambda\in \sigma_{\mathrm{ap}}(\mathcal{K})}\mathrm{dist}(\lambda,\sigma(\mathbb{K}))\leq 2\delta.
$$
Since $\delta>0$ was arbitrary, the theorem follows.
\end{proof}

Despite this result, $\sigma(\mathbb{K})$ can suffer from spectral pollution. That is, eigenvalues of $\mathbb{K}$ may approximate points that are not in the spectrum of $\mathcal{K}$. We can avoid spectral pollution by computing residuals and discarding eigenpairs with a large residual. Suppose that~\eqref{quad_convergence} holds, $v\in\mathbb{C}^N$, and $\lambda\in\mathbb{C}$. Since $\mathcal{K}^*\mathcal{K}$ is the identity,
\begin{equation}\setlength\abovedisplayskip{4pt}\setlength\belowdisplayskip{4pt}
\label{residual_bound}
\|(\mathcal{K}\!-\!\lambda)\Psi v\|\!=\!\sqrt{\langle (\mathcal{K}\!-\!\lambda)\Psi v,(\mathcal{K}\!-\!\lambda)\Psi v \rangle}\!=\!\lim_{M\rightarrow\infty}\!\! \sqrt{v^*[(1+|\lambda|^2)G-\overline{\lambda}A-\lambda A^*]v}.\!\!\!\!
\end{equation}
Since $\mathcal{K}$ is an isometry, this residual provides a good error estimate. In particular, if $v$ is normalized so that $\lim_{M\rightarrow\infty}\|G^{1/2}v\|_2=1$, then $\|(\mathcal{K}-\lambda)\Psi v\|\geq\mathrm{dist}(\lambda,\sigma(\mathcal{K}))$.

\section{Numerical examples}
\label{sec:numerical_examples}

We consider three numerical examples, two with data from numerical simulations, and one with experimentally collected data. Each example demonstrates different aspects and advantages of \texttt{mpEDMD}.

\subsection{Lorenz system and convergence of spectral measures}
The Lorenz system~\cite{lorenz1963deterministic} is the following system of three coupled ordinary differential equations:
$$\setlength\abovedisplayskip{4pt}\setlength\belowdisplayskip{4pt}
\dot{X}=\sigma\left(Y-X\right),\quad\dot{Y}=X\left(\rho-Z\right)-Y,\quad \dot{Z}=XY-\beta Z.
$$
 The system describes a truncated model of Rayleigh--B\'{e}nard convection, where the parameters $\sigma$, $\rho$, and $\beta$ are proportional to the Prandtl number, Rayleigh number, and the physical proportions of the fluid, respectively. We take $\sigma=10$, $\beta=8/3$, and $\rho=28$, corresponding to the original system studied by Lorenz, and consider the dynamics of $\pmb{x}=(X,Y,Z)$ on the Lorenz attractor. The system is chaotic and strongly mixing~\cite{luzzatto2005lorenz} (and hence ergodic), so that there are no non-trivial eigenvalues of $\mathcal{K}$. We consider the corresponding discrete-time dynamical system by sampling with a time-step $\Delta_t=0.1$. We use the \texttt{ode45} command in MATLAB to collect data along a single trajectory with $M$ snapshots, from an initial point on the attractor. The quadrature rule in~\cref{sec:EDMD_recap} therefore corresponds to ergodic sampling.\footnote{Though we cannot accurately numerically integrate for long time periods since the system is chaotic, this does not effect the convergence of the quadrature rule. This effect is known as shadowing.}

\begin{figure}[!tbp]
 \centering
\begin{minipage}[b]{0.32\textwidth}
  \begin{overpic}[width=\textwidth,trim={0mm 0mm 0mm -2mm},clip]{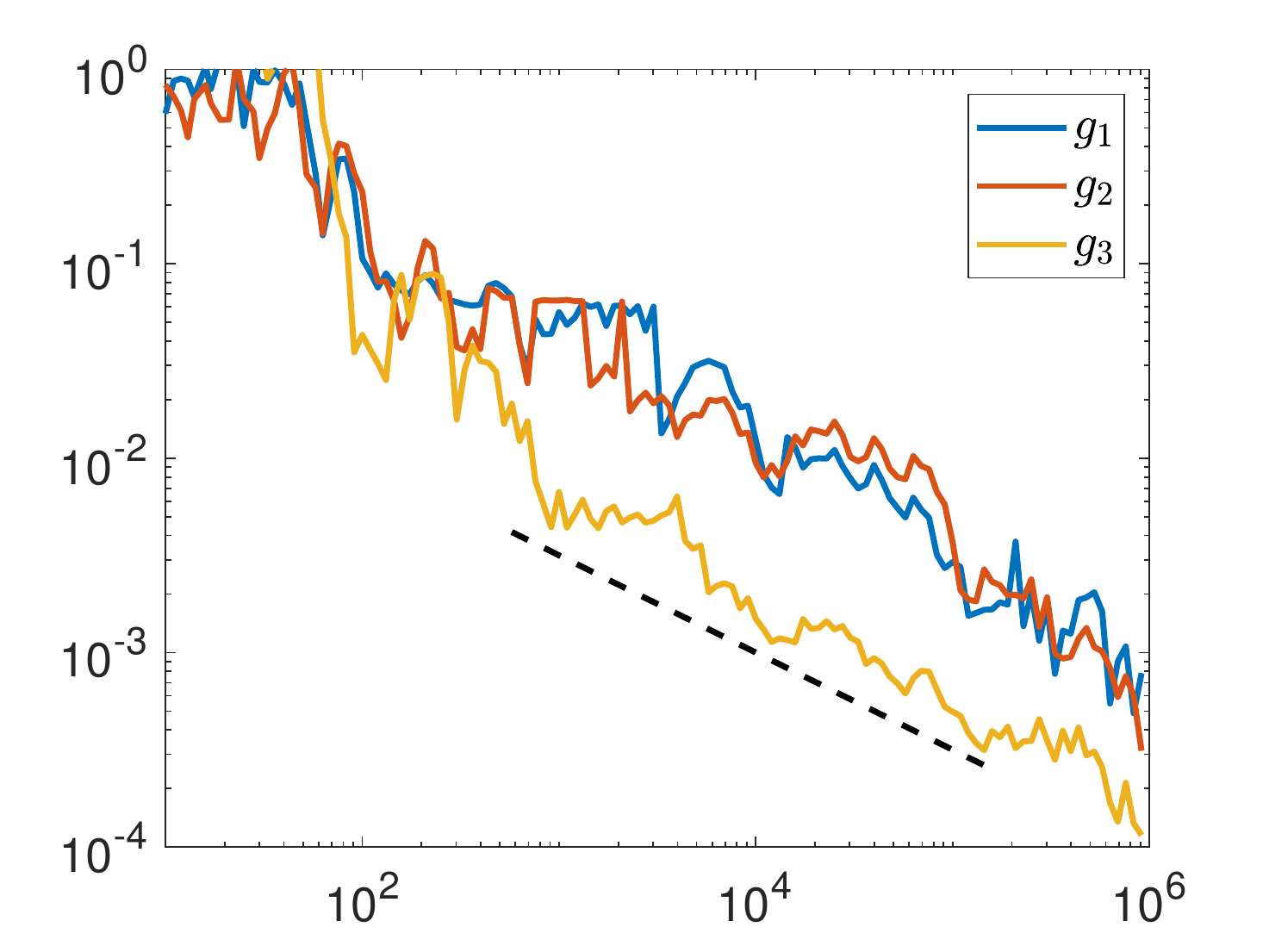}
		\put (6,74) {$W_1\left(\smash{\mu_{\pmb{g}_j}^{(50,2\times 10^6)}},\smash{\mu_{\pmb{g}_j}^{(50,M)}}\right)$}
   \put (49,-5) {$M$}
	\put (34,28) {\small\rotatebox{-27}{$\mathcal{O}(M^{-1/2})$}}
   \end{overpic}
 \end{minipage}
\begin{minipage}[b]{0.32\textwidth}
  \begin{overpic}[width=\textwidth,trim={0mm 0mm 0mm -2mm},clip]{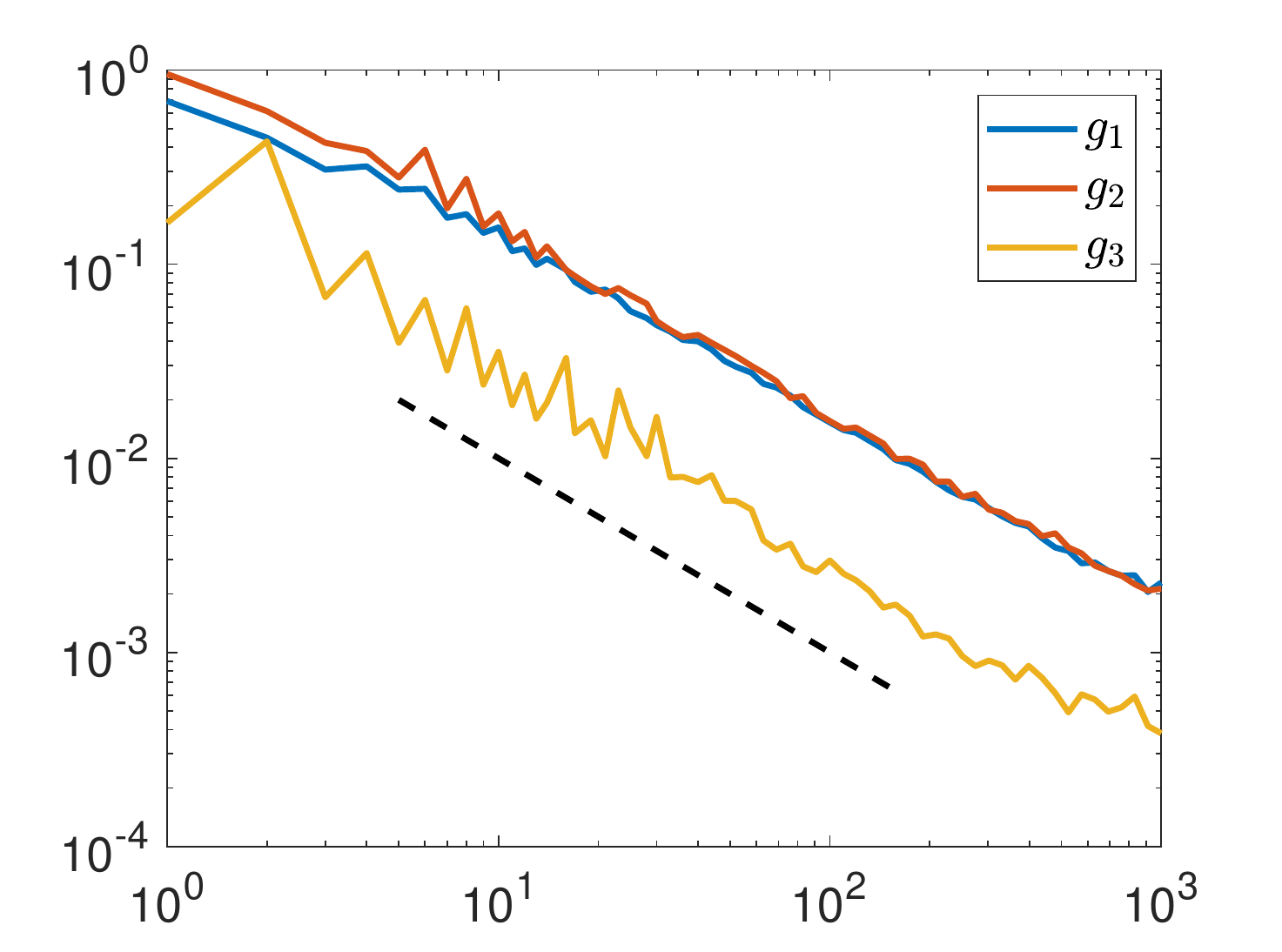}
		\put (18,74) {$W_1\left(\mu_{g_j},\smash{\mu_{\pmb{g}_j}^{(N,10^6)}}\right)$}
   \put (49,-5) {$N$}
	\put (30,34) {\small\rotatebox{-30}{$\mathcal{O}(N^{-1})$}}
   \end{overpic}
 \end{minipage}
\begin{minipage}[b]{0.32\textwidth}
  \begin{overpic}[width=\textwidth,trim={0mm 0mm 0mm -2mm},clip]{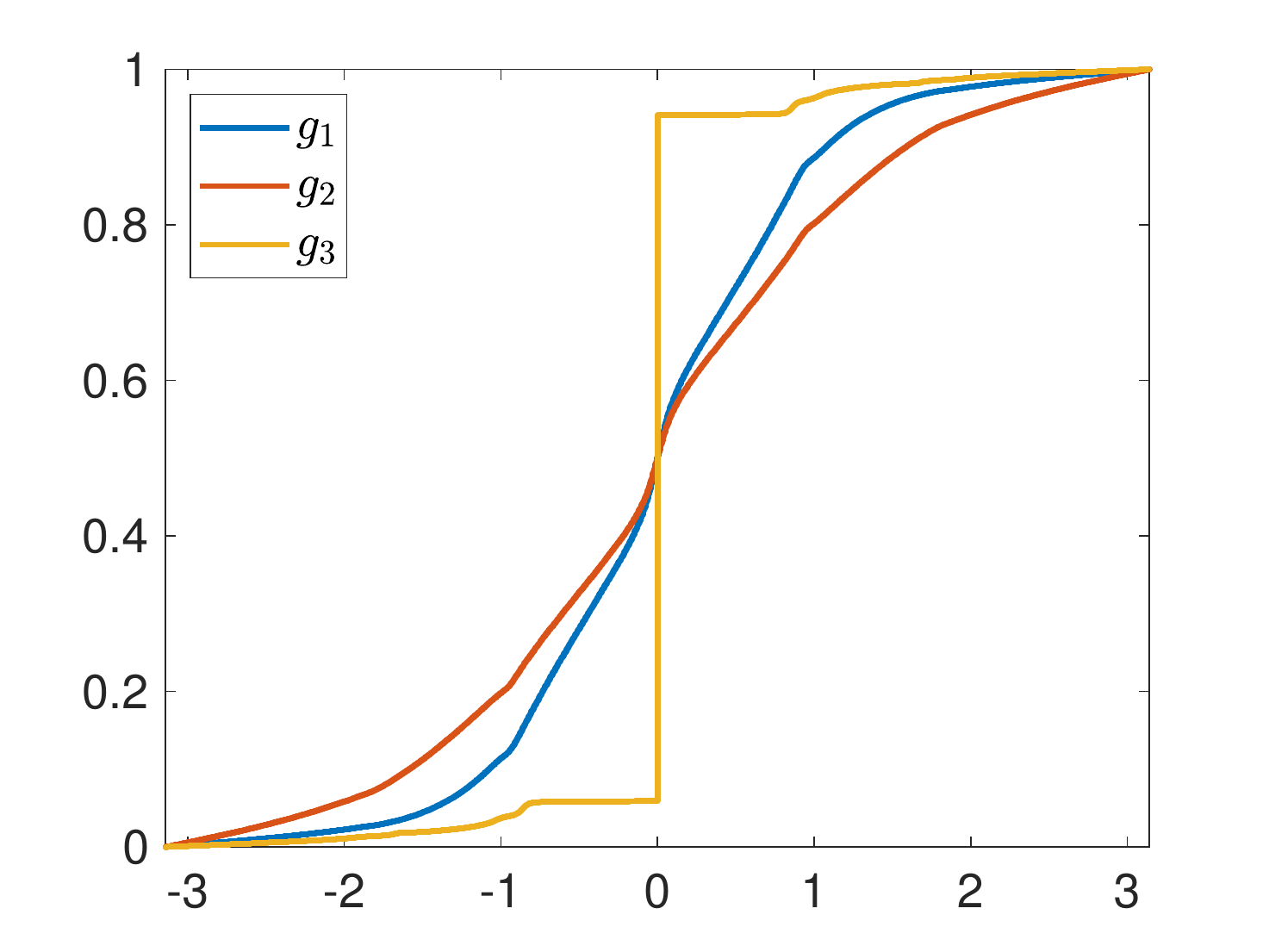}
		\put (20,74) {Cdf of $\smash{\mu_{\pmb{g}_j}^{(10^3,10^6)}}$}
   \put (49,-5) {$\theta$}
   \end{overpic}
 \end{minipage}
\vspace{-1mm}
	  \caption{Left: Convergence of $\smash{\mu_{\pmb{g}_j}^{(50,M)}}$ as $M\rightarrow\infty$. Middle: Convergence to the scalar-valued measure as $N\rightarrow\infty$. The $W_1$ distance is computed by comparing to an approximation with larger $N$. Right: Cdf of ${\mu_{\pmb{g}_j}^{(10^3,10^6)}}$ plotted against the phase, $\theta$, of the spectral parameter $\lambda=e^{i\theta}$. In all cases, the $W_1$ distance is computed using the $L^1$ distance between the cdfs.}\vspace{-3mm}
\label{fig:lorenz1}
\end{figure}

We consider the scalar-valued spectral measures $\mu_{\pmb{g}_j}^{(N,M)}$, where $g_j(\pmb{x})=c_j[\pmb{x}]_j$ is the $j$th coordinate suitably normalized to have norm $1$ with respect to the ergodic measure $\omega$. In each case, we use $\{g_j,\mathcal{K}g_j,\ldots, \mathcal{K}^{N-1}g_j\}$ as the dictionary. This choice corresponds to time-delay embedding.~\cref{fig:lorenz1} (left) shows the convergence as $M\rightarrow\infty$ (large data limit) for a fixed $N=50$. The convergence is at a Monte--Carlo rate of $\mathcal{O}(M^{-1/2})$.~\cref{fig:lorenz1} (middle) shows the convergence as $N\rightarrow\infty$, where $M=10^6$ is selected large enough to have negligible effect on the shown errors. The plot demonstrates the rate $\mathcal{O}(N^{-1})$ from~\cref{delay}.~\cref{fig:lorenz1} (right) plots the cumulative distribution functions (cdfs) of $\smash{\mu_{\pmb{g}_j}^{(N,M)}}$ for $N=10^3$ and $M=10^6$. %For this example, the cdf of $\mu_{g_j}$ is continuous away from $\lambda=1$ and hence the cdf of $\smash{\mu_{\pmb{g}_3}^{(N,M)}}$ converges pointwise on $\mathbb{T}\backslash \{1\}$.
The cdf for $\smash{\mu_{\pmb{g}_3}^{(N,M)}}$ suggests an atom at $\lambda =1$ with small absolutely continuous spectrum in the vicinity of $\lambda=1$. In contrast, $\smash{\mu_{\pmb{g}_1}^{(N,M)}}$ and $\smash{\mu_{\pmb{g}_2}^{(N,M)}}$ are more uniform.

Next, we approximate the projection-valued spectral measures and demonstrate \cref{thm_weak_conv2}. We use $\{g_1,g_2,g_3,\mathcal{K}g_1,\mathcal{K}g_2,\mathcal{K}g_3,\ldots, \mathcal{K}^{q-1}g_1,\mathcal{K}^{q-1}g_2,\mathcal{K}^{q-1}g_3\}$ as the dictionary. We take $\phi(\lambda)=\exp((\lambda-\overline{\lambda})/(2i))$  and compute $\int_{\mathbb{T}}\phi(\lambda)\, d \mathcal{E}_{N,M}(\lambda) \pmb{g}_j$. \cref{fig:lorenz2} (left) shows the convergence as $M\rightarrow\infty$ for a fixed $N=30$. Again, we see the Monte--Carlo rate of convergence $\mathcal{O}(M^{-1/2})$. \cref{fig:lorenz2} (right) shows the convergence as $N\rightarrow\infty$, where $M=10^6$ is selected large enough to have negligible effect on the shown errors.~\cref{thm_weak_conv2} does not provide a rate of convergence and~\cref{fig:lorenz1} suggests a convergence rate of approximately $\mathcal{O}(N^{-0.75})$. In general, we found the rate to be dependent on the function $g$. Finally,~\cref{fig:lorenz3} shows the outputs as functions on the Lorenz attractor using $N=3q=999$ basis functions and $M=10^6$.

%\textcolor[rgb]{1,0,0}{LOBE SWITCHING?}

\begin{figure}[!tbp]
 \centering\vspace{-1mm}
\begin{minipage}[b]{0.4\textwidth}
  \begin{overpic}[width=\textwidth,trim={0mm 0mm 0mm -7mm},clip]{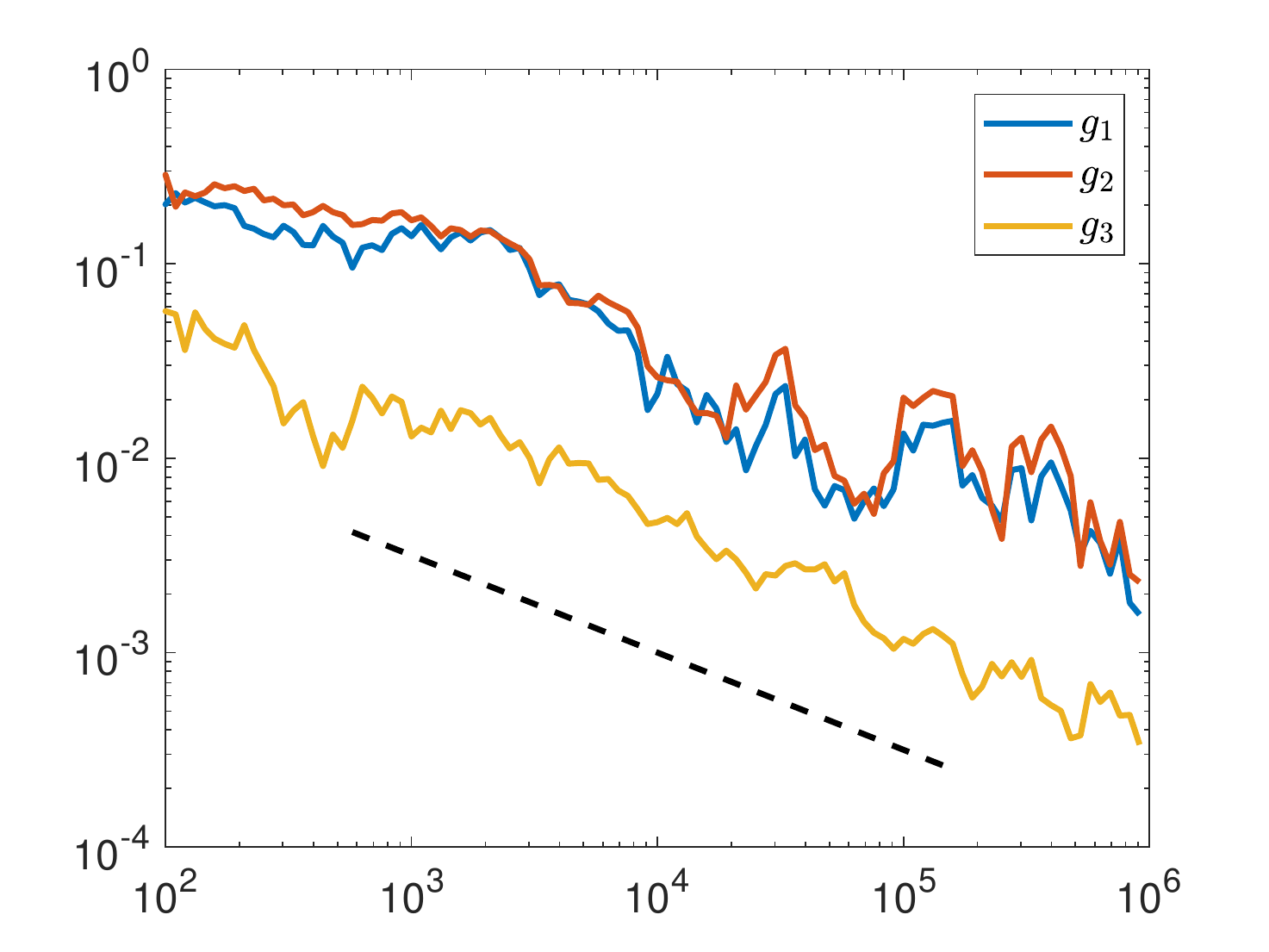}
		\put (3,74) {\tiny{\parbox{1\linewidth}{\begin{equation*}\frac{\|\Psi\int_{\mathbb{T}}\phi(\lambda) d[ \mathcal{E}_{30,2\times 10^6}(\lambda)- \mathcal{E}_{30,M}(\lambda)] \pmb{g}_j\|}{\|\Psi\int_{\mathbb{T}}\phi(\lambda) d \mathcal{E}_{30,2\times 10^6}(\lambda) \pmb{g}_j\|}\end{equation*}}}}
   \put (49,-3) {$M$}
	\put (35,23) {\rotatebox{-22}{$\mathcal{O}(M^{-1/2})$}}
   \end{overpic}
 \end{minipage}
\begin{minipage}[b]{0.4\textwidth}
  \begin{overpic}[width=\textwidth,trim={0mm 0mm 0mm -7mm},clip]{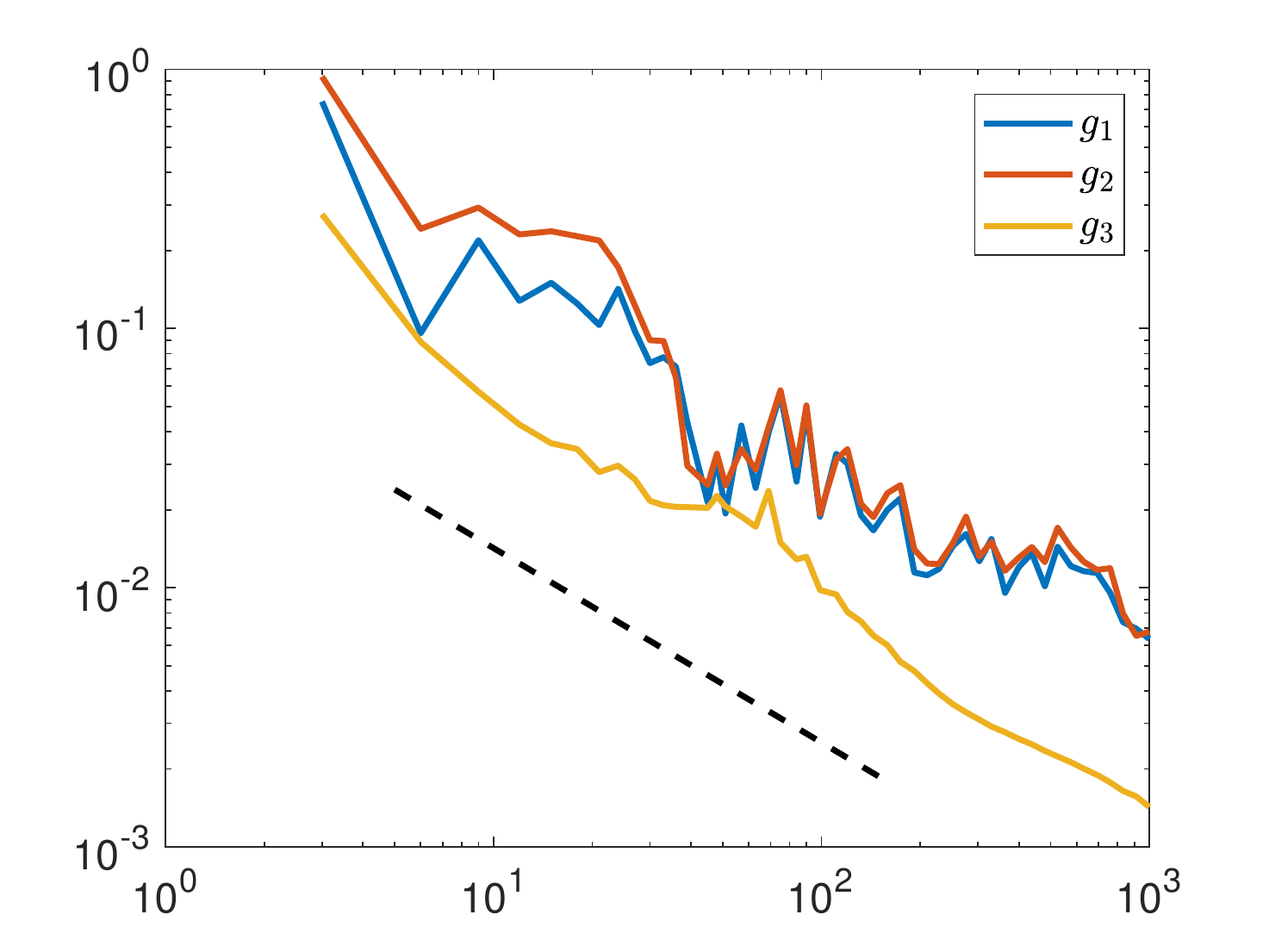}
		\put (3,74) {\tiny{\parbox{1\linewidth}{\begin{equation*}\frac{\|\int_{\mathbb{T}}\phi(\lambda) (d \mathcal{E}(\lambda) g_j-\Psi d \mathcal{E}_{999,10^6}(\lambda) \pmb{g}_j)\|}{\|\int_{\mathbb{T}}\phi(\lambda) d \mathcal{E}(\lambda) g_j\|}\end{equation*}}}}
   \put (49,-3) {$N$}
	\put (33,27) {\rotatebox{-32}{$\mathcal{O}(N^{-0.75})$}}
   \end{overpic}
 \end{minipage}\vspace{-2mm}
	  \caption{Left: Convergence of integrals as $M\rightarrow\infty$. Right: Convergence of the integrals as $N\rightarrow\infty$. The relative error is computed by comparing to an approximation with larger $N$.}\vspace{-2mm}
\label{fig:lorenz2}
\end{figure}

\begin{figure}[!tbp]
 \centering
 \begin{minipage}[b]{0.32\textwidth}
  \begin{overpic}[width=\textwidth,trim={0mm 0mm 0mm 0mm},clip]{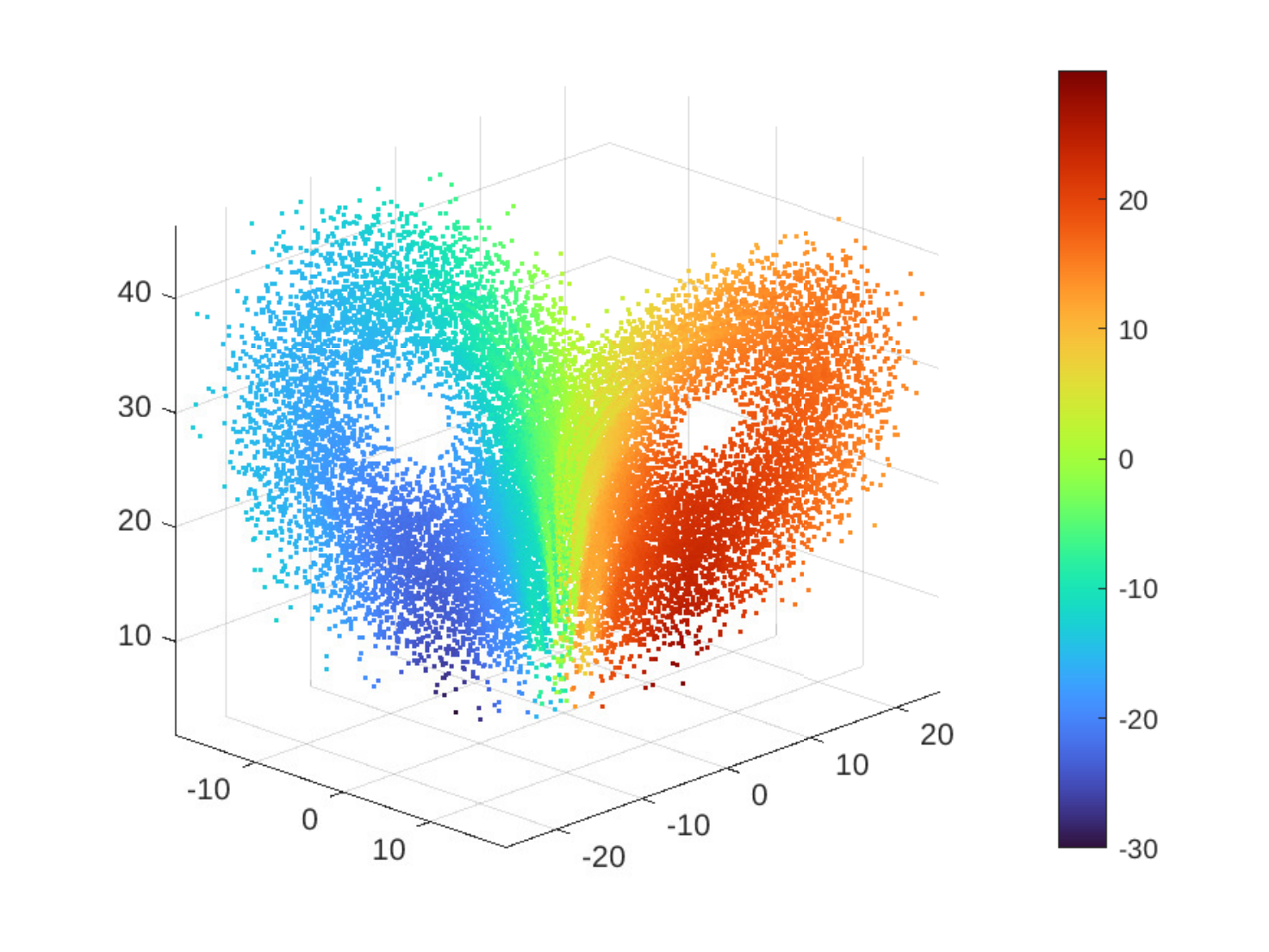}
		\put (6,68) {\small$\Psi\int_{\mathbb{T}}\phi(\lambda)\, d \mathcal{E}_{N,M}(\lambda) \pmb{g}_1$}
   \put (15,4) {$X$}
	\put (62,6) {$Y$}
		\put (2,33) {$Z$}
   \end{overpic}
 \end{minipage}
	\begin{minipage}[b]{0.32\textwidth}
  \begin{overpic}[width=\textwidth,trim={0mm 0mm 0mm 0mm},clip]{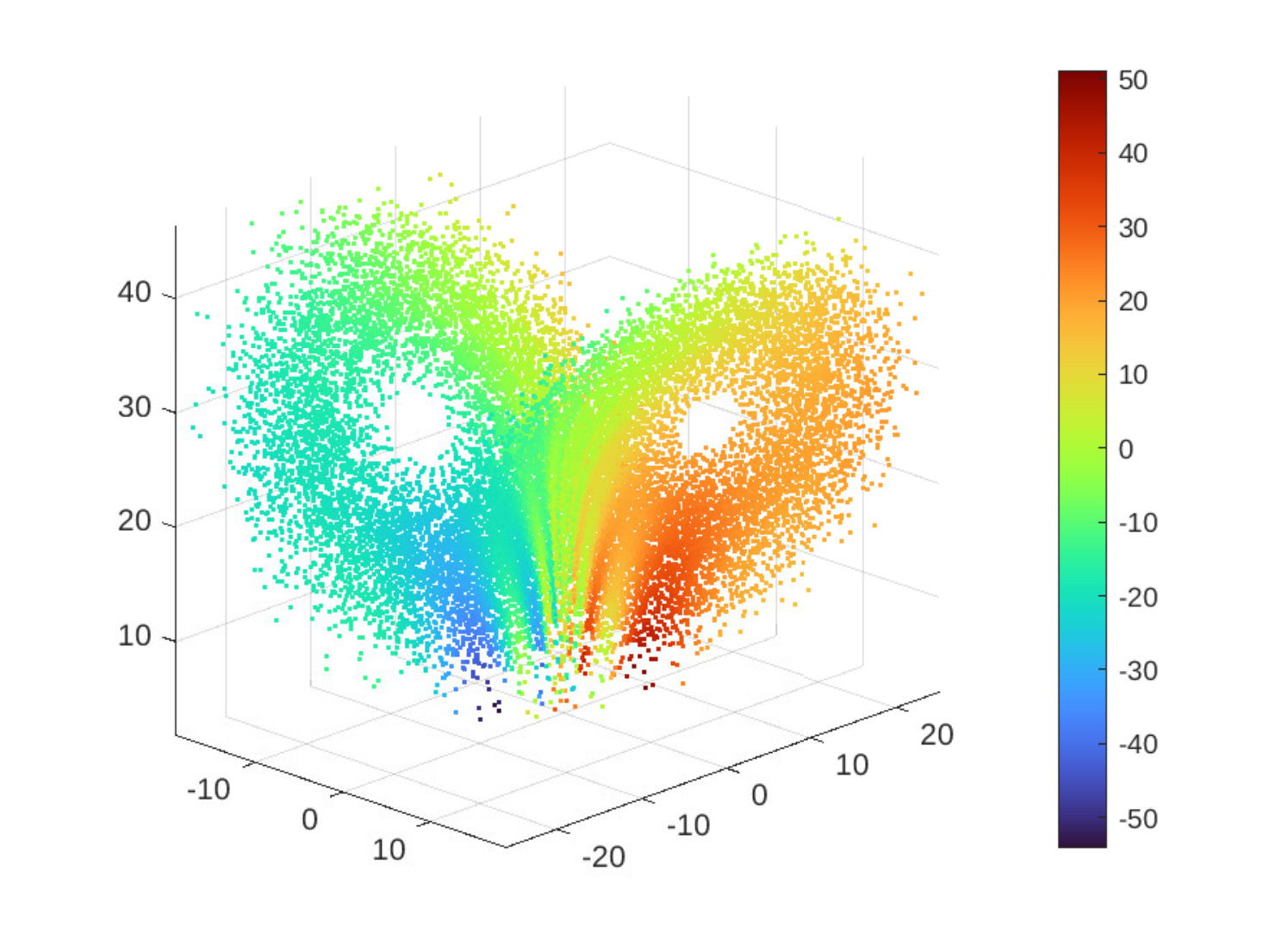}
		\put (6,68) {\small$\Psi\int_{\mathbb{T}}\phi(\lambda)\, d \mathcal{E}_{N,M}(\lambda) \pmb{g}_2$}
   \put (15,4) {$X$}
	\put (62,6) {$Y$}
		\put (2,33) {$Z$}
   \end{overpic}
 \end{minipage}
\begin{minipage}[b]{0.32\textwidth}
  \begin{overpic}[width=\textwidth,trim={0mm 0mm 0mm 0mm},clip]{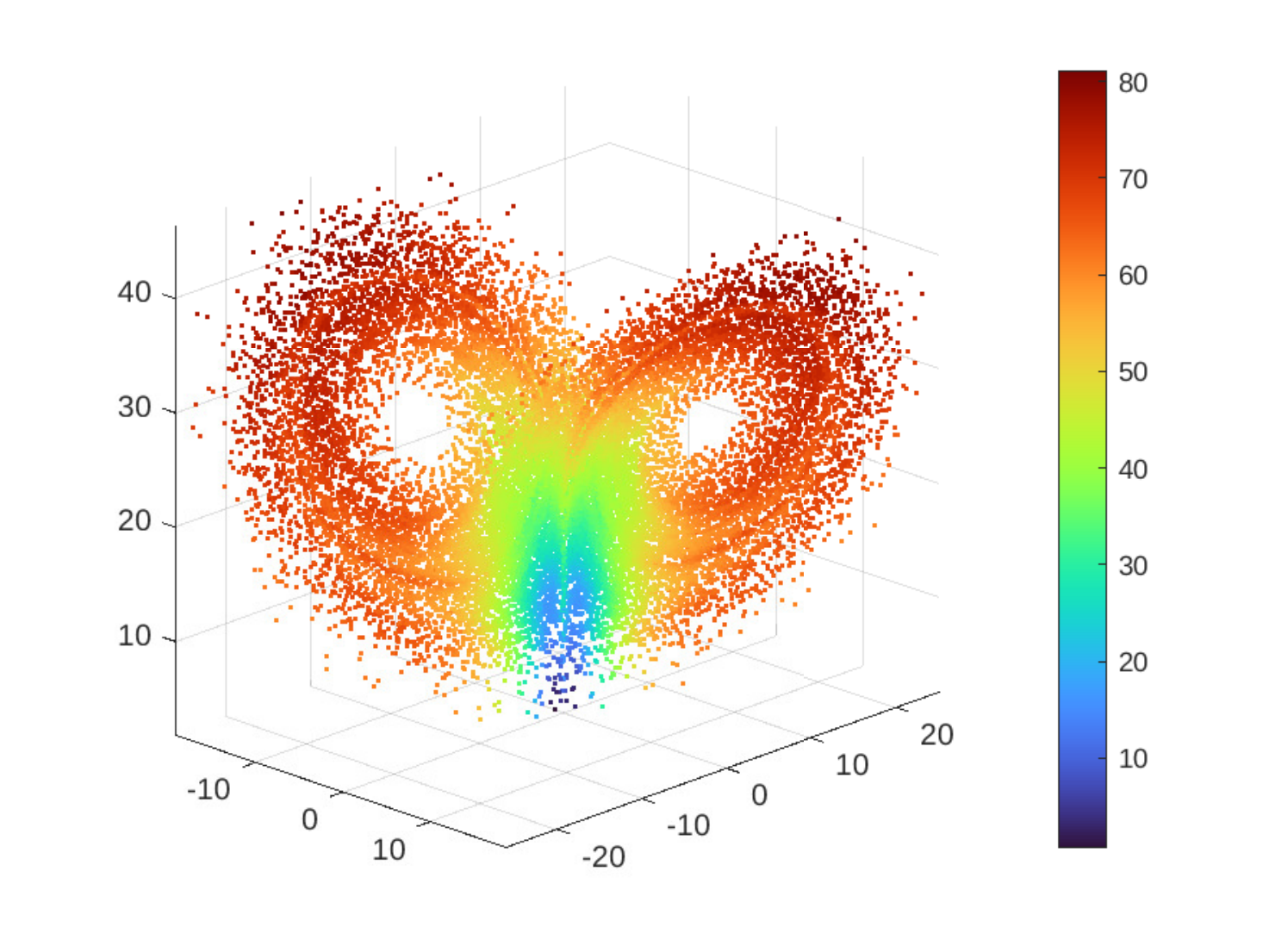}
		\put (6,68) {\small$\Psi\int_{\mathbb{T}}\phi(\lambda)\, d \mathcal{E}_{N,M}(\lambda) \pmb{g}_3$}
   \put (15,4) {$X$}
	\put (62,6) {$Y$}
		\put (2,33) {$Z$}
   \end{overpic}
 \end{minipage}\vspace{-4mm}
	  \caption{$\int_{\mathbb{T}}\phi(\lambda)\, d \mathcal{E}_{N,M}(\lambda) \pmb{g}_j$ computed using $N=3q=999$, $M=10^6$ and plotted at $2\times 10^4$ points on the attractor for visualization.}\vspace{-4mm}
\label{fig:lorenz3}
\end{figure}

\subsection{Nonlinear pendulum, approximate eigenfunctions, and robustness to noise}\label{num_example:pendulum}

We now consider the dynamical system of the nonlinear pendulum. Let $\pmb{x}=(x_1,x_2)$ be the state variables governed by the following equations of motion: 
$$\setlength\abovedisplayskip{4pt}\setlength\belowdisplayskip{4pt}
\dot{x_1}=x_2,\quad \dot{x_2}=-\sin(x_1),\quad\text{ with}\quad \Omega=[-\pi,\pi]_{\mathrm{per}}\times \mathbb{R},
$$
where $\omega$ is the standard Lebesgue measure on $\Omega$. We consider the corresponding discrete-time dynamical system by sampling with a time-step $\Delta_t=0.5$. The system is non-chaotic and Hamiltonian, with challenging Koopman operator theory~\cite{lusch2018deep}.% The Koopman operator is unitary with continuous spectrum.

We use the dictionary $\{g,\mathcal{K}g,\ldots, \mathcal{K}^{99}g\}$, with $g(x_1,x_2)=\exp(ix_1)x_2\exp(-x_2^2/2)$. We collect data points on an equispaced tensor product grid corresponding to the periodic trapezoidal quadrature rule with $M_1$ points in the $x_1$ direction and a truncated trapezoidal quadrature rule with $M_2=M_1$ points in the $x_2$ direction. For our problem, these quadrature rules have exponential~\cite{trefethen2014exponentially} and $\smash{\mathcal{O}(\exp(-CM_2^{2/3}))}$~\cite{trefethen2022exactness} convergence, respectively. To simulate the collection of trajectory data, we compute trajectories starting at each initial condition using the \texttt{ode45} command in MATLAB.

\cref{fig:pendulum1} shows approximate eigenfunctions on a log-scale, computed using $M_1=200$. The Koopman operator $\mathcal{K}$ has no normalizable eigenfunctions, but has generalized eigenfunctions supported along unions of contour lines of the action variable \cite{mezic2020spectrum}. The eigenfunctions produced by \texttt{mpEDMD} are much more localized along these contour lines and better approximate the generalized eigenfunctions than EDMD, whose approximate eigenfunctions are blurred. \cref{fig:pendulum2} (left) shows the eigenvalues of $\mathbb{K}$ and $\mathbb{K}_{\mathrm{EDMD}}$. The eigenvalues of $\mathbb{K}_{\mathrm{EDMD}}$ lie strictly inside the unit disc, corresponding to spectral pollution. Note that this spectral pollution has nothing to do with any stability issues, but instead is due to the discretization of the infinite-dimensional operator $\mathcal{K}$ by a finite matrix. In contrast, \texttt{mpEDMD} does not suffer from spectral pollution.

\begin{figure}[!tbp]
 \centering\vspace{-2mm}
\begin{minipage}[b]{0.24\textwidth}
  \begin{overpic}[width=\textwidth,trim={28mm 0mm 25mm 0mm},clip]{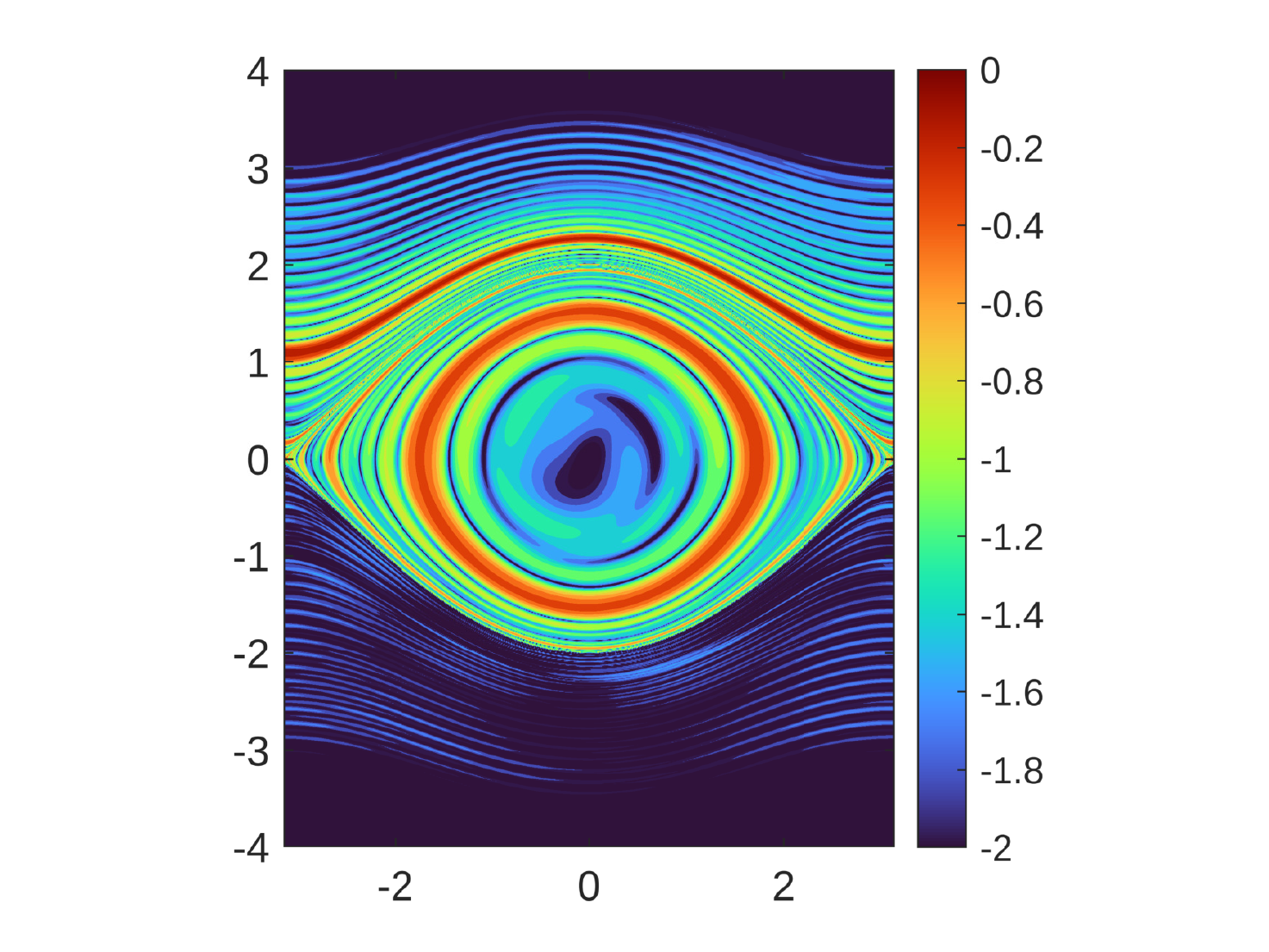}
	\put (5,96) {\texttt{mpEDMD}, $\lambda\approx e^{i\pi/4}$}
		\put (34,-1) {$x_1$}
		\put (-10,50) {$x_2$}
   \end{overpic}
 \end{minipage}
\begin{minipage}[b]{0.24\textwidth}
  \begin{overpic}[width=\textwidth,trim={28mm 0mm 25mm 0mm},clip]{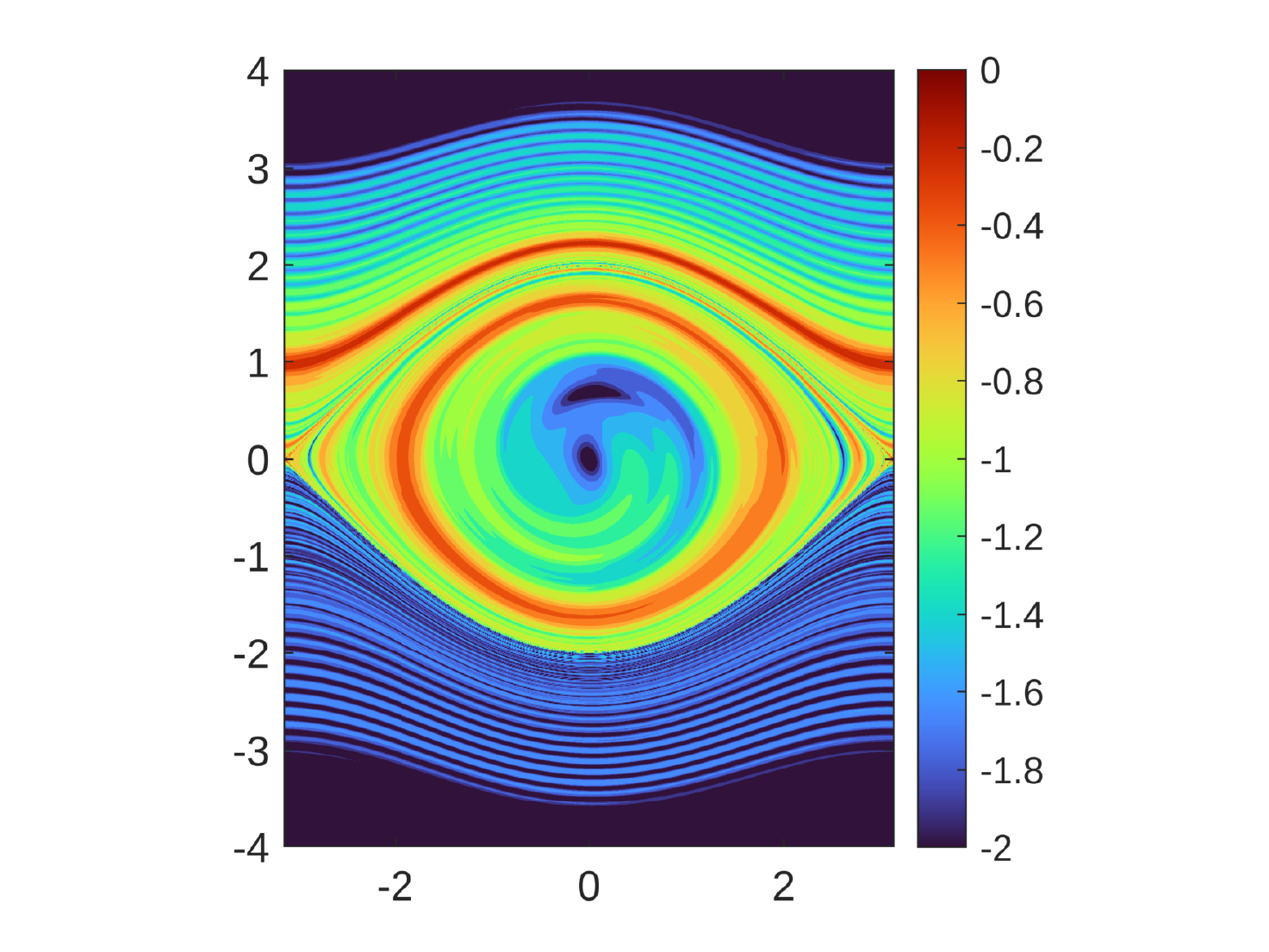}
	\put (5,96) {EDMD, $\lambda\approx e^{i\pi/4}$}
		\put (34,-1) {$x_1$}
   \end{overpic}
 \end{minipage}
\begin{minipage}[b]{0.24\textwidth}
  \begin{overpic}[width=\textwidth,trim={28mm 0mm 25mm 0mm},clip]{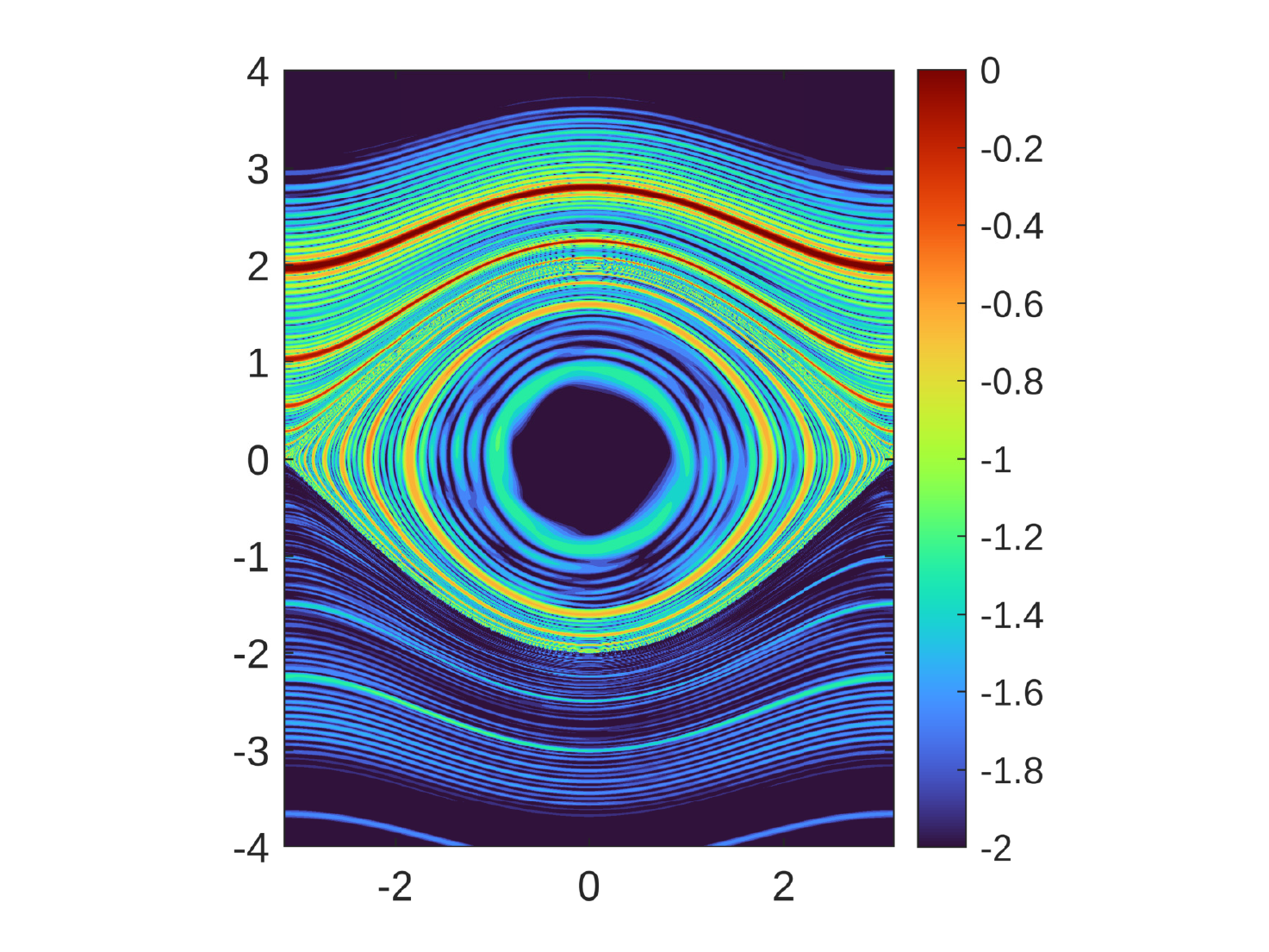}
	\put (5,96) {\texttt{mpEDMD}, $\lambda\approx e^{i3\pi/4}$}
		\put (34,-1) {$x_1$}
   \end{overpic}
 \end{minipage}
\begin{minipage}[b]{0.24\textwidth}
  \begin{overpic}[width=\textwidth,trim={28mm 0mm 25mm 0mm},clip]{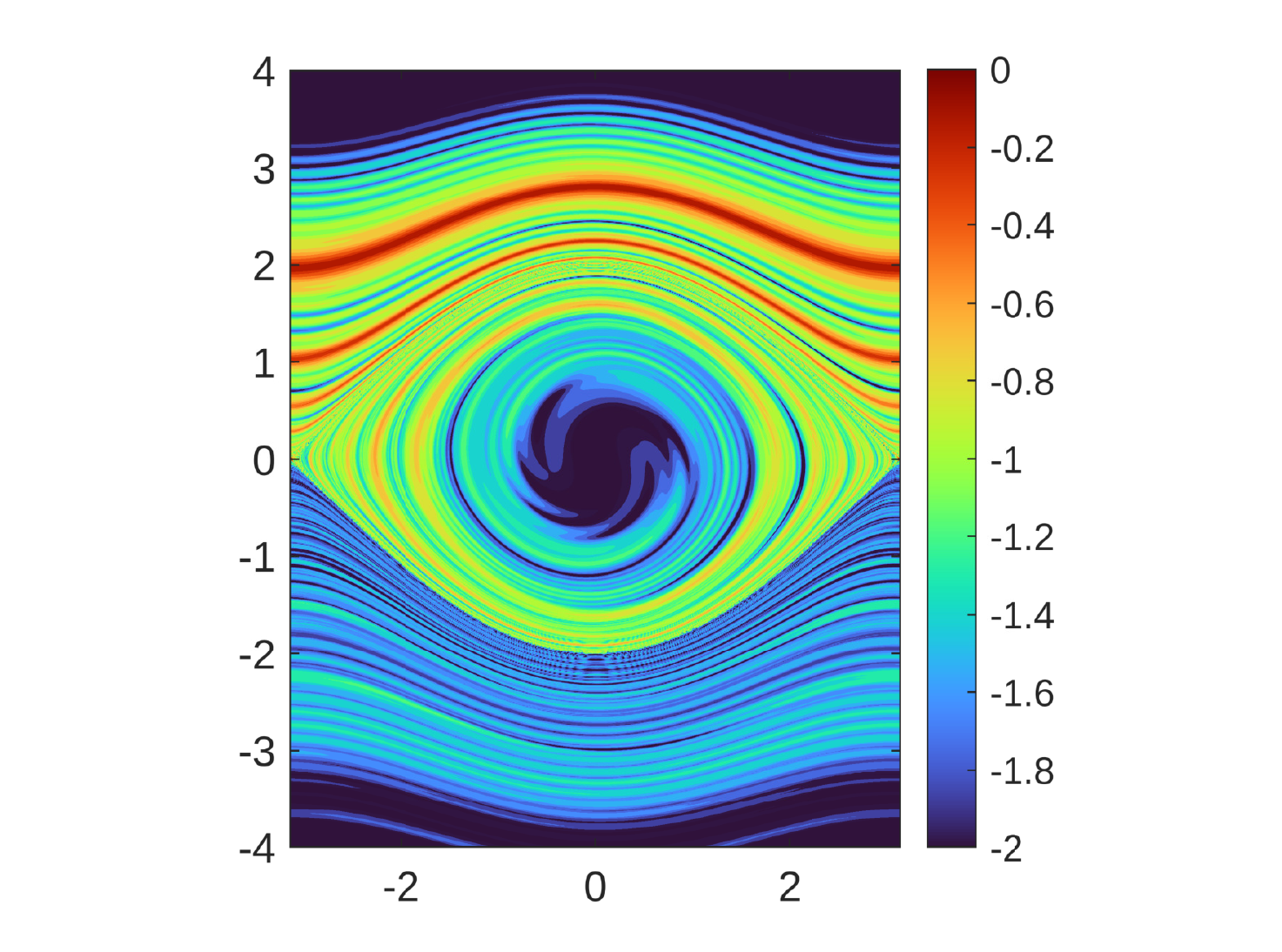}
	\put (5,96) {EDMD, $\lambda\approx e^{i3\pi/4}$}
		\put (34,-1) {$x_1$}
   \end{overpic}
 \end{minipage}
\vspace{-2mm}
	  \caption{Eigenfunctions $\log_{10}(|v|)$, where $v=\Psi \pmb{v}$ is normalized and $\pmb{v}$ is the eigenvector of $\mathbb{K}$ (\texttt{mpEDMD}) or $\mathbb{K}_{\mathrm{EDMD}}$ (EDMD). In each case we plot the eigenfunction with eigenvalue nearest to the shown value of $\lambda$. Taking $\lambda\rightarrow\overline{\lambda}$ yields the corresponding eigenfunctions reflected in $x_2=0$.}\vspace{-3mm}
\label{fig:pendulum1}
\end{figure}

\begin{figure}[!tbp]
 \centering\vspace{-1mm}
\begin{minipage}[b]{0.4\textwidth}
  \begin{overpic}[width=\textwidth,trim={0mm -3mm 0mm 2mm},clip]{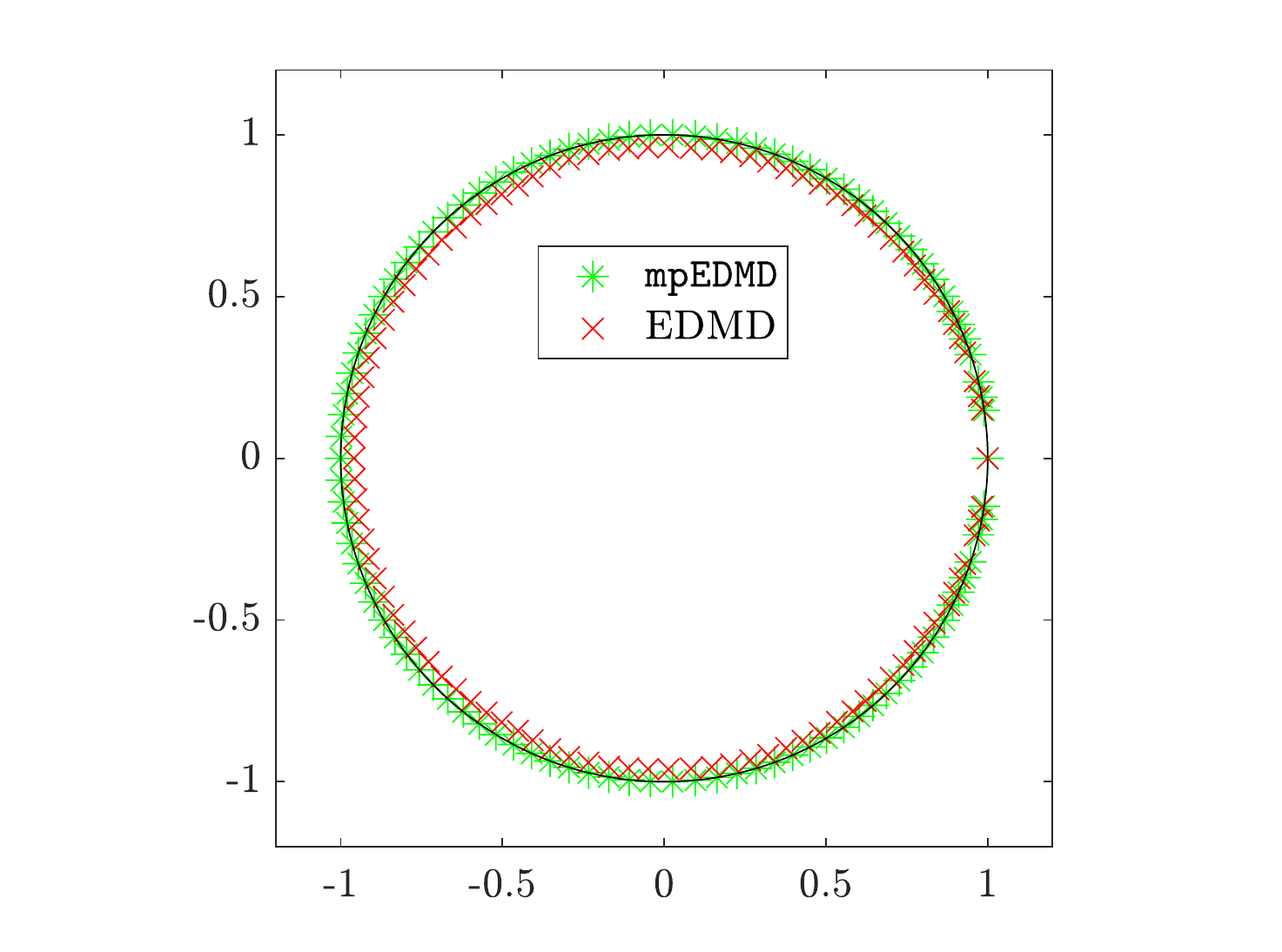}
		\put (45,0) {\small$\mathrm{Re}(\lambda)$}
		\put (10,35) {\small\rotatebox{90}{$\mathrm{Im}(\lambda)$}}
		\put(39,37){\vector(-3,-1){10}}
		\put(41,37){\small{}spectral}
		\put(41,31){\small{}pollution}
   \end{overpic}
 \end{minipage}
 \begin{minipage}[b]{0.4\textwidth}
  \begin{overpic}[width=\textwidth,trim={0mm -3mm 0mm 2mm},clip]{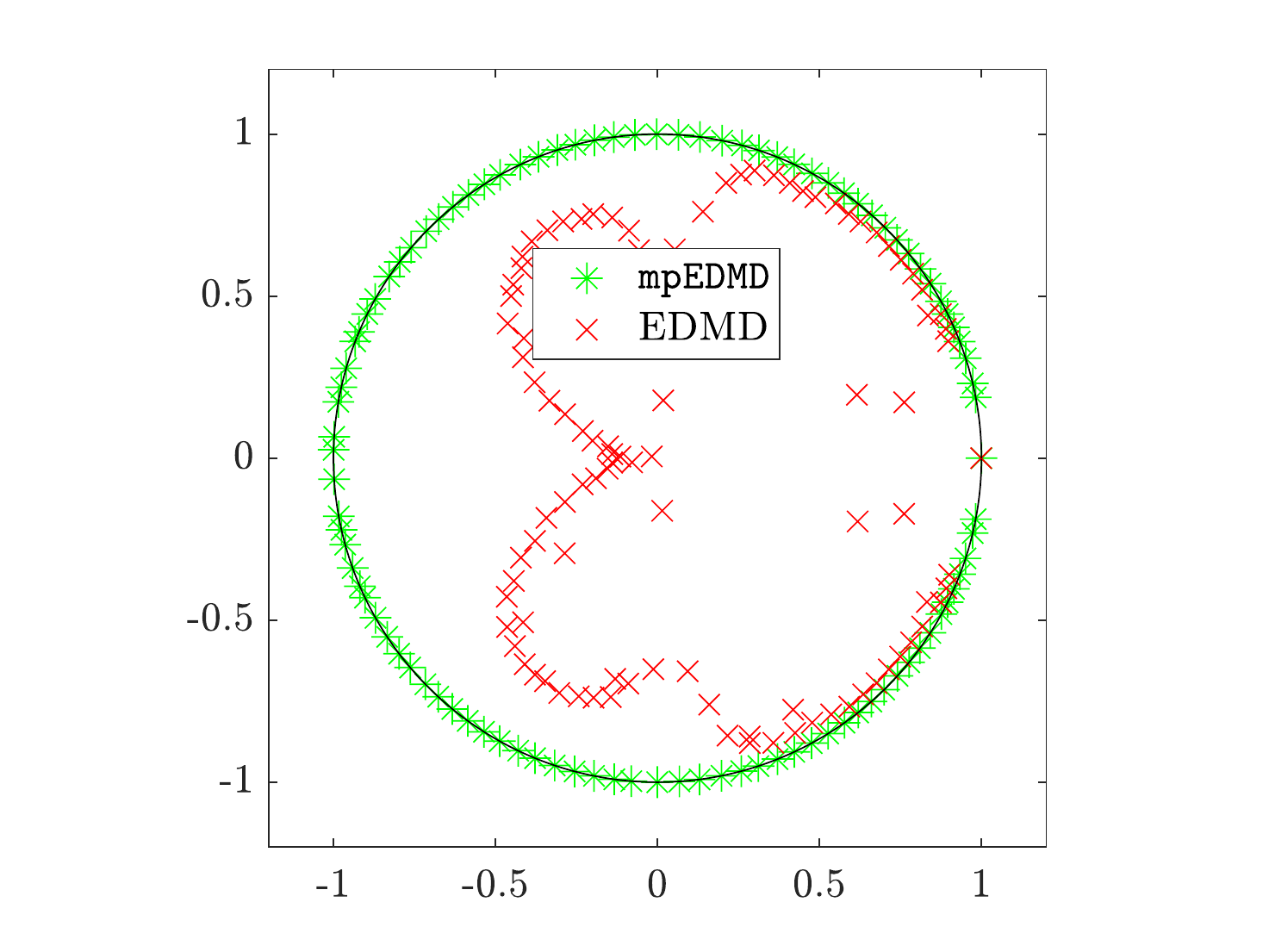}
		\put (45,0) {\small$\mathrm{Re}(\lambda)$}
		\put (10,35) {\small\rotatebox{90}{$\mathrm{Im}(\lambda)$}}
   \end{overpic}
 \end{minipage}
\vspace{-3mm}
	  \caption{Eigenvalues of $\mathbb{K}_{\mathrm{EDMD}}$ (EDMD) and $\mathbb{K}$ (\texttt{mpEDMD}). Left: Noise-free case. Right: $10\%$ Gaussian random noise added to $\Psi_X$ and $\Psi_Y$.}\vspace{-1mm}
\label{fig:pendulum2}
\end{figure}

\begin{figure}[!tbp]
 \centering%\vspace{-2mm}
\begin{minipage}[b]{0.4\textwidth}
  \begin{overpic}[width=\textwidth,trim={0mm 0mm 0mm 0mm},clip]{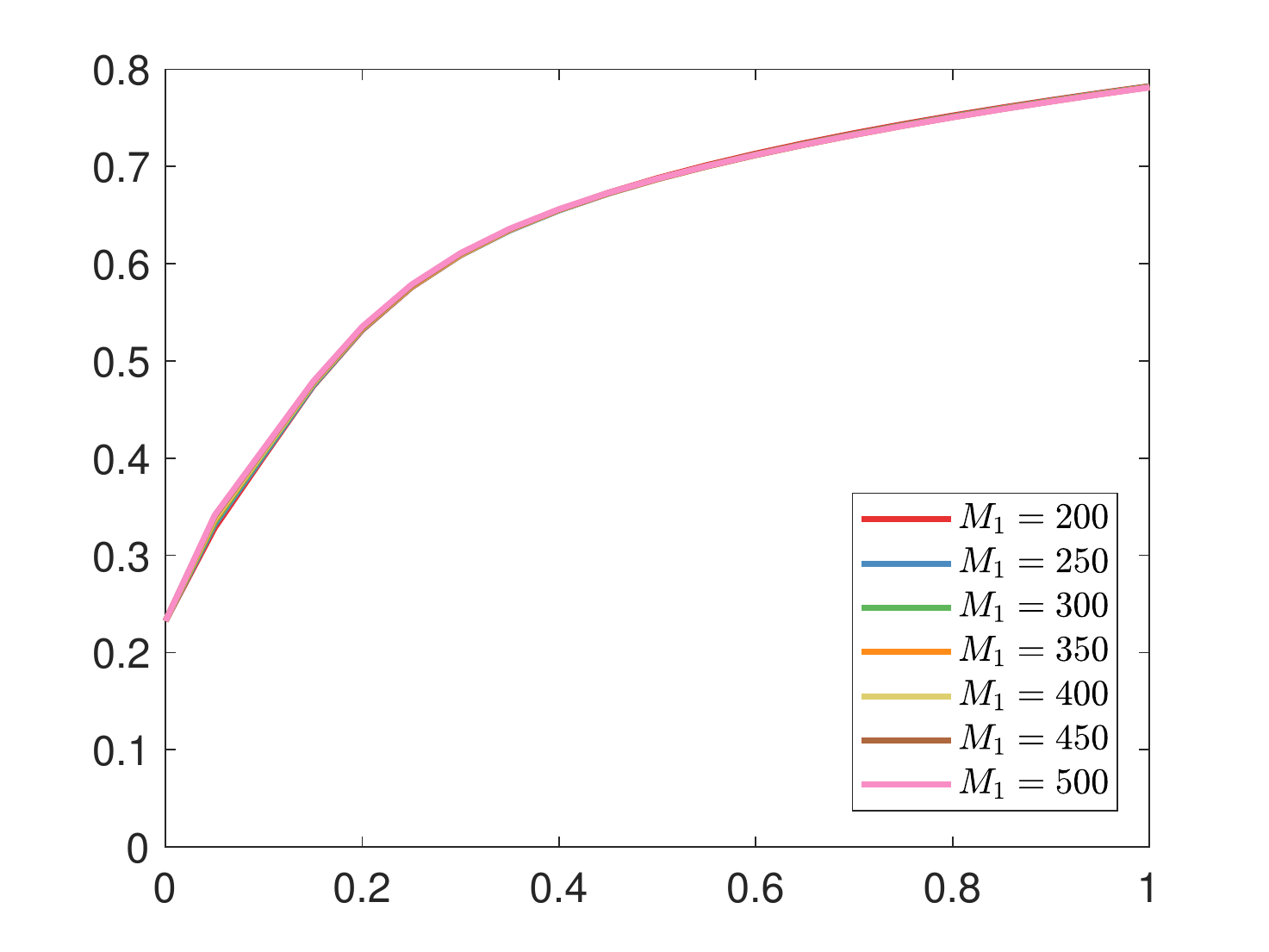}
	\put (18,72) {Mean residual (EDMD)}
   \put (30,-2) {$\tau$ (noise level)}
   \end{overpic}
 \end{minipage}
\begin{minipage}[b]{0.4\textwidth}
  \begin{overpic}[width=\textwidth,trim={0mm 0mm 0mm 0mm},clip]{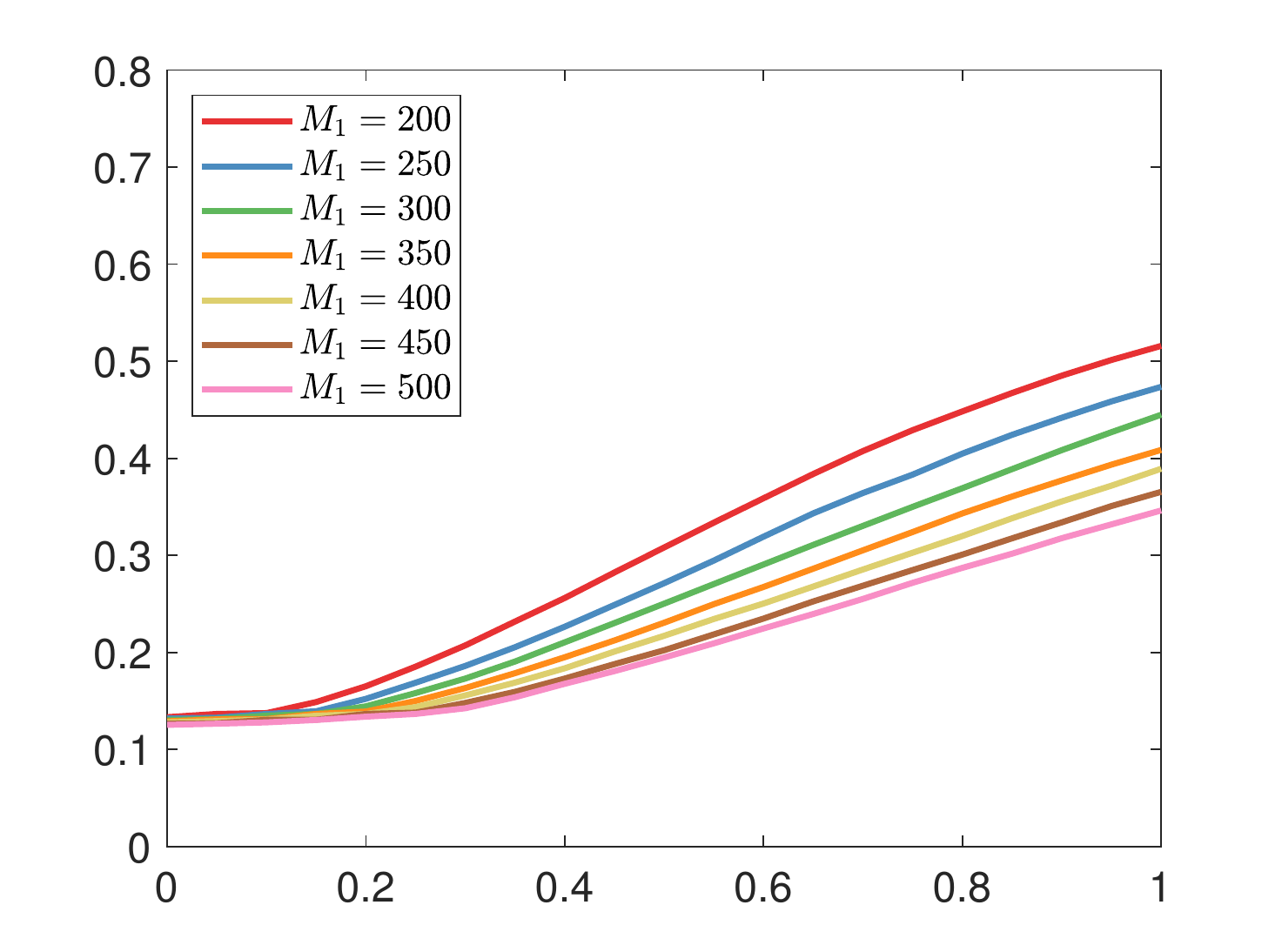}
		\put (18,72) {Mean residual (\texttt{mpEDMD})}
   \put (30,-2) {$\tau$ (noise level)}
   \end{overpic}
 \end{minipage}\vspace{-2mm}
	  \caption{Mean residual over all $N$ eigenpairs and $10$ independent realizations per noise level $\tau$. Residuals are computed using~\eqref{residual_bound} with matrices $G$ and $A$ computed using a larger $M_1$ and $\tau=0$.}\vspace{-2mm}
\label{fig:pendulum3}
\end{figure}

Noise is a substantial problem for most DMD methods, and a common remedy is to consider a total least squares (TLS) problem~\cite{dawson2016characterizing}. The solution to the orthogonal Procrustes problem~\eqref{EDMD_opt_prob4} is also the solution to the corresponding constrained TLS problem~\cite{arun1992unitarily}. Hence, \texttt{mpEDMD} is optimally robust when noise is present in both data matrices in~\eqref{EDMD_opt_prob4}~\cite{van1991total}. We test the robustness to noise by adding $\tau$ Gaussian random noise to the measurement matrices $\Psi_X$ and $\Psi_Y$ in~\eqref{psidef}. \cref{fig:pendulum2} (right) shows the effect of noise on the eigenvalues of $\mathbb{K}$ and $\mathbb{K}_{\mathrm{EDMD}}$ for $\tau=0.1$ ($10\%$ noise). The deterioration of the spectrum of $\mathbb{K}_{\mathrm{EDMD}}$ is clear. To further investigate robustness, we compute the (relative) residual of eigenpairs using~\eqref{residual_bound} with noise-free matrices $G,A$ computed using large $M_1$.~\cref{fig:pendulum3} plots the mean residual over all $N=200$ eigenpairs and $10$ independent noise realizations against the noise level $\tau$. We see that \texttt{mpEDMD} is much more robust to noise than EDMD. Moreover, for a given noise level $\tau$, the accuracy of \texttt{mpEDMD} increases as $M_1$ increases. A full statistical analysis of this phenomenon is beyond the scope of this paper, but we note that this type of behavior, known as strongly consistent estimation, is typical of TLS~\cite[Chapter 8]{van1991total}. This phenomenon does not happen with EDMD in~\cref{fig:pendulum3}.

\subsection{Conservation of energy and statistics for turbulent boundary layer flow}
\label{sec:turbulent_flow}
As our final example, we consider the boundary layer generated by a thin jet of height $12.7$mm injecting air onto a smooth flat wall. Experiments are performed at the Wall Jet Wind Tunnel of Virginia Tech~\cite{szoke2021flow}. A two-component time-resolved particle image velocimetry system is used to capture $1000$ snapshots of the two-dimensional velocity field of the wall-jet flow over a spatial grid and a time period of 1s. The streamwise origin of the field-of-view is 1282.7mm downstream of the wall-jet nozzle. We use a jet velocity of 50m/s, corresponding to a jet Reynolds number of $6.4 \times 10^4$. The length and height of the field-of-view is approximately 75mm $\times$ 40mm, and the spatial resolution of the measurements is $\approx$0.24mm. This corresponds to dimension $d=102300$ in \eqref{eq:DynamicalSystem}. We use a full SVD of the data matrix to form a dictionary, as outlined in~\cref{sec:EDMD_recap}. The flow consists of two main regions. Within the region bounded by the wall and the peak in the velocity profile, the flow exhibits the properties of a zero pressure gradient turbulent boundary layer. Above this fluid portion, the flow is dominated by a two-dimensional shear layer consisting of large, energetic flow structures. This example is a considerable challenge for regular DMD approaches due to multiple turbulent scales expected within the boundary layer.

We investigate the conservation of energy and statistics of the flow when using the KMD in~\cref{KMD_rem}. We consider the velocity profiles predicted by \texttt{mpEDMD}, EDMD, and piDMD over a time period of 5s (five times the window of observations), and averaged over $100$ random initializations $\pmb{x}_0$. \cref{fig:turbulence1} (left, middle) shows the turbulent kinetic energy (TKE) of the predictions, averaged in the (homogenous) horizontal direction, at vertical heights in the boundary layer (left panel) and in the shear layer (middle panel). The instability of the KMD for EDMD is clear. Whilst piDMD is stable and approximately conservative, it does not preserve the correct values of TKE. In contrast, \texttt{mpEDMD} conserves the correct TKE. \cref{fig:turbulence1} (right) highlights this by showing the time averaged TKE prediction of \texttt{mpEDMD} and piDMD as a function of the vertical height. The relative error of \texttt{mpEDMD} is bounded by $0.001$. These results underline the importance, even in the case of linear dictionary functions, of the non-trivial matrix $G$ in~\cref{alg:mp_EDMD}.

\begin{figure}[!tbp]
 \centering\vspace{-2mm}
 \begin{minipage}[b]{0.32\textwidth}
  \begin{overpic}[width=\textwidth,trim={0mm 0mm 0mm 0mm},clip]{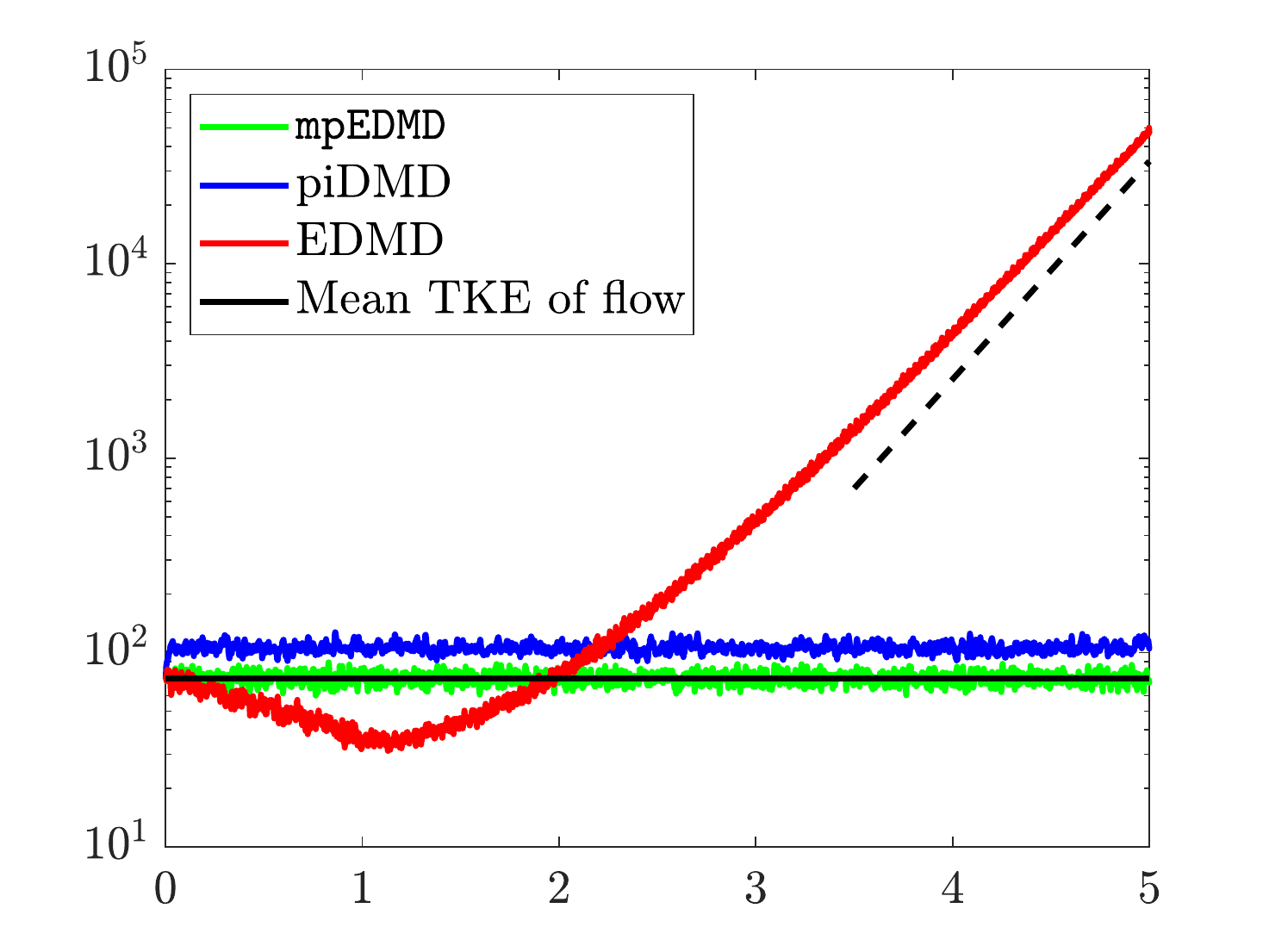}
	\put (26,72) {\small{TKE $y\approx 5$mm}}
   \put (37,-3) {\small{Time (s)}}
   \end{overpic}
 \end{minipage}
	\begin{minipage}[b]{0.32\textwidth}
  \begin{overpic}[width=\textwidth,trim={0mm 0mm 0mm 0mm},clip]{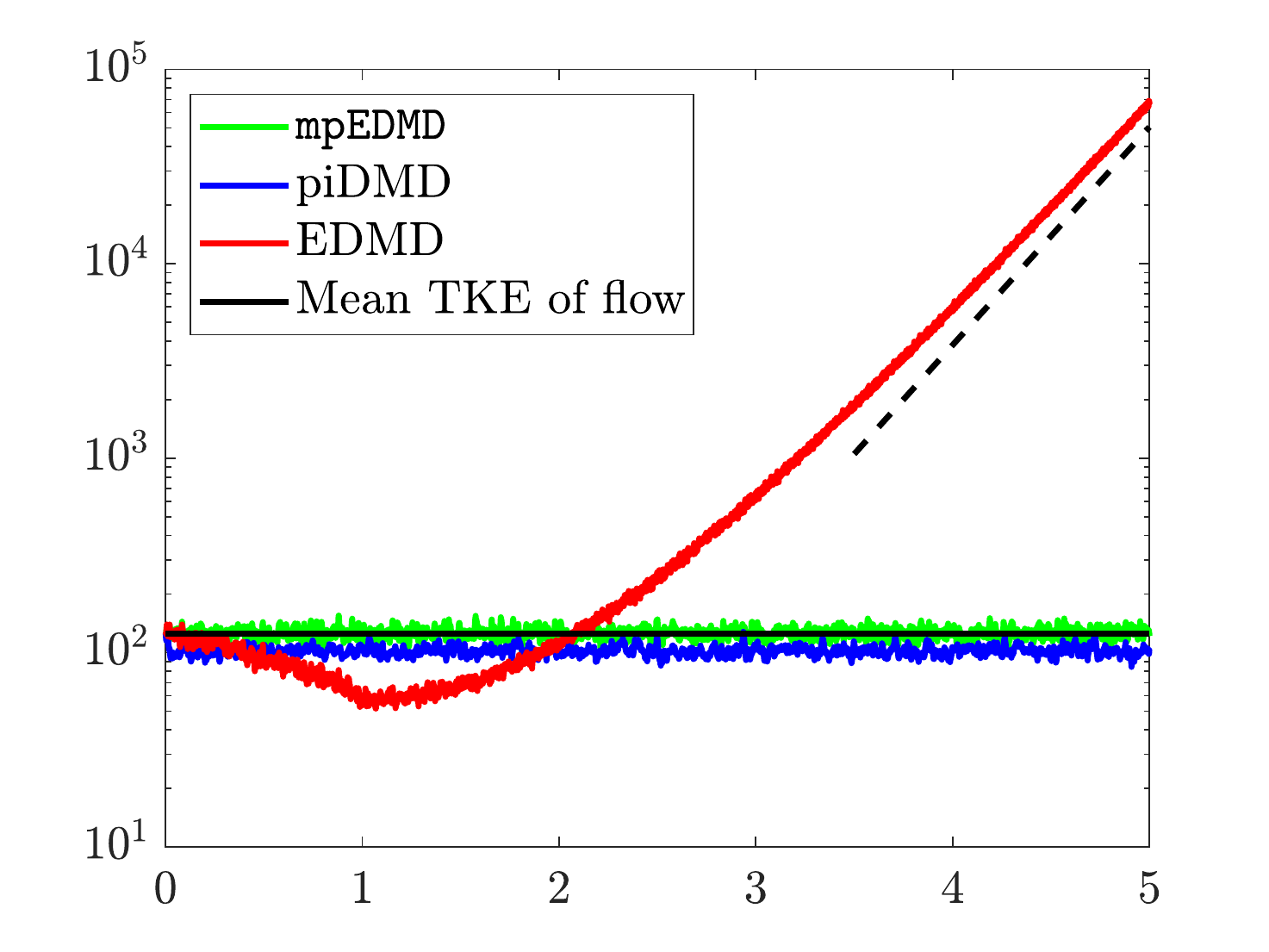}
\put (23,72) {\small{TKE, $y\approx 35$mm}}
   \put (37,-3) {\small{Time (s)}}
   \end{overpic}
 \end{minipage}
\begin{minipage}[b]{0.32\textwidth}
  \begin{overpic}[width=\textwidth,trim={0mm 0mm 0mm 0mm},clip]{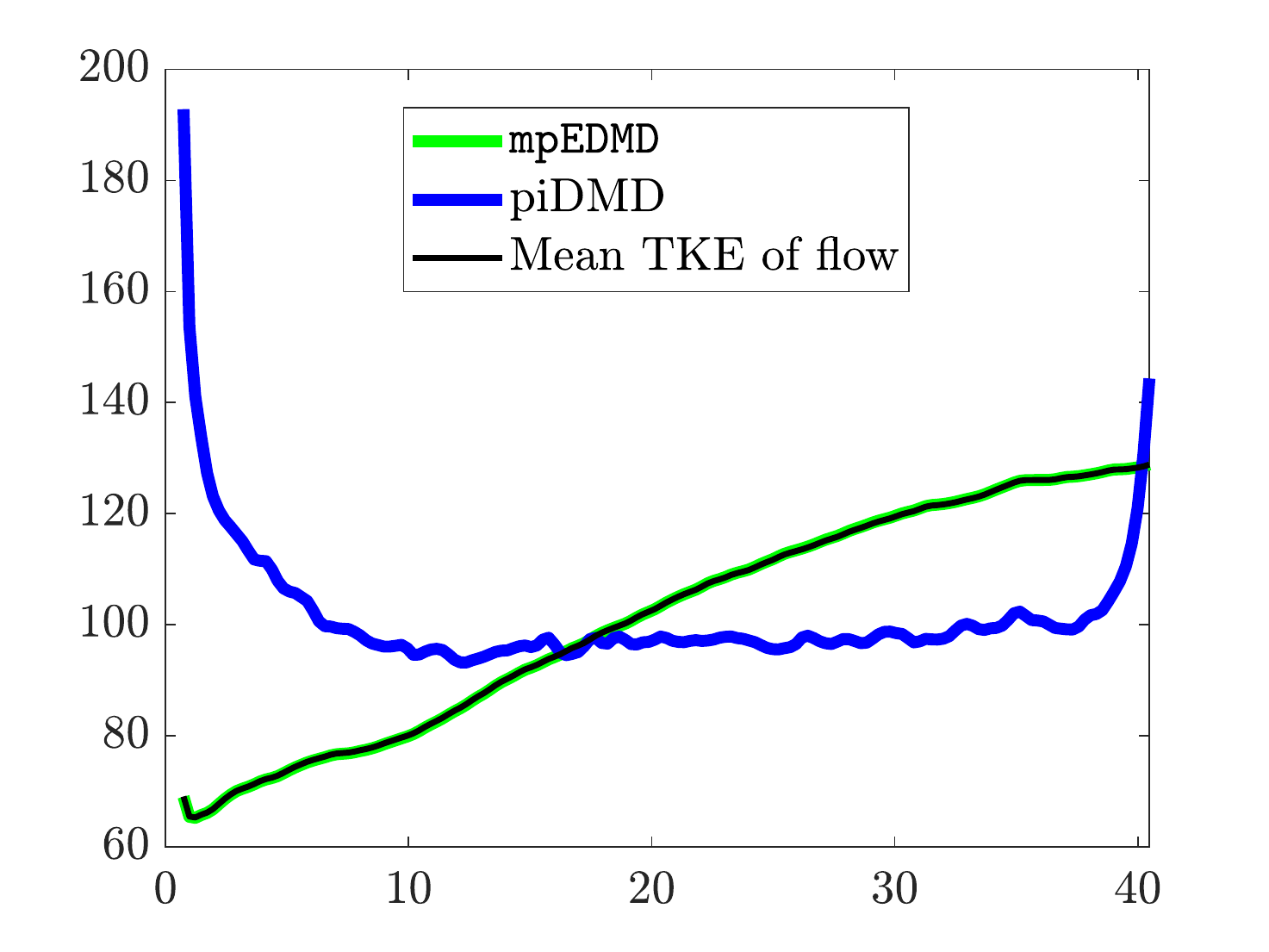}
\put (15,72) {\small{Time-averaged TKE}}
   \put (27,-3) {\small{Height $y$ (mm)}}
   \end{overpic}
 \end{minipage}\vspace{-2mm}
	  \caption{Left and middle: TKE as a function of time, averaged over (homogeneous) horizontal direction. The dashed line shows the expected growth rate of EDMD from the eigenvalues of $\mathbb{K}_{\mathrm{EDMD}}$. Right: TKE as a function of vertical height, averaged over time and horizontal direction. %The relative error of \texttt{mpEDMD} is bounded by $0.001$.
}\vspace{-4mm}
\label{fig:turbulence1}
\end{figure}

\cref{fig:turbulence1} (top row) shows characteristic predictions of the horizontal component of the velocity field at prediction time 4s. Qualitatively, \texttt{mpEDMD} captures the larger-scale structures above the boundary layer, whereas piDMD does not, and EDMD overpredicts the velocity magnitude. To investigate the statistics of the predictions, \cref{fig:turbulence1} (bottom row) shows the wavenumber spectrum, computed by applying the Fourier transform to spatial autocorrelations of the predictions in the horizontal direction \cite[Chapter 8]{glegg2017aeroacoustics}. The wavenumber spectrum of \texttt{mpEDMD} shows excellent agreement with the flow. In contrast, EDMD and piDMD do not capture the correct turbulent statistics. Whilst we can only ever capture the statistics to the resolution of the collected data, this example provides very promising results for the use of \texttt{mpEDMD} in real-world applications.%, and \texttt{mpEDMD} represents a considerable compression of the complex information in the flow field.

\begin{figure}[!tbp]
 \centering\vspace{-2mm}
\begin{minipage}[b]{0.24\textwidth}
  \begin{overpic}[width=\textwidth,trim={0mm 0mm 0mm 0mm},clip]{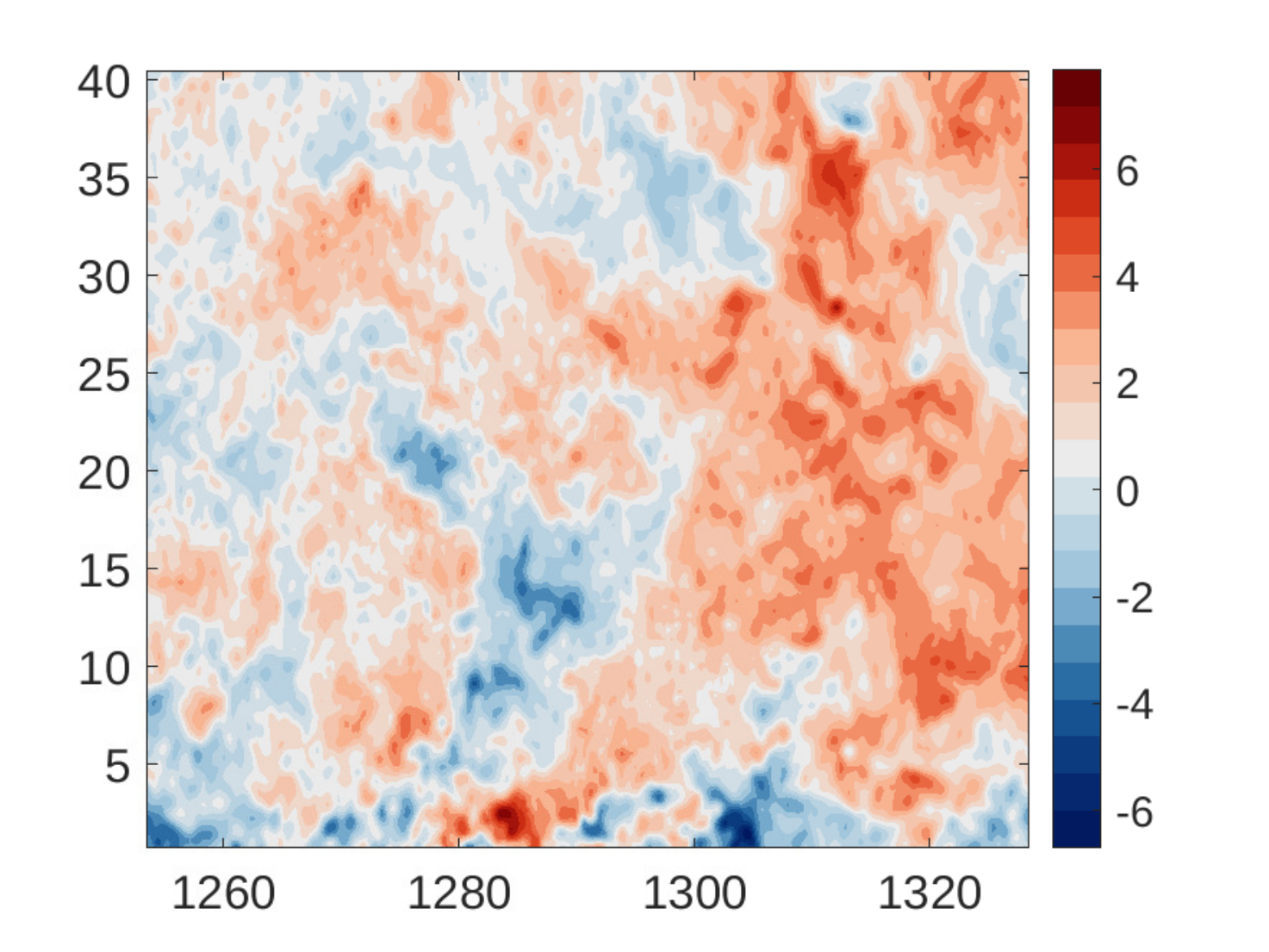}
		\put (-6,22) {\small\rotatebox{90}{$y$ (mm)}}
		\put (32,-4) {\small{$x$ (mm)}}
		\put(18,73) {\small{Example flow}}
   \end{overpic}\vspace{5mm}
 \end{minipage}
 \begin{minipage}[b]{0.24\textwidth}
  \begin{overpic}[width=\textwidth,trim={0mm 0mm 0mm 0mm},clip]{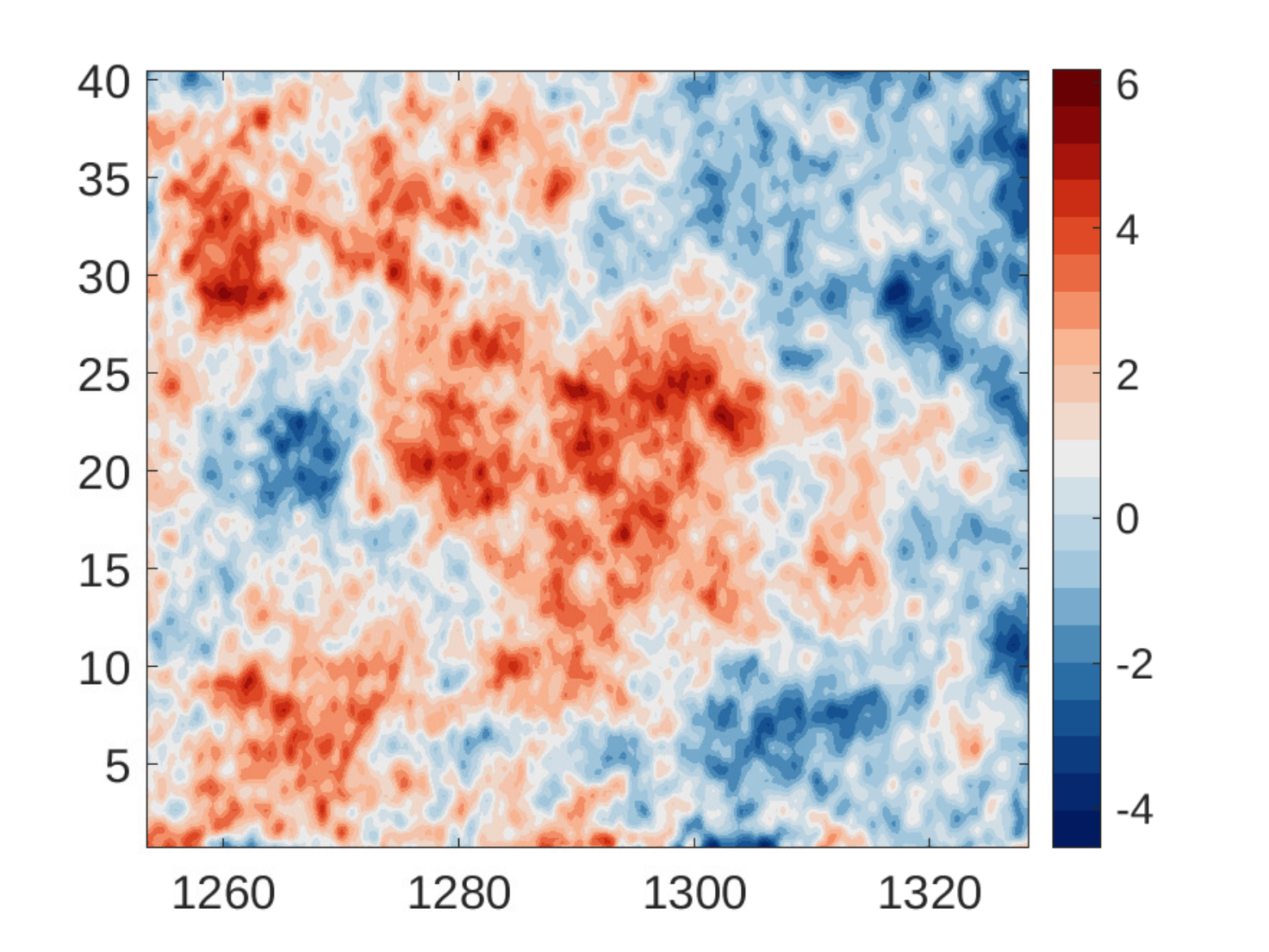}
	\put (-6,22) {\small\rotatebox{90}{$y$ (mm)}}
		\put (32,-4) {\small{$x$ (mm)}}
		\put(16,73) {\small{\texttt{mpEDMD} 4s pred.}}
   \end{overpic}\vspace{5mm}
 \end{minipage}
	\begin{minipage}[b]{0.24\textwidth}
  \begin{overpic}[width=\textwidth,trim={0mm 0mm 0mm 0mm},clip]{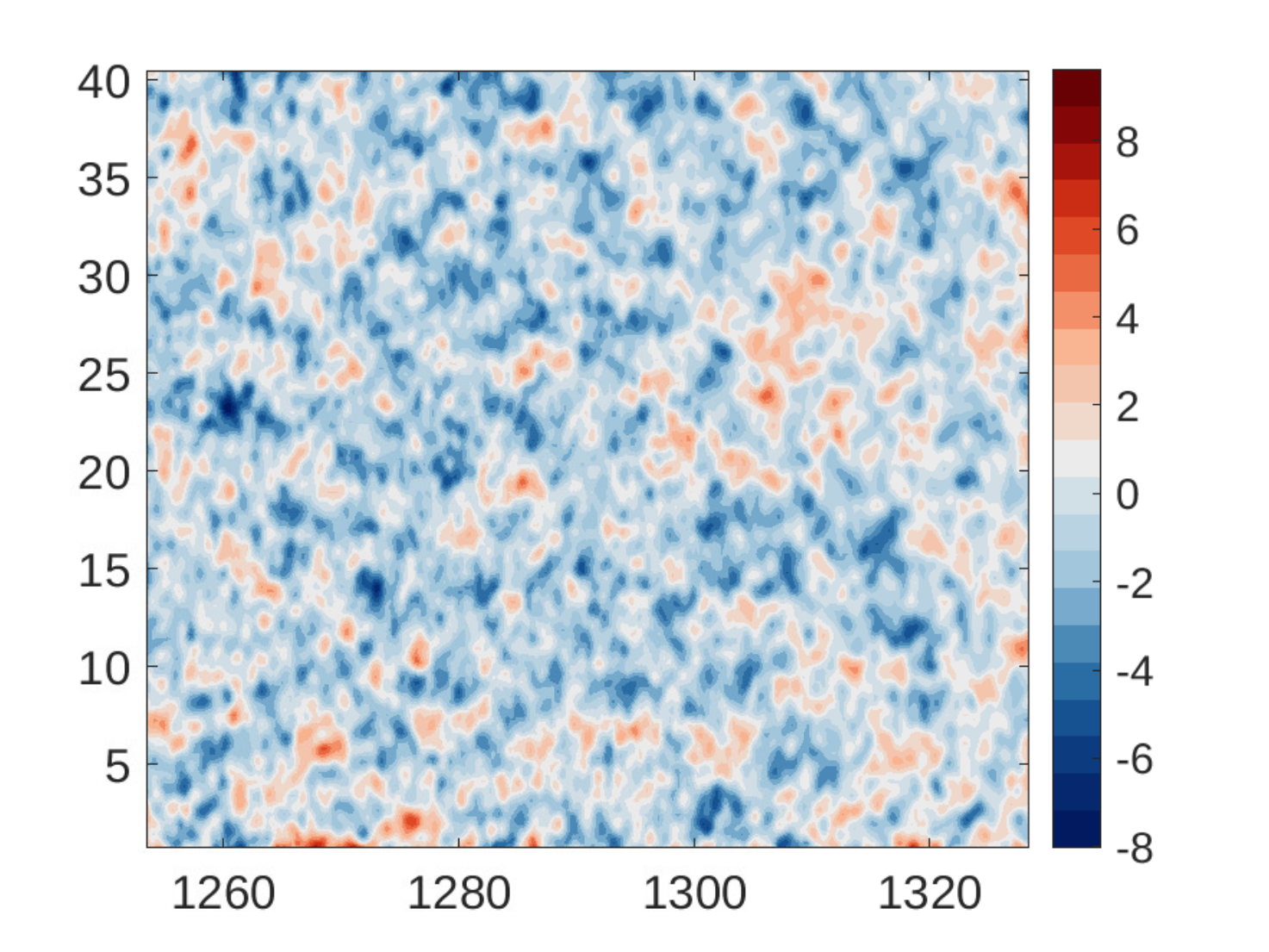}
		\put (-6,22) {\small\rotatebox{90}{$y$ (mm)}}
		\put (32,-4) {\small{$x$ (mm)}}
		\put(16,73) {\small{piDMD 4s pred.}}
   \end{overpic}\vspace{5mm}
 \end{minipage}
\begin{minipage}[b]{0.24\textwidth}
  \begin{overpic}[width=\textwidth,trim={0mm 0mm 0mm 0mm},clip]{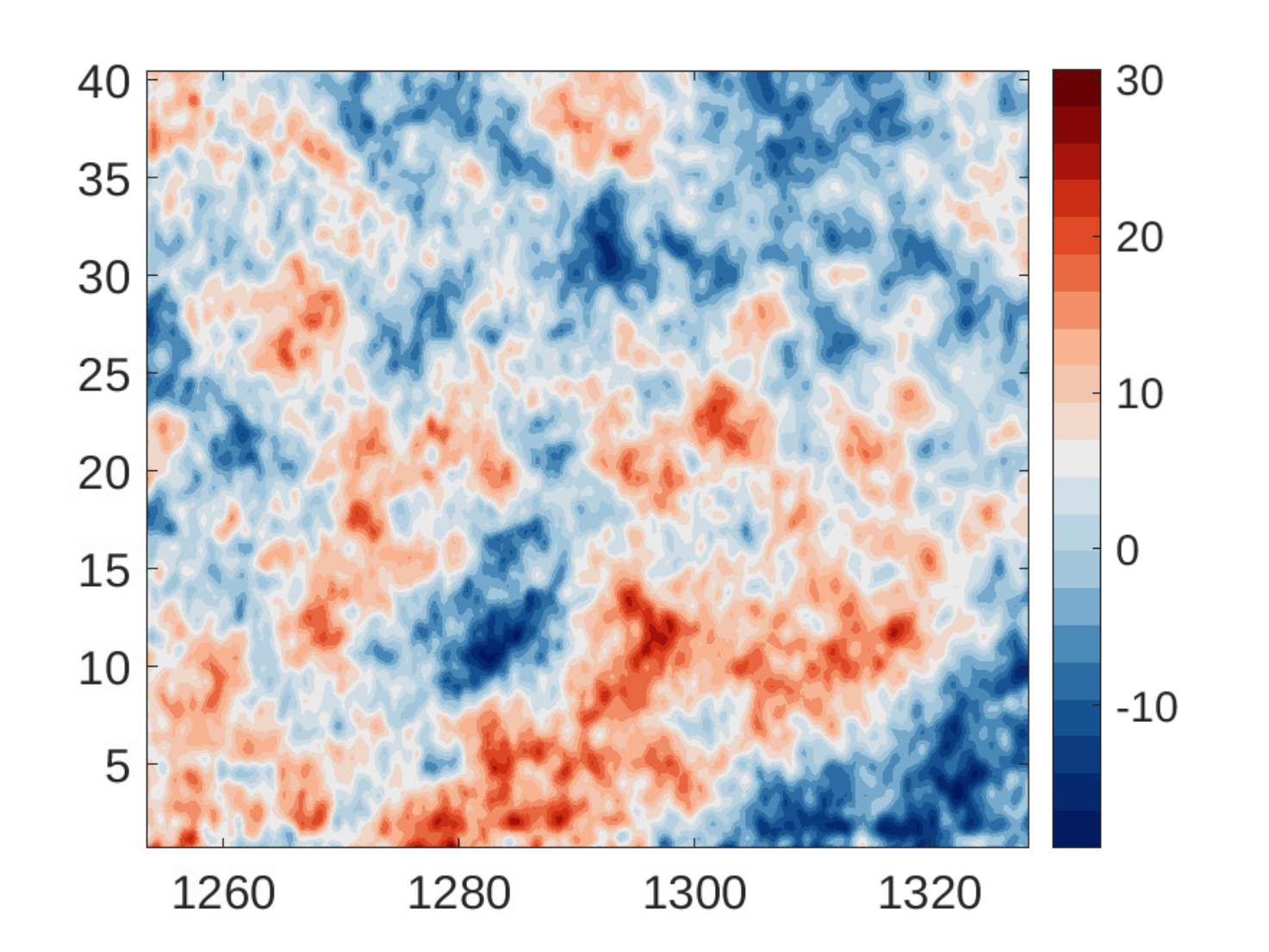}
	\put (-6,22) {\small\rotatebox{90}{$y$ (mm)}}
		\put (32,-4) {\small{$x$ (mm)}}
		\put(16,73) {\small{EDMD 4s pred.}}
   \end{overpic}\vspace{5mm}
 \end{minipage}

\begin{minipage}[b]{0.24\textwidth}
  \begin{overpic}[width=\textwidth,trim={0mm 0mm 0mm 0mm},clip]{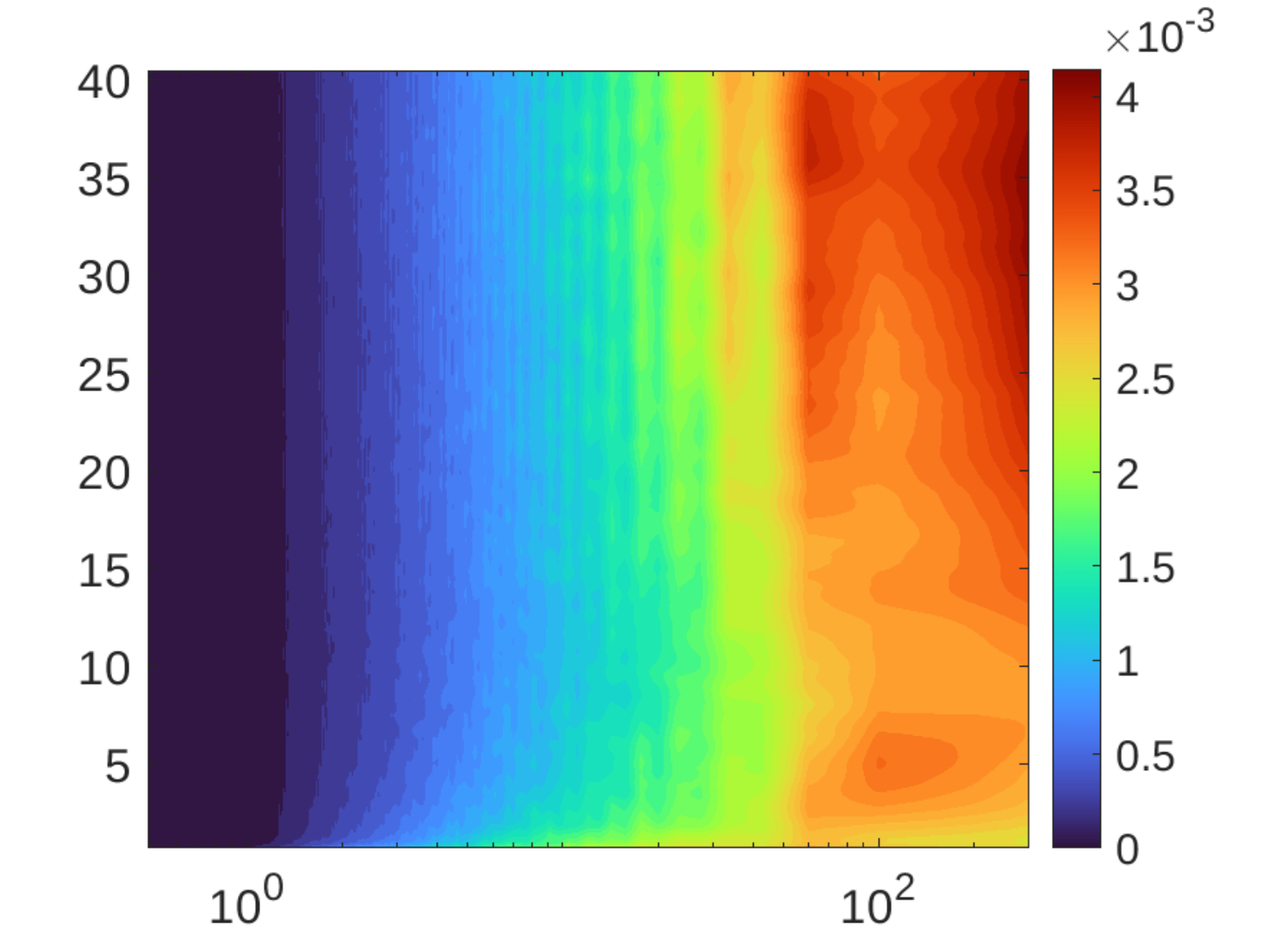}
		\put (-6,22) {\small\rotatebox{90}{$y$ (mm)}}
		\put (22,-4) {\small{wavenumber}}
		\put(30,73) {\small{Flow}}
   \end{overpic}
 \end{minipage}
 \begin{minipage}[b]{0.24\textwidth}
  \begin{overpic}[width=\textwidth,trim={0mm 0mm 0mm 0mm},clip]{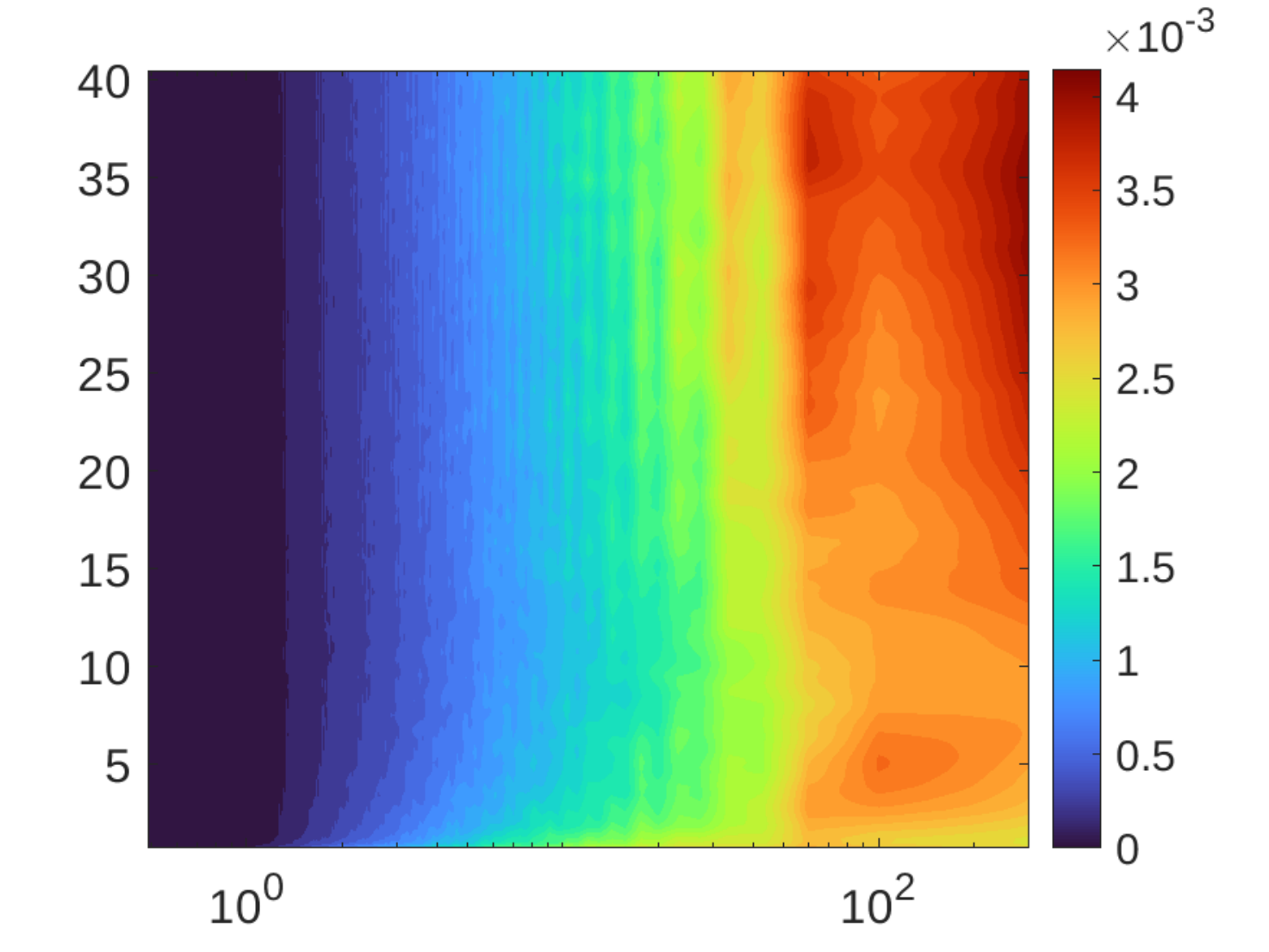}
		\put (-6,22) {\small\rotatebox{90}{$y$ (mm)}}
		\put (22,-4) {\small{wavenumber}}
		\put(30,73) {\small{\texttt{mpEDMD}}}
   \end{overpic}
 \end{minipage}
	\begin{minipage}[b]{0.24\textwidth}
  \begin{overpic}[width=\textwidth,trim={0mm 0mm 0mm 0mm},clip]{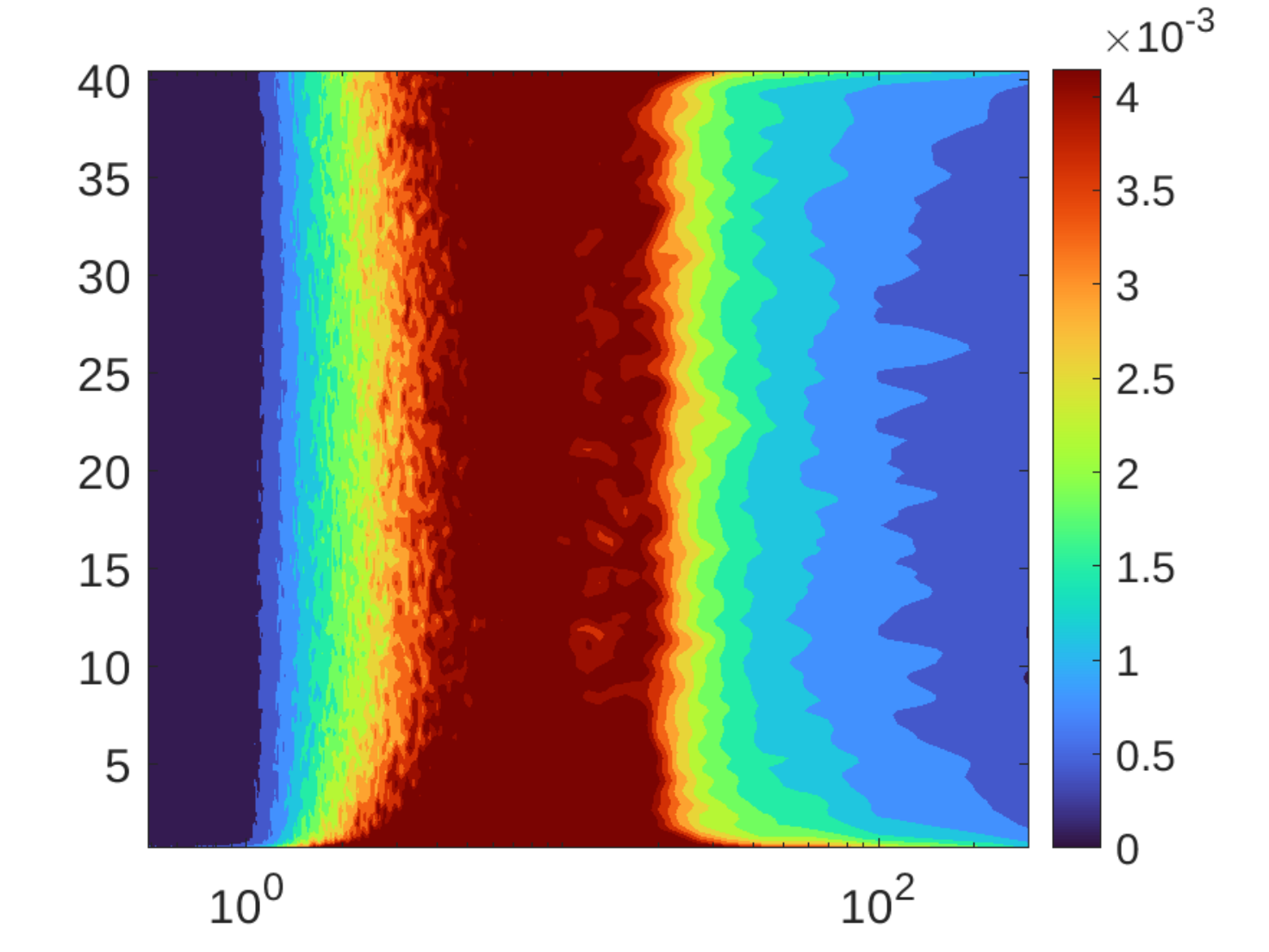}
\put (-6,22) {\small\rotatebox{90}{$y$ (mm)}}
		\put (22,-4) {\small{wavenumber}}
		\put(30,73) {\small{piDMD}}
   \end{overpic}
 \end{minipage}
\begin{minipage}[b]{0.24\textwidth}
  \begin{overpic}[width=\textwidth,trim={0mm 0mm 0mm 0mm},clip]{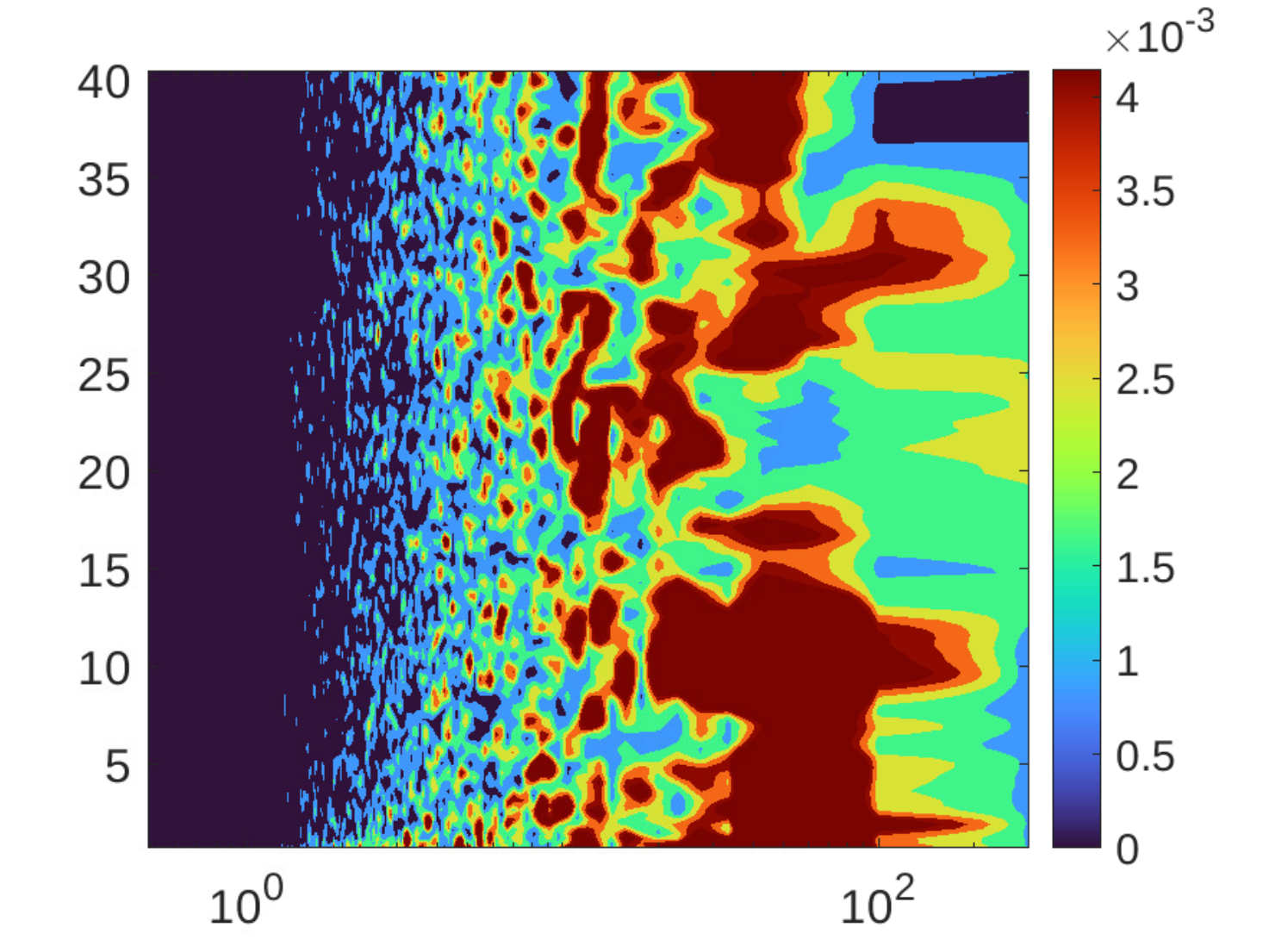}
\put (-6,22) {\small\rotatebox{90}{$y$ (mm)}}
		\put (22,-4) {\small{wavenumber}}
		\put(30,73) {\small{EDMD}}
   \end{overpic}
 \end{minipage}

\vspace{-2mm}
	  \caption{Top row: Horizontal velocity profiles predicted at 4s. Bottom row: Wavenumber spectra.}\vspace{-3mm}
\label{fig:turbulence2}
\end{figure}

%While the peak in the velocity profile is $y_m \approx$ 18mm from the wall in our case, the overall thickness of the wall-jet flow is on the order of 200mm. Clearly, the PIV experiments must compromise between a good spatial resolution or capturing the entire flow field. In our case, the FOV was not tall enough to capture the entire wall-jet flow field. For this reason, the standard DMD algorithm under-predicts the energies corresponding to the shear-layer portion of the wall-jet flow as the corresponding length scales fall outside of the limits of the FOV.kleinfelter2019development,

\section{Conclusion}
\label{sec:conclusion}

We formulated a structure-preserving data-driven approximation of Koopman operators for measure-preserving dynamical systems, \texttt{mpEDMD}, summarized in~\cref{alg:mp_EDMD}. We proved the convergence of \texttt{mpEDMD} to various infinite-dimensional spectral quantities of interest, summarized in~\cref{approx_table}. In particular, \texttt{mpEDMD} is the first truncation method whose eigendecomposition converges to these spectral quantities for general measure-preserving dynamical systems. We also proved the first results on convergence rates of the approximation in the size of the dictionary. As well as the convergence theory, our numerical examples show the increased robustness of \texttt{mpEDMD} to noise compared with other DMD-type methods, and the ability to capture energy conservation and statistics of a real-world turbulent boundary layer flow. These results open the door to future extensions to more general structure-preserving methods for Koopman operators and data-driven dynamical systems.\vspace{-2mm}

%\section*{Acknowledgments}
%We would like to acknowledge the assistance of volunteers in putting together this example manuscript and supplement.
\renewcommand{\baselinestretch}{0.95}
\bibliographystyle{siamplain}
\bibliography{koopman}
\end{document}